\newcommand{\evnrow}{\rowcolor[gray]{0.95}}
\newcommand{\C}{\mathbb{C}}
\newcommand{\Q}{\mathbb{Q}}
\newcommand{\mQ}{\mathcal{Q}}
\newcommand{\mF}{\mathcal{F}}
\newcommand{\PP}{\mathbb{P}}
\newcommand{\mE}{\mathcal{E}}
\newcommand{\mU}{\mathcal{U}}
\newcommand{\tZ}{\widetilde{Z}}
\newcommand{\mZ}{\mathscr{Z}}
\newcommand{\of}{\mathcal{O}}
\newcommand{\W}{\bigwedge}
\newcommand{\Schur}{\Sigma}
\newcommand{\tY}{\widetilde{Y}}
\newcommand{\oU}{\overline{\mathcal{U}}}
\newcommand{\contr}{\lrcorner}
\DeclareMathOperator{\Aut}{Aut}
\DeclareMathOperator{\Ann}{Ann}
\DeclareMathOperator{\Ext}{Ext}
\DeclareMathOperator{\GL}{GL}
\DeclareMathOperator{\Sym}{Sym}
\DeclareMathOperator{\ddim}{dim}
\DeclareMathOperator{\Gr}{Gr}
\DeclareMathOperator{\Fl}{Fl}
\DeclareMathOperator{\OGr}{OGr}
\DeclareMathOperator{\codim}{codim}
\DeclareMathOperator{\Bl}{Bl}
\DeclareMathOperator{\SL}{SL}
\DeclareMathOperator{\SO}{SO}
\DeclareMathOperator{\rank}{rank}
\newtheorem{thm}{Theorem}[section]
\newtheorem{corollary}[thm]{Corollary}
\newtheorem{lemma}[thm]{Lemma}
\newtheorem*{aim*}{Aim of this paper}
\theoremstyle{definition}
\newtheorem{rmk}[thm]{Remark}
\newtheorem{caveat}[thm]{Caveat}
\def\l@subsection{\@tocline{1}{0,2pt}{2pc}{8mm}{\ \ }} 
\def\l@section{\@tocline{1}{0,2pt}{2pc}{8mm}{\ \ }} 
\author{Lorenzo De Biase}
\address{School of Mathematics - Cardiff University \\ CF24 4AG, UK}
\email[L.~De Biase]{debiase.lorenzo7@gmail.com}
\author{Enrico Fatighenti}
\address{ Dipartimento di Matematica\\
Sapienza Universit\`a di Roma\\
  Piazzale Aldo Moro 5\\ 00185 Roma, Italy.}
\email[E.~Fatighenti]{enricofatighenti6@gmail.com}
\author{Fabio Tanturri}
\address{Dipartimento di Matematica \\
Universit\`a di Genova\\
Via Dodecaneso 35\\
16146 Genova, Italy}
\email[F.~Tanturri]{tanturri@dima.unige.it}
\subjclass[2020]{Primary 14J30, 14J45; Secondary 14M15, 14E30}
\title[Fano 3-folds from homogeneous vector bundles over Grassmannians]{Fano 3-folds from homogeneous vector bundles over Grassmannians}
\begin{document}
\begin{abstract}
We rework the Mori--Mukai classification of Fano 3-folds, by describing each of the 105 families via biregular models as zero loci of general global sections of homogeneous vector bundles over products of Grassmannians.
\end{abstract}
\maketitle
\section{Introduction}

\thispagestyle{empty}

The classification of Fano 3-folds is one of the most influential results in birational geometry. Out of the 105 families, 17 have Picard rank $\rho=1$. They are usually called \emph{prime}. Their classification was completed first by Iskovskikh \cite{isk}, using the birational technique of the \emph{double projection from a line}. The classification was reworked by Mukai \cite{mukai}, using the biregular \emph{vector bundle method}. Mukai was able to describe most of the prime Fano varieties as complete intersections in certain homogeneous or quasi-homogeneous varieties. The latter in turn can be embedded in Grassmannians as zero loci of sections of homogeneous vector bundles.

Mori and Mukai \cite{morimukai} classified as well the 88 remaining families of Fano 3-folds with Picard rank $\rho \geq 2$. However, the proof has little in common with the vector bundle strategy, relying on the powerful birational Mori's theory of extremal rays.

One of the aims of this paper is to rewrite the entire classification of 3-folds in a biregular fashion, finding models for the non-prime Fano 3-folds which are akin to the Mukai's vector bundle ones.
In particular, for each of the 105 Fano $X$ we will look for a suitable embedding $X \subset \prod \Gr(k_i, n_i)$, such that $X$ can be described as the zero locus of a general global section of a homogeneous vector bundle $\mathcal{F}$ over $\prod \Gr(k_i, n_i)$.

In \cite{corti} Coates, Corti, Galkin, and Kasprzyk carried out a similar program. In particular, they were able to write down each of the 105 Fano 3-folds as zero loci of sections of vector bundles over GIT quotients. In some cases, their \emph{key varieties} are products of Grassmannians, and we decided to adopt their models. However, in many cases, their model of choice is a complete intersection in a toric variety, which was particularly suitable for their purpose of computing the quantum periods, with the aim of using ideas from mirror symmetry for further classification results.

Our motivating purpose is instead to attack the classification of Fano varieties in higher dimension from a representation-theoretical angle. In \cite{kuchle} K\"uchle classified Fano 4-folds of index 1 that can be obtained from completely reducible, homogeneous vector bundles over a single Grassmannian $\Gr(k,n)$. The resulting 20 families are therefore a sort of higher dimensional analogue of the Mukai models for 3-folds obtained via the vector bundle method. One of the main advantages of K\"uchle's method is that it relies only on very simple combinatorial data as input, such as the weight of the representation corresponding to the bundle involved. Moreover, this description allows an efficient computation of the invariants of the Fano, such as the Hodge numbers, for example using in combination the Koszul complex and Borel--Weil--Bott Theorem on the ambient variety. Such methods can be easily automatised via computer algebra, and extended to the case of products of Grassmannians $\prod \Gr(k_i, n_i)$, to say the least. This is exactly what we did. This paper originated from the construction of 3-folds via these methods; in a series of subsequent projects, we plan to work on more classification-type results, in dimension 4 and above.

As an initial benchmark for our strategy, we wanted to check how many of the 105 3-folds could be described using our methods. We found out that all 105 of them are. Although we do not believe that the same will be true in dimension 4 and higher, we hope to be able to find out many new and interesting examples of non-prime Fano 4-folds.

\subsection*{Main results}

The results of the paper are partially summarised in the following theorem. In what follows and throughout the whole paper, the notation $\mZ(\mF) \subset G$ will denote the zero locus of a general global section of the vector bundle $\mF$ in the variety $G$.
\begin{thm}\label{mainthm}
Let $X$ be a general smooth Fano 3-fold. Then there exist an ambient variety $G=\prod \Gr(k_i,n_i)$, product of (possibly weighted) Grassmannians, and a homogeneous vector bundle $\mathcal{F}$ on $G$ such that $X= \mZ(\mF) \subset G.$
\end{thm}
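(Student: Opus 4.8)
The statement is a classification result, so the plan is to verify it family by family against the Mori--Mukai list, exhibiting an explicit model for each of the $105$ families rather than arguing uniformly. For every family I would propose a candidate ambient space $G = \prod \Gr(k_i,n_i)$ together with a homogeneous vector bundle $\mF$, and then check three things: that the generic zero locus $\mZ(\mF)$ is smooth of dimension $3$; that it is Fano; and that its numerical invariants coincide with those recorded by Mori and Mukai for the intended family, so that the classification identifies $\mZ(\mF)$ with that family. The Fano condition is the most mechanical of the three: by adjunction $K_X = (K_G + \det \mF)|_X$, so it reduces to a positivity computation for a line bundle on $G$, which is purely combinatorial once the weights defining $\mF$ are fixed.

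For the $17$ prime ($\rho = 1$) families I would start from Mukai's vector bundle models, rewriting his complete intersections in homogeneous and quasi-homogeneous varieties as zero loci over a single (possibly weighted) Grassmannian; several of these are essentially immediate, and the extra flexibility of \emph{weighted} Grassmannians is what lets us capture the remaining prime cases in a uniform language. The genuinely new work is in the $88$ families with $\rho \ge 2$. Here I would draw on two sources of candidate models: the key varieties of Coates, Corti, Galkin and Kasprzyk that already happen to be products of Grassmannians, which we adopt directly; and, for the rest, a systematic search over small products $\prod \Gr(k_i,n_i)$ and over homogeneous bundles of bounded rank, guided by the constraint that the codimension equal $\rank \mF$ and that the computed invariants match the target.

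The computational engine throughout is the combination of the Koszul complex resolving $\cO_{\mZ(\mF)}$ on $G$ with the Borel--Weil--Bott theorem, which together let us compute $H^\bullet(X, \cO_X)$, the Hodge numbers, and the anticanonical degree purely from the representation-theoretic data defining $\mF$; this is precisely the step that automatises and that produces the invariants to be matched against the tables. I expect the main obstacle to be twofold. First, the search itself: for the non-prime families there is no a priori reason that a product-of-Grassmannians model should exist, so part of the content of the theorem is the empirical, case-by-case discovery that one always does, together with the matching of invariants fine enough to single out the correct family via the classification. Second, and more delicate technically, is the verification of smoothness of the generic section: when $\mF$ is globally generated a Bertini-type argument suffices, but over weighted Grassmannians, or when $\mF$ carries summands that are not globally generated, genericity alone does not guarantee smoothness, and one must argue by hand --- typically through an explicit analysis of the Jacobian, or by producing a single smooth member --- family by family.
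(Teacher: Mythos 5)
Your overall architecture (case-by-case treatment of the 105 families, Mukai's models for the prime cases, adopting the Coates--Corti--Galkin--Kasprzyk key varieties when they are products of Grassmannians, Koszul complex plus Borel--Weil--Bott for the invariants) is the same as the paper's, but the step you rely on to identify each candidate zero locus with its intended family would fail: matching numerical invariants against the Mori--Mukai list does not single out the family. The invariants your computation produces --- $\rho = h^{1,1}$, $(-K)^3$ (equivalently $h^0(-K)$, by Riemann--Roch on a Fano 3-fold), and $h^{1,2}$ --- do not separate several families of the same Picard rank: in the paper's own table, 3--17 and 3--18 both have invariants $(21,36,0)$; 3--19, 3--20 and 3--21 all have $(22,38,0)$; 4--4 and 4--5 both have $(19,32,0)$; 5--2 and 5--3 both have $(21,36,0)$. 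Mori and Mukai distinguish such families by their birational structure (the types and centres of the extremal contractions), not by these numbers, so ``the classification identifies $\mZ(\mF)$ with that family'' is exactly the assertion that still has to be proved, and it cannot be outsourced to an invariant check. This is why the paper's proof is organised around a toolbox of identification lemmas --- Lemma \ref{lem:blow}, Lemma \ref{lem:blowInGrass}, Lemma \ref{lem:blowup}, Corollary \ref{cor:cayleycrit}, Lemma \ref{lem:blowDegeneracyLocus}, Remark \ref{lem:doublecovers} --- which exhibit each zero locus \emph{biregularly} as the blow up, double cover, or projective bundle prescribed by the Mori--Mukai description (see, e.g., the treatments of 2--16, 3--19, 3--20, 3--21); the Hodge-number computations of Section \ref{computeinvariants} serve only as a consistency check, and in a few cases (3--5, 4--13) as a deformation-theoretic argument for generality in moduli, not as the identification itself.

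A secondary gap: a search over homogeneous bundles given by purely combinatorial weight data would not reach all 105 families. For 20 of them the paper needs homogeneous bundles that are non-split extensions, such as $\Lambda \in \Ext^1(\Sym^2\mQ,\mQ(-1))$, built from the principal-parts sequences of Remark \ref{rem:principalParts} precisely in order to realise projective bundles like $\PP(\of(-2)\oplus\of)$ inside a product of projective spaces; and for 5 families one summand of $\mF$ has no global sections on the ambient space at all, acquiring them only on the zero locus of the other summands (Caveat \ref{caveatBundle}), so a general section of the direct sum is not even a meaningful search datum there. These bundles are not found by enumerating completely reducible representations and matching invariants; they are dictated by the birational description one is trying to realise, which again points back to the identification lemmas as the essential missing ingredient in your plan.
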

The only Fano varieties requiring weighted Grassmannians (actually, a unique weighted projective space) in their description without any alternative description are 1--11, 2--1, and 10--1, the others involving only classical Grassmannians. The weighted projective space in question is $\PP(1^3,2,3)$. The Fano 1--11 is a section of $\of(6)$ on the latter, 2--1 is a blow up of 1--11 and 10--1 is a linear section (multiplied with a $\PP^1$). Notice that for the Fano 1--11 (which was present in this form in Mukai's classification as well), $-K_X$ is not very ample. A few other weighted projective spaces appear, but for all of them we provide alternative descriptions.

In the statement of Theorem \ref{mainthm}, general means that a general Fano 3-fold $X$ in the corresponding family admits such a description. No hypothesis on the vector bundle $\mF$ is specified; our \emph{gold standard} for a homogeneous vector bundle $\mF$ is to be completely reducible and globally generated. Bundles with these properties are particularly suitable when facing classification problems. 
For 85 out of the 105 families, we managed to find a vector bundle of this form; for the remaining ones, we used homogeneous bundles which are extensions of some other homogeneous completely reducible ones, so that the description is slightly more complicated but still well within our range of techniques. Out of these 20 families, for 5 of them the vector bundle is particular: it
is of the form $\mF= \mF' \oplus \mathcal{G}$ where $\mathcal{G}$ is a line bundle with no global sections on the total space, but with sections on $\mZ(\mF')$. This happens when we need to blow up along a subvariety involving an exceptional divisor coming from a previous blow up. We deal with this phenomenon in Caveat \ref{caveatBundle}.

We partially collect these refinements in the following theorem.
\begin{thm} \label{thm:refined} Let $X$ be a Fano as in Theorem \ref{mainthm}. Then
\begin{itemize}
    \item For 102/105 families of Fano there exists a description without weighted factors in $G$.
    \item For 85/105 families of Fano there exists a description such that the bundle $\mF$ is completely reducible.
\end{itemize}
\end{thm}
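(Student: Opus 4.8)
The statement is really a classification result, so the plan is to proceed family by family through the Mori--Mukai list, treating the known numerical invariants (the Picard rank $\rho$, the Hodge numbers, and the anticanonical degree $-K_X^3$) together with the classical birational description of each $X$ (typically as a blow-up, a conic bundle, a product, or a projective bundle) as the target data we must reproduce. For a fixed family I would first \emph{propose} a candidate ambient $G=\prod\Gr(k_i,n_i)$ and a homogeneous bundle $\mF$ on it, guided by representation theory: the rank of $\mF$ must equal $\dim G-3$, and the weights of its irreducible summands are chosen so that adjunction $-K_{\mZ(\mF)}=\bigl(-K_G-\det\mF\bigr)\big|_{\mZ(\mF)}$ yields an ample class of the correct degree.

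The verification toolkit is uniform. Genericity of the section, via Bertini's theorem for globally generated bundles, guarantees that $\mZ(\mF)$ is smooth of the expected dimension $3$; one checks global generation summand by summand using Borel--Weil--Bott on $G$. To pin down the invariants I would twist the Koszul complex resolving $\cO_{\mZ(\mF)}$ by the relevant line bundles and push the Borel--Weil--Bott computation on $G$ through the resulting hypercohomology spectral sequence; this computes the $h^{p,q}(X)$ and $-K_X^3$ purely from the combinatorial weight data, and is readily automated. Matching these outputs against the Mori--Mukai tables then identifies the candidate family.

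The delicate point, and the main obstacle, is upgrading \emph{equality of invariants} to an actual biregular isomorphism, since distinct families occasionally share numerical data. Here I would exploit the explicit geometry of the construction rather than the numbers alone: when the classical description of $X$ is a blow-up $\Bl_Z Y$, I would arrange $G$ and $\mF$ so that the two projections of $\mZ(\mF)\subset\prod\Gr(k_i,n_i)$ realise exactly the contraction $X\to Y$ and the exceptional structure over $Z$, thereby exhibiting the isomorphism geometrically; products and projective bundles are handled by the analogous factorisation of $G$. This reduces the identification to recognising a known incidence correspondence, which is robust even when invariants coincide.

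Finally, the two refined counts are read off from the constructions. For the first bullet one shows that, apart from the three families 1--11, 2--1, and 10--1 forced onto the weighted space $\PP(1^3,2,3)$ by the failure of $-K_X$ to be very ample for 1--11, every remaining family admits the above construction on a product of \emph{classical} Grassmannians, so $102/105$ succeed; one must additionally verify that each family where a weighted factor appears \emph{a priori} also carries an alternative classical model. For the second bullet, a completely reducible $\mF$ suffices whenever the blow-up centres are cut out by sections of globally generated summands; the $20$ exceptions are precisely the blow-ups whose centre meets, or lies on, an exceptional divisor produced by an earlier blow-up, forcing the use of an extension of completely reducible bundles (and, in the $5$ cases of Caveat~\ref{caveatBundle}, a summand $\mathcal{G}$ with no sections on $G$ but sections on $\mZ(\mF')$). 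Listing these $20$ and exhibiting completely reducible models for the other $85$ gives the stated count.
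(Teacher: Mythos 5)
Your overall plan --- a family-by-family construction keyed to the Mori--Mukai birational description, with smoothness from Bertini, invariants from Koszul complexes plus Borel--Weil--Bott used as checks, and the actual identification carried out geometrically by making the projections of $\mZ(\mF)$ realise the contraction $X \to Y$ and the exceptional structure --- is essentially the paper's proof, which consists of the case-by-case constructions of Section \ref{Fano3folds} (using the toolbox of Section \ref{identifications}) summarised in the tables of Section \ref{tables}.

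There is, however, a genuine error in how you justify the second count, and it is not cosmetic, because the content of the theorem \emph{is} the count. You assert that the $20$ families without a completely reducible model are ``precisely the blow-ups whose centre meets, or lies on, an exceptional divisor produced by an earlier blow-up.'' That description fits only the $5$ families of Caveat \ref{caveatBundle} (3--29, 4--11, 4--12, 5--1, 5--2). The other $15$ exceptions have nothing to do with exceptional divisors, and several are not blow-ups at all: 2--2, 2--8, 2--18 and 3--1 are double covers, 2--36 is the projective bundle $\PP_{\PP^2}(\of \oplus \of(-2))$, 3--2 is a divisor in a projective bundle over $\PP^1 \times \PP^1$, and the remaining ones (2--28, 3--4, 3--5, 3--9, 3--14, 3--21, 3--22, 4--5, 4--13) are blow-ups along centres disjoint from, or unrelated to, any exceptional locus. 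The mechanism actually forcing extensions is different: each of these models needs a projective bundle $\PP(E)$ in which $E$ contains a line-bundle summand of degree $\leq -2$ in some factor, coming either from the double-cover construction of Remark \ref{lem:doublecovers} (which requires $\PP(\of \oplus L^{\vee})$ with $L$ half the branch class) or from Lemma \ref{lem:blowDegeneracyLocus} applied to a complete intersection whose two degrees differ by at least $2$; embedding such an $E$ as the kernel of a surjection from a trivial bundle forces the quotient to be an iterated extension of completely reducible bundles (the snake-lemma/principal-parts constructions of Remark \ref{rem:principalParts} and Lemmas \ref{projBundle1-12}, \ref{projBundle2-2}), never a completely reducible one. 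Under your characterisation the second count would come out as $100/105$ rather than $85/105$, so the final step ``listing these $20$ \dots gives the stated count'' fails. Relatedly, your identification toolkit covers blow-ups, products and projective bundles but omits double covers entirely; that trick is indispensable both for several of the $20$ exceptions above and for producing the unweighted models of 1--1 and 1--12 that the first count ($102/105$) relies on.
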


The two theorems are proven in Section \ref{Fano3folds}, which we devote to the construction of the aforementioned families, except for those which are already known in the literature. We collect all the models in Section \ref{tables}; we include models for Del Pezzo surfaces as well. All models are general in moduli.

We draw the reader's attention to Section \ref{identifications} as well. This is mainly a collection of technical lemmas and results, and we believe that most of them are well-known to experts. 
Nonetheless, some of them are of independent interest, as they provide a dictionary between zero loci of sections of vector bundles and birational geometry. They were quite useful for translating Mori--Mukai models into our descriptions, and we believe that they can and will be useful for higher dimensional analyses. In this line of thought, we also present a few results involving flag varieties, even if they play only a small role in what follows.

\subsection*{Our models}

Mori--Mukai characterisation of the 88 non-prime 3-folds often involves intricate birational descriptions. The typical situation consists in blowing up a simpler 3-fold along a curve. Whenever the curve is a complete intersection in the base 3-fold, finding a suitable model in a product of Grassmannians is almost algorithmic; when the curve is not, then we perform a delicate analysis to understand how the curve can be cut in the ambient Fano. Subsequently, Lemma \ref{lem:blowup}, Corollary \ref{cor:cayleycrit}, and Lemma \ref{lem:blowDegeneracyLocus} allow us to describe the resulting 3-fold as a complete intersection in a suitable projective bundle. We then need to describe the latter as a zero locus of some vector bundle over a product of Grassmannians. In many cases, this is a straightforward procedure and the proof takes few lines. However, some projective bundles turn out to be particularly tricky, and we have to deal with them case-by-case. 

For other Fano we need to blow up a variety along a subvariety of codimension at least 3. To handle these cases, we collect and develop a few results which allow us to characterise these blow ups in term of zero loci of sections.

We want to give here an introductory example of a Fano 3-fold whose description is not immediate, yet admits a quite simple description in our model. We compare the original Mori--Mukai approach and the Coates--Corti--Galkin--Kasprzyk one with ours.

Let us consider the Fano of rank 2, number 16 in the Mori--Mukai list. Following the notation which will be adopted in our paper, we will call it 2--16.

\begin{description}[leftmargin=0pt]
\item[2--16, Mori--Mukai] Blow up of the complete intersection of two quadrics in $\PP^5$ in a conic $C$. Notice that $C$ is not a complete intersection in the ambient variety $\Q_1 \cap \Q_2 \subset \PP^5$.
\item[2--16, Coates--Corti--Galkin--Kasprzyk] A codimension-2 complete intersection
$\mZ(L+M, 2M) \subset F$ where $F$ has weight data
\[
\begin{array}{rrrrrrrl} 
	\multicolumn{1}{c}{s_0} & 
	\multicolumn{1}{c}{s_1} & 
	\multicolumn{1}{c}{s_2} & 
	\multicolumn{1}{c}{x} & 
	\multicolumn{1}{c}{x_3} & 
	\multicolumn{1}{c}{x_4} & 
	\multicolumn{1}{c}{x_5} & \\ 
	\cmidrule{1-7}
	1 & 1 & 1 & -1 & 0 & 0  & 0& \hspace{1.5ex} L\\ 
	0 & 0 & 0 & 1 & 1 & 1 & 1 & \hspace{1.5ex} M \\
\end{array}
\]
Equivalently, we can reformulate these data in terms of rays and cones. A way to do that is provided, e.g., in \cite{BelmansFatighentiTanturri}, and yields
{\small
\[
R = \{(-1,-1,-1,-1,-1), (0,0,0,0,1), (0,0,0,1,0), (0,0,1,0,0), (0,1,0,0,0), (1,0,0,0,0), (1,1,1,0,0)\},
\]
\vspace{-25pt}
\begin{multline*}
C = \{(1,2,3,5,6), (1,2,4,5,6), (0,2,3,5,6), (0,1,3,4,6), (0,1,2,3,5), (0,2,3,4,6), \\ (0,2,4,5,6), (0,1,3,5,6), (0,1,4,5,6), (0,1,2,4,5), (0,1,2,3,4), (1,2,3,4,6)\}.
\end{multline*}
}

\vspace{-12pt} \noindent The Fano variety is then the complete intersection of the two torus-invariant divisors $(0,0,0,0,0,2,1)$ and $(0,0,0,0,0,2,2)$, where for every divisor the $i$-th entry in the list corresponds to the coefficient of the $i$-th irreducible torus-invariant divisor.
\end{description}

Finally, our description realises this Fano as the zero locus of a general section of a globally generated homogeneous vector bundle over a (non-toric) product of Grassmannians.
\begin{description}[leftmargin=0pt]
\item[2--16, our description] The zero locus \[\mZ(\mU^{\vee}_{\Gr(2,4)}(1,0) \oplus \of(0,2)) \subset \PP^2 \times \Gr(2,4),\] where $\mU$ is the rank 2 tautological subbundle.
\end{description}

Our construction methods often allow for multiple models. For instance, the above Fano 2--16 can be realised as well as \[ \mZ(\of(1,0) \oplus \of(0,2)) \subset \Fl(1,2,4). \]
In order to preserve the compactness of this paper we usually decided to present only one model for each Fano variety, with notable exceptions whenever we found an alternative description too elegant not to include it, or whenever they were important intermediate steps in the identification of the model. Our choice of model depends on our personal taste. The criterion for an $X= \mZ(\mF) \subset \prod \Gr(k_i, n_i)$ was to pick the model with either the smallest number of factors or with the rank of $\mF$ as low as possible. To mention an example in lower dimension, the Del Pezzo surface of degree 5 can be equivalently described as $\mZ(\of(1,0,0,0,1)\oplus \of(0,1,0,0,1) \oplus \of(0,0,1,0,1) \oplus \of(0,0,0,1,1)) \subset (\PP^1)^4 \times \PP^2 $ or as $\mZ(\of(1)^{\oplus 4}) \subset \Gr(2,5)$. We will prefer the latter description to the former. 

\subsection*{Further directions}
\label{futureDirections}

As mentioned in the first part of the introduction, our methods are built with the explicit intention of being applied in higher dimension. Over a single Grassmannian $\Gr(k,n)$ homogeneous, completely reducible vector bundles can be written as direct sums of $\Sigma_{\alpha} \mQ \otimes \Sigma_{\beta}\mU$, where $\Sigma_{\alpha}$ (resp.\ $\Sigma_{\beta}$) denotes the Schur functor indexed by the non-increasing sequence $\alpha$ (resp.\ $\beta$); a similar expression holds for flag varieties and their products. This makes to some extent possible a methodical search for varieties which are zero loci of sections of bundles of this form.

What we plan to do in a series of subsequent works is to classify all Fano in dimension 4 that can be obtained in this way, comparing our results with the already existing known classes of Fano 4-folds (\cite{batyrev, coates, kalashnikov}, to cite a few). We are confident that many new and interesting examples can be found in this way, and the results of this paper are for sure strong motivations. We are particularly interested in the case of 4-folds of index 1 with Picard rank as high as possible and which are not a product. The \emph{champion} at the moment is the Fano of Picard rank 9 constructed by Casagrande, Codogni, and Fanelli in \cite{casagrande}; see, e.g., \cite{CasagrandeSurvey} for a survey of results on the topic.

Another case of interest are Fano varieties in higher dimension with special Hodge-theoretical properties. In particular, Fano varieties in any dimension of K3 type (in the sense of \cite{eg2}) have recently been studied due to their possible links with hyperk\"ahler manifolds. Finally, we remark that zero loci are particular cases of degeneracy loci of morphisms between vector bundles. It is certainly possible to further extend the above program to this framework, which has already been explored from many points of view (see, e.g., \cite{TanturriHilbert}), or even to the so-called orbital degeneracy loci, a recently introduced wider class of varieties \cite{BFMT2,BFMT}.

\subsection*{Plan of the paper} 
Section \ref{identifications} is where we establish our toolbox, and state or prove several lemmas, useful to translate the Mori--Mukai birational language into our biregular one, and vice versa. Section \ref{computeinvariants} is devoted to explaining how we are able to compute the invariants for all the models we present. Section \ref{Fano3folds} is the core of the paper. A detailed description of all the families which are not provided in the literature is given. Section \ref{tables} contains the tables and recap all the results in a schematic and handy fashion.

\subsection*{Notation and conventions}Throughout the whole paper, the notation $\mZ(\mF) \subset X$ denotes the zero locus of a general global section of the vector bundle $\mF$ in the variety $X$. We will denote by $X_d$ a general hypersurface of degree $d$ inside $X$.

If $E$ is a rank $r$ vector bundle over a variety $X$, we denote by $\PP_X(E)$ (or simply by $\PP(E)$ when no confusion can arise) the projective bundle $\pi:\mathrm{Proj}(\Sym E^{\vee}) \rightarrow X$; we remark that we adopt the subspace notation, as in \cite[Chapter 9]{EisenbudHarris3264}. If we denote by $\of_{\PP(E)}(1)$ (or simply $\of(1)$) the relatively ample line bundle, this yields $H^0(\PP(E),\of_{\PP(E)}(1)) \cong H^0(X, E^{\vee})$. Moreover $
\omega_{\PP(E)} \cong \of_{\PP(E)}(-r) \otimes \pi^*(\omega_X \otimes \det(E^{\vee}))
$
and, for any line bundle $L$, the isomorphism $\PP(E)\cong \PP(E\otimes L)$ induces $\of_{\PP (E)}(1) \otimes L^\vee = \of_{\PP(E\otimes L)}(1)$.

For products of varieties $X_1 \times X_2$, the expression $\mF_1 \boxtimes \mF_2$ will denote the tensor product between the pullbacks of $\mF_i$ via the natural projections. For products of Grassmannians $\Gr(k_1,n_1) \times \Gr(k_2,n_2)$, we will almost always adopt the short form $\of(a,b):=\of(a) \boxtimes \of(b)$; we will often omit the pullbacks when no confusion can arise, so that, e.g., $\mQ_{\Gr(k_1,n_1)}(1,2)=\mQ_{\Gr(k_1,n_1)}(1) \boxtimes \of_{\Gr(k_2,n_2)}(2)$. 

By $\Fl(k_1, \ldots, k_r, n)$ we will denote the flag variety of subspaces $V_{k_1} \subset V_{k_2} \subset \ldots \subset V_{k_r} \subset \C^n$. We will denote by $\pi_i$ the projection to the $i$-th Grassmannian $\Gr(k_i,n)$. $\mU_i$ and $\mQ_i$ will denote the pullback of the tautological subbundle and quotient bundle via $\pi_i$, of rank $k_i, n-k_i$ respectively. For short, we will write $\of(a,b)=\pi_1^*(\of(a)) \otimes \pi_2^*(\of(b))$. In the rare cases where a flag is involved in a product of varieties, the different Picard groups will be separated by a semicolon, i.e., $\of(a,b;c)=\of(a,b) \boxtimes \of(c)$ on $\Fl(k_1,k_2,n) \times \Gr(k',n')$.

Many data for Table \ref{tab:3folds} (and for the paper overall) are taken from \cite{fanography}. They rely on the tables from \cite{isp5,pcs,kps,cfst}. Many other alternative descriptions are taken from \cite{corti}. We include the relevant citation to the alternative description in the table whenever appropriate. The notation X--Y for a Fano means a Fano of Picard rank $X$ which is the number $Y$ in the Mori--Mukai list.
Finally, $\mathbb{Q}_3$ denotes the 3-dimensional quadric hypersurface (Fano 1--16) and $\mathbb{V}_5$ denotes the index 2, degree 5 linear section of $\Gr(2,5)$ (Fano 1--15).
\subsection*{Acknowledgements} 
We are indebted to Daniele Faenzi for many enlightening suggestions. Thanks to Vladimiro Benedetti, Marcello Bernardara, Giovanni Mongardi, and Miles Reid for useful discussions. We also thank the referees for their useful comments. EF and FT were partially supported by a ``Research in Paris'' grant held at Institut Henri Poincar\'e. We thank the institute for the excellent working conditions. We acknowledge the Laboratoire Paul Painlev\'e -- Universit\'e de Lille, the Dipartimento di Matematica ``Giuseppe Peano'' -- Universit\`a di Torino and INdAM for partial support as well. All three authors are members of INdAM-GNSAGA.

\section{Identifications}
\label{identifications}

Most of the Fano 3-folds with Picard rank $\rho\geq 2$ arise as blow up of other Fano 3-folds with centre in distinguished subvarieties. Sometimes other standard birational descriptions are involved. The purpose of this subsection is therefore to establish a toolbox that allow us to translate the Mori--Mukai birational language into models suitable for our type of descriptions. Most of the lemmas appearing in this section are probably well-known to the experts: however for some of them we have not been able to locate clear proofs in the literature. 

The most basic result is the description of the blow up of a projective space in a linear subspace. We will use the following lemma:


\begin{lemma} \label{lem:blow}
Let $\mQ$ be the tautological quotient bundle on $ \PP^{n-r}$. We have
\[
\Bl_{\PP^{r-1}}\PP^n \cong \mZ(\mQ_{\PP^{n-r}}(0,1)) \subset \PP^{n-r} \times \PP^n.
\]

\begin{proof}
Let $V$ be a $n+1$-dimensional vector space such that $\PP^n \cong \PP(V)$. By \cite[Proposition 9.11]{EisenbudHarris3264},
$\Bl_{\PP^{r-1}}\PP^n$ is isomorphic to the projectivization of the vector bundle
\[
E = \of_{\PP^{n-r}}(-1) \oplus (V' \otimes \of_{\PP^{n-r}}),
\]
where $V' \subset V$ has dimension $r$ and $\PP^{n-r}$ is identified with $\PP(V/V')$. The bundle $E$ fits into the short exact sequence
\[
0 \rightarrow E \rightarrow (V/V' \oplus V') \otimes \of_{\PP^{n-r}} \rightarrow \mQ_{\PP^{n-r}} \rightarrow 0,
\]
hence $\PP(E)$ can be also expressed as the zero locus of $\mQ \boxtimes \of(1)$ inside $\PP^{n-r} \times \PP(V/V' \oplus V') \cong \PP^{n-r} \times \PP^n$, as claimed.
\end{proof}
\end{lemma}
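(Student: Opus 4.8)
The plan is to identify the general zero locus directly through the second projection $q\colon \PP^{n-r}\times\PP^n \to \PP^n$ and to recognize it as the blow-up. Write $\PP^n = \PP(V)$ with $\dim V = n+1$, fix an $r$-dimensional subspace $V'\subset V$ so that $\PP^{r-1}=\PP(V')$ is the centre and $\PP^{n-r}=\PP(V/V')$, and use the Euler sequence on $\PP(V/V')$ together with the K\"unneth formula to compute
\[
H^0\bigl(\PP^{n-r}\times\PP^n,\ \mQ_{\PP^{n-r}}(0,1)\bigr)\cong H^0(\PP^{n-r},\mQ)\otimes H^0(\PP^n,\of(1))\cong \Hom(V, V/V').
\]
Thus a section corresponds to a linear map $\phi\colon V\to V/V'$, and unravelling the definitions its value at a point $([\bar w],[v])$ is the class of $\phi(v)$ in $(V/V')/\langle\bar w\rangle$ (up to the $\of(1)$ twist). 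The zero locus is therefore the set of pairs $([\bar w],[v])$ with $\phi(v)\in\langle\bar w\rangle$.

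For a general $\phi$ the map is surjective with $r$-dimensional kernel $K$, and I would read off the structure of this locus via $q$. Over a point $[v]$ with $\phi(v)\neq0$ the first coordinate is forced to be $[\phi(v)]$, so $q$ is an isomorphism there; over $[v]\in\PP(K)=\PP^{r-1}$ the condition is vacuous and the fibre is the whole $\PP^{n-r}$. Hence $q$ restricts to a birational morphism that is an isomorphism away from $\PP^{r-1}$ and contracts a divisor isomorphic to $\PP^{r-1}\times\PP^{n-r}$ onto $\PP^{r-1}$; since this divisor is exactly the projectivization of the normal bundle $\of_{\PP^{r-1}}(1)^{\oplus(n-r+1)}$, the universal property of the blow-up identifies the zero locus with $\Bl_{\PP^{r-1}}\PP^n$. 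As all $\PP^{r-1}\subset\PP^n$ are projectively equivalent, the particular choice of $K$ is immaterial.

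Equivalently, and more in line with the cited projective-bundle picture, one exhibits the short exact sequence $0\to E\to V\otimes\of_{\PP^{n-r}}\to\mQ_{\PP^{n-r}}\to0$ with $E=\of_{\PP^{n-r}}(-1)\oplus(V'\otimes\of_{\PP^{n-r}})$; since $W:=V\otimes\of_{\PP^{n-r}}$ is trivial, $\PP(W)=\PP^{n-r}\times\PP^n$, and composing the tautological inclusion $\of_{\PP(W)}(-1)\hookrightarrow p^*W$ with $p^*W\to p^*\mQ_{\PP^{n-r}}$ yields a section of $p^*\mQ_{\PP^{n-r}}\otimes\of_{\PP(W)}(1)=\mQ_{\PP^{n-r}}(0,1)$ vanishing precisely on $\PP(E)$. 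I expect the only real subtlety to be bookkeeping rather than substance: one must keep the subspace convention for $\PP(E)$ consistent, check that the cokernel of $E\hookrightarrow W$ is genuinely $\mQ_{\PP^{n-r}}$ and not a twist of it, and confirm that the distinguished section is general by verifying that its zero locus is smooth of the expected dimension $n$ (equivalently, that $\mQ_{\PP^{n-r}}(0,1)$, which is globally generated of rank $n-r$, meets the dimension count). None of these presents a genuine difficulty.
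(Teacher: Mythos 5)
Your proposal is correct, and your primary argument takes a genuinely different route from the paper's. The paper's proof is essentially your second paragraph: it invokes \cite[Proposition 9.11]{EisenbudHarris3264} to identify $\Bl_{\PP^{r-1}}\PP^n$ with $\PP(E)$ for $E=\of_{\PP^{n-r}}(-1)\oplus(V'\otimes\of_{\PP^{n-r}})$, and then uses the short exact sequence $0\to E\to V\otimes\of\to\mQ\to 0$, together with the general principle that a subbundle $E$ of a trivial bundle realizes $\PP(E)$ as the zero locus of a section of $\mQ\boxtimes\of(1)$ on $\PP^{n-r}\times\PP^n$. Your first argument instead computes $H^0(\mQ_{\PP^{n-r}}(0,1))\cong\Hom(V,V/V')$ and analyzes the zero locus of a general (hence surjective) $\phi$ fiberwise over $\PP^n$. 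This buys two things: it is self-contained (no appeal to the projective-bundle description of the blow-up), and it works directly with an arbitrary general section, whereas the paper exhibits one distinguished section and leaves implicit that the general section behaves the same way (your remark that all choices of $\ker\phi$ are projectively equivalent is exactly the point needed there). The paper's route, in exchange, is shorter and reuses a principle applied many times later in the paper.

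One step you should tighten: asserting that ``the universal property of the blow-up identifies the zero locus with $\Bl_{\PP^{r-1}}\PP^n$'' because the contracted divisor equals $\PP(N_{\PP^{r-1}/\PP^n})$ is not by itself a proof. The universal property requires the scheme-theoretic inverse image ideal of the centre to be invertible on $Z(\phi)$, and even then it only yields a morphism $Z(\phi)\to\Bl_{\PP^{r-1}}\PP^n$ over $\PP^n$, which must then be shown to be an isomorphism. Both points are easy here but need saying. Locally, writing $\phi(x)=a(x)\bar w_0+b(x)$ near a point of the contracted divisor, the equations of $Z(\phi)$ read $b_i(x)-a(x)y_i=0$, so on $Z(\phi)$ the pulled-back ideal $(a,b_1,\dotsc,b_{n-r})$ collapses to the principal ideal $(a)$; this gives invertibility (and smoothness of $Z(\phi)$), and the induced map to the blow-up is then an isomorphism since it is proper, birational, and contracts no divisor. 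More elegantly, your description $Z(\phi)=\{([\bar w],[v]):\phi(v)\in\langle\bar w\rangle\}$ exhibits the zero locus as the closure of the graph of the linear projection $\PP(V)\dashrightarrow\PP(V/V')$ centred at $\PP(\ker\phi)$, which is the classical model of the blow-up of a linear subspace; a dimension count on the locus over $\PP(\ker\phi)$ rules out extra components, so no appeal to the universal property is needed at all.
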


In the above lemma we used the fact that, as soon as we have a short exact sequence on $X$ of vector bundles
$0 \rightarrow
E \rightarrow
F \rightarrow
G \rightarrow
0,$
then $\PP(E)$ can be obtained as the zero locus of a section of $t$ of $\pi^*(G) \otimes \of_{\PP(F)}(1)$ over $\pi:\PP(F)\rightarrow X$. If $H^1(E)=0$, then $t$ can be chosen to be general; a particularly favourable situation will occur when $F\cong \of^{\oplus r}$, so that $\PP(E)$ embeds into $X \times \PP^{r-1}$.

Lemma \ref{lem:blow} can be generalised for the Grassmannians context.
\begin{lemma}\label{lem:blowInGrass}
We have
\[
\Bl_{\Gr(k-1, n-1)}\Gr(k,n) \cong \mZ(\mQ \boxtimes \mU^{\vee}) \subset \Gr(k,n-1) \times \Gr(k,n),
\]
where the centre of the blow up $\Gr(k-1, n-1)$ is identified with $\mZ(\mQ)\subset \Gr(k, n)$.
\begin{proof}
Let $V_n, V_{n-1}$ be complex vector spaces of dimension $n, n-1$ respectively. A section of $\mQ \boxtimes \mU^{\vee}$ over $\Gr(k,V_{n-1}) \times \Gr(k,V_n)$ can be regarded as a section of $\mU^{\vee} \boxtimes \mU^{\vee}$ over $\Gr(n-k-1,V_{n-1}^\vee) \times \Gr(k,V_n)$, and the corresponding zero loci are canonically isomorphic.

A section of the latter vector bundle is of the form
\[
s=f_1 x_1 + \ldots + f_{n-1} x_{n-1}
\]
for some $f_i \in V_{n-1}$, $x_i \in V_n^\vee$. Let us fix bases for $V_{n-1}^{\vee}, V_n$ accordingly. Up to the action of $\GL_k$, a point in $\Gr(k,V_n)$ is represented by a $k \times n$ matrix
\[
A=\left( \begin{array}{ccc}
a_{1,1} & \cdots & a_{1,n}\\
\vdots & & \vdots \\
a_{k,1} & \cdots & a_{k,n}
\end{array}
\right)
\]
and a section $x_i \in V_n^\vee$ evaluates as $x_i (A)=(a_{1,i} \cdots a_{k,i})^t$. Analogously, a section $f_i \in V_n$ evaluates on a point $B \in \Gr(n-k-1,V_{n-1}^\vee)$, seen as a $n-k-1 \times n-1$ matrix, as $f_i(B)=(b_{1,i} \cdots b_{n-k-1,i})^t$.

The evaluation of a section $f_i x_i$ on a point $(A,B)$ is given by the $k \times n-k-1$ matrix $x_i(A) \cdot (f_i(B))^t$. It is straightforward to check that
\[
s(A,B) = 0 \qquad \mbox{if and only if} \qquad A \cdot \left( 
\begin{array}{c}
B^t \\
\hline 0 \cdots 0
\end{array} \right)=0.
\]

Let $Y$ be the zero locus of $s$. We want to study the fibres of the (restriction of the) natural projection $Y \rightarrow \Gr(k,V_n)$. This amounts to solving a linear system with $b_{1,1}, b_{1,2}, \ldots, b_{1,n-1}, b_{2,1}, \ldots, b_{n-k-1,n-1}$ as variables. With this choice of coordinates, the matrix associated to the linear system is the $(n-1)(n-k-1) \times k(n-k-1)$ matrix
\[
\left(
\begin{array}{cccc}
\tilde{A} \\
& \tilde{A} \\
& & \ddots \\
& & & \tilde{A}
\end{array}
\right), \qquad \mbox{where } A=\left(
\begin{array}{c|c}
\tilde{A} & \begin{array}{c}
a_{1,n}\\
\vdots\\
a_{k,n}
\end{array}
\end{array}
\right).
\]
The fiber over a general point $A$, i.e., whenever $\tilde{A}$ has maximal rank, is a single point $\in \Gr(n-k-1,V_{n-1}^\vee)$, hence $Y \rightarrow \Gr(k,V_n)$ is birational. The fiber over $A$ is positive-dimensional if and only if $\tilde{A}$ has rank at most $k-1$, i.e., if and only if
\[
\rank \left(
\begin{array}{c}
A \\
\hline 0 \cdots 0 \, 1
\end{array}
\right)<k+1.
\]
Up to choosing the last element of a basis of $V_n$, we may assume that a general section of $\mQ$ over $\Gr(k,V_n)$ is $x_n$, hence the locus in $\Gr(k,V_n)$ where the map is not birational is precisely $\mZ(\mQ) \cong \Gr(k-1, n-1)$. Further degenerations would occur over points $A$ with $\tilde{A}$ having rank at most $k-2$, which is not possible for a point in $\Gr(k, V_n)$.
\end{proof}
\end{lemma}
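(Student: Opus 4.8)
The plan is to exhibit the zero locus $Y := \mZ(\mQ \boxtimes \mU^{\vee}) \subset \Gr(k,n-1) \times \Gr(k,n)$ as the blow up by studying the second projection $\pi \colon Y \to \Gr(k,n)$ directly, showing it is birational and identifying its exceptional locus with the centre of the blow up. Before doing so, I would reduce the bundle to a more symmetric form. A section of $\mQ \boxtimes \mU^{\vee}$ on $\Gr(k,V_{n-1}) \times \Gr(k,V_n)$ may be reinterpreted, via the standard duality $\Gr(k,V_{n-1}) \cong \Gr(n-k-1,V_{n-1}^\vee)$ under which $\mQ$ becomes $\mU^\vee$, as a section of $\mU^{\vee} \boxtimes \mU^{\vee}$ over $\Gr(n-k-1,V_{n-1}^\vee) \times \Gr(k,V_n)$; the two zero loci are canonically isomorphic. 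This symmetric form is easier to handle because a section is then a tensor $s = \sum_i f_i \otimes x_i$ with $f_i \in V_{n-1}$, $x_i \in V_n^\vee$, and its vanishing can be written as a single bilinear/matrix condition.

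The key computational step is to make the vanishing condition explicit in coordinates. Representing a point of $\Gr(k,V_n)$ by a $k \times n$ matrix $A$ (up to $\GL_k$) and a point of $\Gr(n-k-1,V_{n-1}^\vee)$ by a $(n-k-1)\times(n-1)$ matrix $B$, I would evaluate $x_i$ and $f_i$ at these points and check that $s(A,B)=0$ is equivalent to the matrix equation $A \cdot \left(\begin{smallmatrix} B^t \\ 0 \cdots 0 \end{smallmatrix}\right) = 0$, where the bottom row pads $B^t$ from $n-1$ to $n$ rows. This reduces the problem to linear algebra, after which the fibres of $\pi$ over a point $A$ are exactly the solution spaces of a linear system in the entries of $B$.

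The heart of the proof is then a rank analysis of this linear system. Writing $A = (\tilde A \mid \ast)$ where $\tilde A$ consists of the first $n-1$ columns, the system matrix is block-diagonal with $\tilde A$ repeated, so the fibre dimension is governed entirely by $\rank \tilde A$. For generic $A$ (equivalently $\tilde A$ of maximal rank $k$) the fibre is a single point, so $\pi$ is birational; the fibre is positive-dimensional precisely when $\rank \tilde A \leq k-1$, which I would rephrase intrinsically as the condition $\rank \left(\begin{smallmatrix} A \\ 0 \cdots 0\, 1\end{smallmatrix}\right) < k+1$. Choosing coordinates so that a general section of $\mQ$ on $\Gr(k,V_n)$ is the coordinate $x_n$, this degeneracy locus is exactly $\mZ(\mQ) \cong \Gr(k-1,n-1)$, giving the asserted identification of the centre.

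The main obstacle I anticipate is twofold. First, getting the duality step right: one must verify that the canonical isomorphism $\Gr(k,V_{n-1}) \cong \Gr(n-k-1,V_{n-1}^\vee)$ really carries $\mQ$ to $\mU^\vee$ so that the reformulated zero locus is \emph{canonically} isomorphic to the original, not merely abstractly so. Second, and more delicate, is confirming that $\pi$ is genuinely the blow up map and not just a birational morphism with the right exceptional set --- that is, that $Y$ is smooth and the exceptional fibres have the expected projective-space structure over $\Gr(k-1,n-1)$. I would address this by checking that over each point of the centre the fibre is a projectivised normal-space, and by ruling out deeper degenerations: the stratum where $\rank \tilde A \leq k-2$ would be needed for worse behaviour, but this is impossible for a genuine point of $\Gr(k,V_n)$ since its $k \times n$ matrix has rank $k$. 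This last observation is what guarantees $\pi$ has no fibres of excess dimension and pins down the birational structure as a blow up.
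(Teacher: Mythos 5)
Your proposal follows essentially the same route as the paper's own proof: the duality reduction from $\mQ \boxtimes \mU^{\vee}$ to $\mU^{\vee} \boxtimes \mU^{\vee}$, the coordinate evaluation of the section on matrix representatives $(A,B)$ leading to the same matrix equation, the block-diagonal rank analysis of the resulting linear system in the entries of $B$, and the exclusion of deeper degenerations via $\rank \tilde{A} \geq k-1$ for points of $\Gr(k,V_n)$. Your closing concern about verifying that the birational morphism is genuinely the blow up (rather than merely having the correct exceptional set) is a fair one, and is in fact treated at the same level of detail in the paper, which likewise rests on the fibre-dimension analysis.
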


Geometrically, the proof of the previous lemma can be interpreted in terms of the graph of the rational map $\xymatrix{\Gr(k, V_n) \ar@{-->}[r] & \Gr(k, V_{n-1})}$ induced by the projection from the point $x_n^\vee$.

Sometimes it is convenient to interpret blow ups of projective spaces in linear subspaces as projective bundles. The following remark is very useful in this context.
\begin{rmk}[{\cite{wisniewski1991fano}}] \label{rmk:wisniewski}
The following identifications hold.
\begin{itemize}
    \item Let $n$ be an odd number. $\PP_{\PP^{\frac{n+1}{2}}}(T_{\PP^{\frac{n+1}{2}}}) \cong X_{(1,1)} \subset \PP^{\frac{n+1}{2}} \times \PP^{\frac{n+1}{2}}.$
    \item $\Bl_{\PP^{r-1}}\PP^{n} \cong \PP_{\PP^{n-r}}(  \of(-1) \oplus \of^{\oplus r}).$
\end{itemize}

\end{rmk}
 
For similar projective bundles we might not have a neat description as blow ups. However these projective bundles appear frequently, especially in the toric cases. It is then convenient to describe them as zero loci of vector bundles. 
\begin{lemma} \label{lem:pxp} On $\PP^m \times \PP^n$, for any $k,h \in \mathbb{N}$ such that $n=k(m+1)+h$ we have
\[
\mZ(\mQ_{\PP^m}(0,1)^{\oplus k}) \cong \PP(\of_{\PP^m}^{\oplus k}(-1) \oplus \of_{\PP^m}^{\oplus h+1}).
\]
\begin{proof}
It easily follows from the short exact sequence on $\PP^m$
\[
0 \rightarrow
\of(-1)^{\oplus k} \oplus \of^{\oplus h+1} \rightarrow
\of^{\oplus n+1} \rightarrow
\mQ^{\oplus k} \rightarrow
0,
\]
which is obtained by adding $k$ Euler sequences and the trivial sequence $\of^{\oplus h+1} \rightarrow \of^{\oplus h+1}$.
\end{proof}
\end{lemma}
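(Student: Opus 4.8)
The plan is to apply the general projective-bundle principle recorded immediately after Lemma \ref{lem:blow}: given a short exact sequence $0 \to E \to F \to G \to 0$ of vector bundles on a base $X$, the projectivization $\PP(E)$ is cut out inside $\pi\colon \PP(F) \to X$ by a section of $\pi^*(G) \otimes \of_{\PP(F)}(1)$, and in the favourable case where $F$ is trivial of rank $r$ one has $\PP(F) \cong X \times \PP^{r-1}$. The entire content of the lemma is therefore to exhibit, on $X = \PP^m$, a short exact sequence whose sub-bundle is $E = \of(-1)^{\oplus k} \oplus \of^{\oplus h+1}$, whose middle term is trivial, and whose quotient is $G = \mQ^{\oplus k}$.

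First I would build this sequence from the Euler sequence $0 \to \of(-1) \to \of^{\oplus m+1} \to \mQ \to 0$ on $\PP^m$. Taking the direct sum of $k$ copies of it together with the trivial sequence in which $\of^{\oplus h+1}$ maps isomorphically to itself yields
\[
0 \to \of(-1)^{\oplus k} \oplus \of^{\oplus h+1} \to \of^{\oplus k(m+1)+h+1} \to \mQ^{\oplus k} \to 0.
\]
The numerical hypothesis $n = k(m+1)+h$ is exactly what makes the middle term $\of^{\oplus n+1}$, so that $\PP(F) \cong \PP^m \times \PP^n$. This is the only place where the arithmetic of $k$ and $h$ enters.

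It then remains to identify the twisting bundle. Under $\PP(\of^{\oplus n+1}) \cong \PP^m \times \PP^n$ the relatively ample $\of_{\PP(F)}(1)$ is the pullback $\of(0,1)$ of the hyperplane class on the second factor — one checks this against the subspace convention via $H^0(\PP(F),\of_{\PP(F)}(1)) \cong H^0(\PP^m, F^\vee) \cong H^0(\PP^m\times\PP^n, \of(0,1))$, which also fixes the sign so that $\of(0,1)$ appears rather than its dual — while $\pi^*(G) = \mQ^{\oplus k}$ is pulled back from the first factor. Hence $\pi^*(G) \otimes \of_{\PP(F)}(1) \cong \mQ_{\PP^m}(0,1)^{\oplus k}$, and the principle gives $\PP(E) \cong \mZ(\mQ_{\PP^m}(0,1)^{\oplus k})$; the dimensions agree, since both sides have dimension $m+k+h$.

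Finally, to guarantee that the zero locus of a \emph{general} section realizes $\PP(E)$ — so that the notation $\mZ(-)$ is legitimate — I would verify $H^1(\PP^m, E) = 0$. Since $E$ is a direct sum of copies of $\of(-1)$ and $\of$, and both $H^1(\PP^m, \of(-1))$ and $H^1(\PP^m, \of)$ vanish for every $m \geq 1$, this holds automatically. I do not expect any genuine obstacle: the only steps requiring care are keeping the rank bookkeeping straight so that the middle term comes out trivial of rank exactly $n+1$, and respecting the subspace convention when matching $\of_{\PP(F)}(1)$ with $\of(0,1)$.
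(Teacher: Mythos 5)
Your proposal is correct and follows exactly the paper's own route: the paper's proof consists precisely of the short exact sequence obtained by summing $k$ Euler sequences with the trivial sequence $\of^{\oplus h+1} \rightarrow \of^{\oplus h+1}$, combined with the projective-bundle principle stated after Lemma \ref{lem:blow}. Your write-up merely makes explicit the rank bookkeeping, the identification of the twisting line bundle, and the $H^1$ vanishing that the paper leaves implicit.
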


Finally, we tackle the case of flag varieties. Sometimes our models for some Fano 3-folds are easily identified as sections of very simple bundles (e.g., linear sections) on flag varieties. It is important to be able to identify subvarieties of flag varieties with appropriate subvarieties in the product of Grassmannians. As a first step we prove the following lemma.

\begin{lemma}\label{lem:identificationsOnFlags} Let $\Fl(k_1, k_2, n)$ be a two-step flag. We have the following identifications:
\begin{gather*} \mZ(\mQ_2) \subset \Fl(k_1, k_2, n)\cong \Gr_{\Gr(k_2-1, n-1)}(k_1, \overline{\mU} \oplus \of); \\  \mZ(\mU_1^{\vee}) \subset \Fl(k_1, k_2, n)\cong \Gr_{\Gr(k_1, n-1)}(k_2-k_1, \overline{\mQ}(-1) \oplus \of(-1)), 
\end{gather*}
where $\overline{\mU}$ (resp., $\overline{\mQ}$) denotes the tautological subbundle on $\Gr(k_2-1, n-1)$ (resp., the tautological quotient bundle on $\Gr(k_1, n-1)$) and $\Gr(k,E)$ denotes the Grassmann bundle and $\mQ_2$ denotes the pullback of the quotient bundle on $\Gr(k_2, n)$ to the flag $\Fl(k_1, k_2, n)$.
\end{lemma}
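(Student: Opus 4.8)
The plan is to realise both identifications by presenting the two-step flag as a Grassmann bundle over each of its two Grassmannian factors and then restricting to the locus cut out by the relevant tautological section. Write $\pi_1,\pi_2$ for the projections of $\Fl(k_1,k_2,n)$ onto $\Gr(k_1,n)$ and $\Gr(k_2,n)$. The map $\pi_2$ exhibits the flag as the Grassmann bundle $\Gr_{\Gr(k_2,n)}(k_1,\mU_2)$, whose fibre over $[V_{k_2}]$ is the variety of $k_1$-planes $V_{k_1}\subset V_{k_2}$; dually, $\pi_1$ exhibits it as $\Gr_{\Gr(k_1,n)}(k_2-k_1,\mQ_1)$, whose fibre over $[V_{k_1}]$ is the variety of $(k_2-k_1)$-planes in $\C^n/V_{k_1}$. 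These are the standard bundle descriptions of a two-step flag.

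For the first identification I would use that $\mQ_2=\pi_2^*\mQ$ is pulled back from $\Gr(k_2,n)$, so that $\mZ(\mQ_2)$ is simply the restriction of the bundle $\Gr_{\Gr(k_2,n)}(k_1,\mU_2)$ over the sublocus $\mZ(\mQ)\subset\Gr(k_2,n)$. A general section of $\mQ$ is a vector $v\in\C^n$, and $\mZ(\mQ)=\{[V_{k_2}]:v\in V_{k_2}\}\cong\Gr(k_2-1,n-1)$ --- precisely the centre appearing in Lemma~\ref{lem:blowInGrass} --- via $V_{k_2}\mapsto V_{k_2}/\langle v\rangle$. It then remains to compute the restriction of $\mU_2$ to this sub-Grassmannian: its fibre is $V_{k_2}$, so it sits in $0\to\of\to\mU_2|\to\overline{\mU}\to0$, with $\of=\langle v\rangle$ the constant sub-line-bundle and $\overline{\mU}$ the tautological subbundle of $\Gr(k_2-1,n-1)$. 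The one genuine point is that this sequence splits, which follows from $\Ext^1(\overline{\mU},\of)=H^1(\Gr(k_2-1,n-1),\overline{\mU}^\vee)=0$ (Borel--Weil--Bott); hence $\mU_2|\cong\overline{\mU}\oplus\of$ and $\mZ(\mQ_2)\cong\Gr_{\Gr(k_2-1,n-1)}(k_1,\overline{\mU}\oplus\of)$. Here the relative tautological subbundle is exactly $\mU_1$, so no twist is needed.

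The second identification runs in parallel through $\pi_1$. Now $\mU_1^\vee=\pi_1^*\mU^\vee$ is pulled back from $\Gr(k_1,n)$; a general section is a covector $\phi$, and $\mZ(\mU^\vee)=\{[V_{k_1}]:V_{k_1}\subset\ker\phi\}\cong\Gr(k_1,n-1)$ with $\ker\phi\cong\C^{n-1}$. Restricting $\Gr_{\Gr(k_1,n)}(k_2-k_1,\mQ_1)$ over this locus, I would identify $\mQ_1|$, whose fibre is $\C^n/V_{k_1}$: it sits in $0\to\overline{\mQ}\to\mQ_1|\to\of\to0$, with $\overline{\mQ}$ the tautological quotient of $\Gr(k_1,n-1)$ and trivial quotient $\C^n/\ker\phi$. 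This splits again, since $\Ext^1(\of,\overline{\mQ})=H^1(\Gr(k_1,n-1),\overline{\mQ})=0$, giving $\mQ_1|\cong\overline{\mQ}\oplus\of$ and $\mZ(\mU_1^\vee)\cong\Gr_{\Gr(k_1,n-1)}(k_2-k_1,\overline{\mQ}\oplus\of)$. The stated twist by $\of(-1)$ leaves the variety unchanged, by the Grassmann-bundle analogue of the relation $\PP(E)\cong\PP(E\otimes L)$ recalled in the conventions, and is included only to normalise the relative tautological data; one fixes it by a short determinant bookkeeping. I expect the main obstacle to be not the (immediate) bijection of points but the verification of these two splittings and the correct choice of twist, so that the isomorphisms are compatible with the natural polarisations used elsewhere in the paper.
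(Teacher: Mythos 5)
Your proposal is correct and follows essentially the same route as the paper's proof: present $\Fl(k_1,k_2,n)$ as a Grassmann bundle over each Grassmannian factor, observe that $\mQ_2$ (resp.\ $\mU_1^\vee$) is a pullback so its zero locus is the restriction of that bundle over $\mZ(\mQ)\cong\Gr(k_2-1,n-1)$ (resp.\ $\mZ(\mU^\vee)\cong\Gr(k_1,n-1)$), and then identify the restricted tautological bundle. The only differences are that you justify the splittings $\mU_2|\cong\overline{\mU}\oplus\of$ and $\mQ_1|\cong\overline{\mQ}\oplus\of$ via $\Ext^1$-vanishing (which the paper merely asserts) and work with the untwisted quotient bundle before invoking twist-invariance of Grassmann bundles, whereas the paper starts from the presentation $\Gr_{\Gr(k_1,n)}(k_2-k_1,\mQ(-1))$ directly.
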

\begin{proof}
Recall that we can interpret the two-step flag $\Fl(k_1, k_2,n)$  as 
\[
\Fl(k_1, k_2,n) \cong\Gr_{\Gr(k_2, n)}(k_1, \mU) \cong \Gr_{\Gr(k_1, n)}(k_2-k_1, \mQ(-1)).
\]
The zero locus $\mZ(\mQ) \subset \Gr(k_2,n)$ is isomorphic to $\Gr(k_2-1, n-1)$.
Under this isomorphism we have 
\[
\mU|_{\mZ(\mQ)} \cong  \overline{\mU} \oplus \of.
\]

Similarly the zero locus $\mZ(\mU^{\vee}) \subset \Gr(k_1,n)$ is isomorphic to $\Gr(k_1, n-1)$. Under this isomorphism we have \[ \mQ(-1)|_{\mZ(\mU^{\vee})} \cong \overline{\mQ}(-1) \oplus \of(-1).\]
The result then follows.
\end{proof}
The above lemma has a particularly simple formulation on $\Fl(1,2,n)$, which is worth to make explicit for future references. Similar results can be obtained on $\Fl(1,k,n)$ and $\Fl(k-1,k,n)$ using an appropriate number of copies of $\mQ_2$ and $\mU_1^{\vee}.$

\begin{corollary} \label{cor:onF12n}
On $\Fl(1,2,n)$ one has
\begin{gather*} \mZ(\mQ_2) \subset \Fl(1, 2, n))\cong \PP_{\PP^{n-2}}(\of (-1) \oplus \of); \\
\mZ(\mU_1^{\vee}) \subset \Fl(1,2, n))\cong \PP_{\PP^{n-2}}({\mQ}(-1) \oplus \of(-1) ). 
\end{gather*}
\end{corollary}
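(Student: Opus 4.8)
The plan is to obtain this corollary directly as the special case $k_1 = 1$, $k_2 = 2$ of Lemma \ref{lem:identificationsOnFlags}, after unwinding on the relevant base the tautological bundles appearing there. The only genuine input is that lemma; the corollary merely records the low-rank situation in which both Grassmann bundles degenerate to $\PP^1$-bundles over $\PP^{n-2}$.

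For the identification of $\mZ(\mQ_2)$, I would set $k_1 = 1$ and $k_2 = 2$ in the first line of Lemma \ref{lem:identificationsOnFlags}. The base of the Grassmann bundle becomes $\Gr(k_2 - 1, n-1) = \Gr(1, n-1)$, which in the subspace convention is exactly $\PP^{n-2}$. On this projective space the tautological subbundle $\overline{\mU}$ has rank $1$, so it is the bundle appearing in the Euler sequence, namely $\overline{\mU} \cong \of_{\PP^{n-2}}(-1)$. Hence the rank-$2$ bundle $\overline{\mU} \oplus \of$ becomes $\of(-1) \oplus \of$, and the Grassmann bundle $\Gr(1, -)$ of lines inside a rank-$2$ bundle is just its projectivization. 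This yields $\mZ(\mQ_2) \subset \Fl(1,2,n) \cong \PP_{\PP^{n-2}}(\of(-1) \oplus \of)$.

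For $\mZ(\mU_1^{\vee})$ I would again specialize to $k_1 = 1$, $k_2 = 2$, this time in the second line of the lemma. The base becomes $\Gr(k_1, n-1) = \Gr(1, n-1) = \PP^{n-2}$, and the fibre dimension of the Grassmann bundle is $k_2 - k_1 = 1$, so once more we land on a projectivization. On $\PP^{n-2}$ the tautological quotient bundle $\overline{\mQ}$ has rank $n-2$; keeping the short-hand $\mQ$ for it, the bundle whose projectivization we take is $\mQ(-1) \oplus \of(-1)$, giving the second isomorphism. There is essentially no obstacle: the content lies entirely in Lemma \ref{lem:identificationsOnFlags}, and the only points to watch are the convention $\Gr(1,m) \cong \PP^{m-1}$ together with the identification $\overline{\mU} \cong \of(-1)$ of the tautological subbundle on a projective space.
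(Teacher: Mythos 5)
Your proposal is correct and is exactly the paper's route: the corollary is stated there as an immediate specialization of Lemma \ref{lem:identificationsOnFlags} to $k_1=1$, $k_2=2$, using $\Gr(1,n-1)\cong\PP^{n-2}$, $\overline{\mU}\cong\of_{\PP^{n-2}}(-1)$, and the identification of a $\Gr(1,E)$-bundle with $\PP(E)$ in the subspace convention. Only a cosmetic slip: the second zero locus is a $\PP^{n-2}$-bundle (the projectivization of the rank-$(n-1)$ bundle $\mQ(-1)\oplus\of(-1)$), not a $\PP^1$-bundle as your opening paragraph suggests, though your actual computation lands on the right statement.
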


Putting together the above results we get the following.
\begin{corollary} \label{cor:blowupflag}
One has the following isomorphisms:
\[\Bl_{\PP^{r-1}}\PP^n \cong \mZ(\mQ_2^{\oplus r}) \subset \Fl(1, r+1, n+1) \cong \mZ(\mQ_{\PP^{n-r}}(0,1)) \subset \PP^{n-r} \times \PP^n.\]

\end{corollary}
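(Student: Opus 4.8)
The plan is to establish the chain of isomorphisms one link at a time, relying on the toolbox already assembled. The rightmost isomorphism $\mZ(\mQ_{\PP^{n-r}}(0,1)) \subset \PP^{n-r}\times\PP^n \cong \Bl_{\PP^{r-1}}\PP^n$ is precisely the content of Lemma \ref{lem:blow}, so by transitivity it suffices to produce the single isomorphism $\Bl_{\PP^{r-1}}\PP^n \cong \mZ(\mQ_2^{\oplus r}) \subset \Fl(1,r+1,n+1)$. This is the $r$-fold analogue of Corollary \ref{cor:onF12n} alluded to in the remark following Lemma \ref{lem:identificationsOnFlags}.

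First I would view the flag as a projective bundle over the larger Grassmannian, writing $\Fl(1,r+1,n+1) \cong \PP_{\Gr(r+1,n+1)}(\mU)$ with $\mU$ the rank-$(r+1)$ tautological subbundle, exactly as in the proof of Lemma \ref{lem:identificationsOnFlags}. Since $\mQ_2$ is pulled back from the base $\Gr(r+1,n+1)$, the zero locus $\mZ(\mQ_2^{\oplus r})$ is the restriction of this projective bundle to the locus $\mZ(\mQ^{\oplus r}) \subset \Gr(r+1,n+1)$ cut out by the corresponding $r$ sections of the rank-$(n-r)$ quotient $\mQ$.

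Next I would identify this base locus explicitly. A section of $\mQ$ vanishes at $[W]$ precisely when the corresponding vector of $\C^{n+1}$ lies in $W$; for $r$ general sections, coming from vectors that span a fixed $r$-dimensional $V'\subset\C^{n+1}$, the condition becomes $V'\subset W$, so $\mZ(\mQ^{\oplus r}) \cong \Gr(1,n+1-r) = \PP^{n-r}$, the space of lines in $\C^{n+1}/V'$. This is the iterated version of the computation $\mZ(\mQ)\cong\Gr(k_2-1,n-1)$ appearing in Lemma \ref{lem:identificationsOnFlags}, and since the resulting locus is smooth of the expected dimension $n-r$, the general sections are automatically transverse. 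Restricting $\mU$ to this $\PP^{n-r}$ yields the short exact sequence $0 \to V'\otimes\of \to \mU|_{\mZ} \to \of_{\PP^{n-r}}(-1) \to 0$, which splits because $\Ext^1(\of(-1),\of^{\oplus r}) = H^1(\PP^{n-r},\of(1)^{\oplus r}) = 0$; hence $\mU|_\mZ \cong \of(-1)\oplus\of^{\oplus r}$. Therefore $\mZ(\mQ_2^{\oplus r}) \cong \PP_{\PP^{n-r}}(\of(-1)\oplus\of^{\oplus r})$, which is exactly $\Bl_{\PP^{r-1}}\PP^n$ by the second identification of Remark \ref{rmk:wisniewski}.

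The only point requiring care — and the main (mild) obstacle — is checking that the $r$ sections can indeed be chosen general enough that their span $V'$ has full dimension $r$ and that the restriction $\mU|_\mZ$ splits as claimed; both reduce to the transversality of general sections of the globally generated bundle $\mQ$ together with the cohomology vanishing above. Once the middle object is identified with $\PP_{\PP^{n-r}}(\of(-1)\oplus\of^{\oplus r})$, the full chain closes by combining Remark \ref{rmk:wisniewski} for the leftmost link and Lemma \ref{lem:blow} for the rightmost one.
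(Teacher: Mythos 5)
Your proposal is correct and follows essentially the same route the paper intends: it realises $\Fl(1,r+1,n+1)$ as $\PP_{\Gr(r+1,n+1)}(\mU)$, identifies $\mZ(\mQ^{\oplus r})\subset\Gr(r+1,n+1)$ with $\PP^{n-r}$ and the restricted tautological bundle with $\of(-1)\oplus\of^{\oplus r}$ (the $r$-copy generalisation of Lemma \ref{lem:identificationsOnFlags} that the paper invokes implicitly), and then closes the chain with Remark \ref{rmk:wisniewski} and Lemma \ref{lem:blow}. The only difference is that you write out the splitting argument ($\Ext^1(\of(-1),\of^{\oplus r})=0$) that the paper leaves to the reader under its phrase ``putting together the above results.''
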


In many occasions we will need to blow up along non-linear subvarieties. The following lemma gives an easy description that applies to the case of complete intersection curves cut by divisors of the same (multi)degree. 
\begin{lemma}[{\cite[Lemma 2.2]{eg2}}]
\label{lem:blowup} Let $X:=X_(d,1)$ be a general hypersurface of bidegree $(d,1)$ in $Z \times \PP^1$. Then $X \cong Bl_S Z,$ where $S$ is the intersections of $2$ divisors of degree $d$ on $ Z$.
\end{lemma}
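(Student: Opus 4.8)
The plan is to realise $X$ as the closure of the graph of the rational map cut out by the pencil defining $S$, and then to identify this graph closure with the blow-up. A divisor of class $(d,1)$ on $Z\times\PP^1$ is the zero locus of a section of $\of_Z(d)\boxtimes\of_{\PP^1}(1)$, and the Künneth decomposition $H^0(Z\times\PP^1,\of(d,1))\cong H^0(Z,\of_Z(d))\otimes H^0(\PP^1,\of(1))$ lets me write the defining section, after relabelling, as $\sigma=s\,g-t\,f$ for two sections $f,g\in H^0(Z,\of_Z(d))$, where $[s:t]$ are the coordinates on $\PP^1$. For $X$ general these $f,g$ are general, so $S=\mZ(f)\cap\mZ(g)$ is a codimension-$2$ complete intersection of two divisors of degree $d$, which is the $S$ of the statement.

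Next I would analyse the projection $\pi\colon X\to Z$. Over a point $z\notin S$ the linear equation $s\,g(z)=t\,f(z)$ has the unique solution $[s:t]=[f(z):g(z)]$, so $\pi$ is an isomorphism there; over $z\in S$ both coefficients vanish and the whole fibre $\{z\}\times\PP^1$ is contained in $X$. Thus $X$ is precisely the closure in $Z\times\PP^1$ of the graph of the rational map $\phi=[f:g]\colon Z\dashrightarrow\PP^1$, whose indeterminacy locus is exactly $S$, and $\pi$ is a birational morphism that is an isomorphism away from the codimension-$2$ locus $S$. To upgrade this to an honest isomorphism $X\cong\Bl_S Z$ I would invoke the universal property of the blow-up. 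The crucial observation is that $\mathcal{I}_S\cdot\of_X$ is invertible: on the chart $\{s\neq 0\}$ the relation $g=t f$ gives $(f,g)\of_X=(f)$, and on $\{t\neq 0\}$ the relation $s g=f$ gives $(f,g)\of_X=(g)$, so the pulled-back ideal is locally principal and, $X$ being integral, non-zero. Hence $\pi$ factors uniquely through a $Z$-morphism $X\to\Bl_S Z$; conversely the two generators $f,g$ of $\mathcal{I}_S$ make $\mathcal{I}_S\cdot\of_{\Bl_S Z}$ invertible and thus define an embedding $\Bl_S Z\hookrightarrow Z\times\PP^1$ whose image satisfies $s g-t f=0$ and therefore lands in $X$. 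These two $Z$-morphisms agree on the dense open $Z\setminus S$, so they are mutually inverse.

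The step I expect to be the main obstacle is the scheme-theoretic part of this last identification: a priori $\mZ(\sigma)$ could acquire extra components or a non-reduced structure along $S\times\PP^1$, so that the graph closure fails to coincide with $\Bl_S Z$ on the nose. This is exactly where the complete-intersection hypothesis on $S$ does the work. Since $(f,g)$ is a regular sequence, $\mathcal{I}_S$ is of \emph{linear type}: the Rees algebra $\bigoplus_{k\geq 0}\mathcal{I}_S^k$ is the quotient of the symmetric algebra $\Sym(\of_Z(-d)^{\oplus 2})$ by the single Koszul relation $s\,g-t\,f$. Taking $\Proj$ over $Z$ identifies $\Bl_S Z$ directly with $\mZ(s g-t f)\subset\PP_Z(\of^{\oplus 2})=Z\times\PP^1$, a divisor of class $(d,1)$. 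This simultaneously shows that $X=\mZ(\sigma)$ is irreducible of the expected dimension — the graph component has dimension $\dim Z$ while $S\times\PP^1$ has dimension $\dim Z-1$ and is therefore absorbed into its closure — and that its scheme structure is the correct one. I would thus either feed this regularity input into the universal-property argument above, or simply present the linear-type computation as the complete proof.
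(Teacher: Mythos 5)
Your proof is correct. Note first that the paper itself does not prove this lemma at all: it is imported by citation from \cite[Lemma 2.2]{eg2}, so any complete argument you give is necessarily "your own route." What you wrote is a sound, self-contained proof, and — importantly — you identified the one genuine subtlety yourself: the naive graph-closure argument only shows that $\mZ(\sigma)$ \emph{contains} the closure of the graph of $[f:g]$ together with $S\times\PP^1$, and one must rule out extra components or embedded structure along $S\times\PP^1$ before the universal property of the blow up can be applied (invertibility of $\mathcal{I}_S\cdot\of_X$ presupposes $X$ integral, which is exactly what is at stake). Your resolution is the right one and is the cleanest possible proof: since $(f,g)$ is a regular sequence for general $f,g$, the ideal $\mathcal{I}_S$ is of linear type, so $\Bl_S Z=\Proj\bigl(\bigoplus_k\mathcal{I}_S^k\bigr)\cong\Proj\Sym(\mathcal{I}_S)$, and the Koszul presentation $\of_Z(-2d)\rightarrow\of_Z(-d)^{\oplus 2}\rightarrow\mathcal{I}_S\rightarrow 0$ exhibits this $\Proj$ as the divisor $\mZ(sg-tf)$ of class $(d,1)$ in $\PP_Z(\of^{\oplus 2})=Z\times\PP^1$; combined with the K\"unneth decomposition identifying a general $(d,1)$ section with a general pair $(f,g)$, this gives the scheme-theoretic equality $X\cong\Bl_S Z$ directly, with the graph/universal-property discussion reduced to motivation. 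Two cosmetic remarks: the genericity of $X$ (which the paper builds into the notation $X_{(d,1)}$) is what guarantees $f,g$ form a regular sequence, so it is worth flagging that the statement fails for special sections (e.g.\ $f,g$ sharing a common divisor); and your dimension count for the exceptional locus is consistent with the fact that for a codimension-$2$ complete intersection the exceptional divisor $\PP_S(\mathcal{I}_S/\mathcal{I}_S^2)$ is exactly $S\times\PP^1$, which is a good sanity check to include.
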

The lemma above will turn out to be handy in a number of circumstances. Although it is stated here for $Z$ prime, it admits an obvious generalisation when the Picard rank of $Z$ is greater than 1.

Lemma \ref{lem:blowup} admits a classical generalisation in higher codimension, known as the \emph{Cayley trick} (see \cite[Thm 2.4]{kimkim}, or \cite[3.7]{ilievmanivel}), which in turn can be considered as a generalisation of Orlov's formula for the derived category of blow ups. The setup is the following: assume that we have $Y= \mZ(A) \subset S$, where $A$ is ample of rank $r\geq 2$. We have a natural isomorphism $H^0(S,A) \cong H^0(\PP(A^{\vee}), \of_{\PP(A^{\vee})}(1))$. The same section defining $Y$ defines a hypersurface $X$ in $\PP(A^{\vee})$. The complete result is that the derived category of $X$ admits a semiorthogonal decomposition containing $r-1$ copies of $D^b(Y)$. When the rank of $A$ is exactly 2, this produces a generalisation of the above Lemma \ref{lem:blowup}.
\begin{lemma}[{\cite[Lemma 3.2]{kimkim}}] \label{lem:cayley} Let $S, X, Y$ be as above, and let $A$ be ample of rank 2. Then $X \cong \Bl_Y S$.
\end{lemma}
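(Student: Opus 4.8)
The plan is to realise the projection $\pi\colon \PP(A^\vee)\to S$, restricted to $X$, as the blow-down morphism. Since $\rank A=2$, the map $\pi$ is a $\PP^1$-bundle, and under the identification $H^0(\PP(A^\vee),\of_{\PP(A^\vee)}(1))\cong H^0(S,A)$ the tautological section $\tilde s$ cutting out $X$ corresponds exactly to the section $s$ with $Y=\mZ(s)$. Concretely, $\tilde s$ is the composite $\of(-1)\hookrightarrow \pi^*A^\vee\to \of_{\PP(A^\vee)}$, where the first arrow is the tautological subbundle inclusion (subspace convention) and the second is the pullback of the contraction $A^\vee\to\of_S$ induced by $s$. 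First I would analyse $\pi|_X$ fibrewise: over $p\in S$ the restriction of $\tilde s$ to the fibre $\PP(A^\vee_p)\cong\PP^1$ is a linear form whose coefficients are the components of $s(p)\in A_p$, so it vanishes at a single point when $s(p)\neq 0$ and identically when $s(p)=0$.

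This shows at once that $\pi|_X\colon X\to S$ is an isomorphism over $S\setminus Y$, while $\pi^{-1}(Y)\cap X=\PP(A^\vee|_Y)$ is a divisor mapping onto $Y$ with $\PP^1$-fibres; in particular $\pi|_X$ is a birational projective morphism. Next I would match this exceptional divisor with that of the blow up. As $Y=\mZ(s)$ is the zero locus of a regular section of the rank-$2$ bundle $A$, the Koszul resolution gives the conormal identification $N^\vee_{Y/S}\cong A^\vee|_Y$, so the exceptional locus is $\PP(A^\vee|_Y)=\PP(N^\vee_{Y/S})$. Here the rank-$2$ hypothesis is essential: for a rank-$2$ bundle one has $N\cong N^\vee\otimes\det N$, and since $\PP(E)\cong\PP(E\otimes L)$ this yields $\PP(N^\vee_{Y/S})\cong\PP(N_{Y/S})$, which is precisely the exceptional divisor of $\Bl_Y S$. (For higher rank this last identification fails, which is why the Cayley trick then produces only a semiorthogonal decomposition rather than a blow up.)

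To upgrade the set-theoretic picture to an isomorphism of schemes I would invoke the universal property of the blow up. The Koszul complex of $s$ reads $0\to \det A^\vee\to A^\vee\to \of_S\to\of_Y\to 0$, so $A^\vee\twoheadrightarrow\id_Y$; applying $\Sym$ produces a surjection $\Sym A^\vee\twoheadrightarrow\bigoplus_{d\geq 0}\id_Y^{\,d}$ and hence a closed embedding $\Bl_Y S\hookrightarrow\Proj(\Sym A^\vee)=\PP(A^\vee)$ over $S$. The explicit description of $\tilde s$ above shows that the pulled-back ideal $\id_Y\cdot\of_X$ is generated by the image of $\of(-1)|_X$, hence invertible; by the universal property this furnishes a morphism $X\to\Bl_Y S$ over $S$, which is birational and restricts to the identity on $S\setminus Y$. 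Comparing the two closed subschemes of $\PP(A^\vee)$ along their exceptional loci—both equal to $\PP(A^\vee|_Y)$—together with irreducibility and equality of dimensions forces them to coincide, giving $X\cong\Bl_Y S$.

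The step I expect to be the main obstacle is precisely this last one. The fibrewise analysis only yields the topological statement, and the delicate point is to verify the scheme structures agree, i.e.\ that $\id_Y\cdot\of_X$ is genuinely Cartier (equivalently, that $X$ already carries the Rees-algebra $\Proj$ structure) rather than merely that $X$ and $\Bl_Y S$ have the same points. This is where regularity of $s$ and the Koszul identity $\ker(A^\vee\to\of_S)=\det A^\vee$ do the real work: once the pulled-back ideal is shown to be locally principal, the universal property and a normality/Zariski's-main-theorem argument close the gap.
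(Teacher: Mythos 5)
Your overall strategy is sound, and since the paper offers no proof of this lemma (it simply cites Kim--Kim), a direct argument like yours is worth having: the fibrewise analysis of $\pi|_X$, the Koszul identification $N^\vee_{Y/S}\cong A^\vee|_Y$, the rank-two self-duality $N\cong N^\vee\otimes\det N$, and the Rees-algebra/universal-property mechanism are all the right ingredients. But there is one genuinely false step, and it sits precisely where you say the real work happens. You claim that $\id_Y\cdot\of_X$ is ``generated by the image of $\of(-1)|_X$''. That image is zero: $X=\mZ(\tilde s)$ is by definition the locus where the composite $\of(-1)\hookrightarrow\pi^*A^\vee\to\of$ vanishes, so on $X$ this composite is the zero map and generates the zero ideal. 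The correct statement is dual to yours: $\id_Y\cdot\of_X$ is the image of $\pi^*A^\vee|_X\to\of_X$, and since this map kills the subbundle $\of(-1)|_X$, it factors through the tautological \emph{quotient} line bundle $(\pi^*A^\vee/\of(-1))|_X\cong\of(1)\otimes\pi^*\det A^\vee|_X$ (here is where $\rank A=2$ enters: the quotient has rank one). Thus $\id_Y\cdot\of_X$ is the image of a map from a line bundle, hence locally principal, and it is invertible because $X$ is integral and the map is not identically zero; integrality of $X$ follows since $X$ is a Cartier divisor of pure dimension $\dim S$ while its part over $Y$ has dimension $\dim S-1$, so no component of $X$ lies over $Y$ and $X$ is the closure of the section over $S\setminus Y$. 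With this repair the universal property gives the morphism $X\to\Bl_YS$ and your Zariski-main-theorem argument closes.

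Two further slips should be fixed in the same spirit. First, in the subspace convention you (and the paper) adopt, $\Proj(\Sym A^\vee)$ is $\PP(A)$, not $\PP(A^\vee)$; the Rees embedding $\Bl_YS\hookrightarrow\Proj(\Sym A^\vee)$ therefore lands in $\PP(A)$, and comparing it with $X\subset\PP(A^\vee)$ requires invoking once more the rank-two duality $A^\vee\cong A\otimes\det A^\vee$ together with $\PP(E)\cong\PP(E\otimes L)$ --- the same identification you use for the normal bundle, but it must be said, as it is exactly the point where higher rank would break the comparison. Second, the final coincidence of the two subschemes should be argued over the dense open set $S\setminus Y$, where both are the closure of the same section determined by $s$, rather than ``along their exceptional loci'': two distinct irreducible subvarieties of the same dimension can perfectly well share a divisor, so agreement of the exceptional loci alone forces nothing.
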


How can we effectively use the Cayley trick in our case? Assume that we have a bundle $F=E \oplus G$ on $\Fl(1,2,n)$ with $G= \pi_2^* \widetilde{G}$ for a bundle $\widetilde{G}$ on $\Gr(2,n)$. Take $\sigma \oplus \mu \in H^0(\Fl(1,2,n), E \oplus G)$, which defines the chain of inclusions $X:=V(\sigma \oplus \mu) \subset Y:=V(\mu)\subset \Fl(1,2,n)$. The section $\mu$ induces a section $(\pi_2)_*(\mu) \in H^0(\Gr(2,n),\widetilde{G})$. Let $\widetilde{Y}:=V((\pi_2)_* \mu) \subset \Gr(2,n)$; in particular, $X$ can be regarded as $V(\sigma) \subset \PP_{\widetilde{Y}}(\mU|_{\widetilde{Y}})$.

Suppose there is an identification $\varphi:H^0(Y, E|_Y) \rightarrow H^0(\widetilde{Y},\mU|_{\widetilde{Y}} \otimes L)$, where $L$ is a line bundle on $\widetilde{Y}$ (note that for any $\mE, L$, $\PP(\mE) \cong \PP(\mE \otimes L)$). We are thus in the following situation:

\[
\xymatrix@C=5pt{
\Fl(1,2,n) \ar[d]^-{\pi_2}& \supset & Y:=V(\mu) & \supset & X:=V(\sigma \oplus \mu) \\
\Gr(2,n) & \supset & \widetilde{Y}:=V((\pi_2)_* \mu) & \supset & \widetilde{X}:=V(\varphi(\sigma|_Y))
}
\]

Then by Lemma \ref{lem:cayley} we have
\[
X \cong \Bl_{\widetilde{X}} \widetilde{Y}.
\]

The ampleness of $\mU|_{\widetilde{Y}} \otimes L$ required in Lemma \ref{lem:cayley} is not necessary, as shown in \cite[Proposition 46]{bfm}. We remark that $\sigma|_Y$ is general in $H^0(Y, E|_Y)$ if $H^i(\Fl,\W^i G^\vee \otimes E)$ vanishes for any $i>0$, which implies that $H^0(\Fl, E)$ surjects onto $H^0(Y, E|_Y)$.


We do a recap in the following handy corollary. This is will be useful to deal with the case of complete intersection curves of different degrees.
\begin{corollary}\label{cor:cayleycrit}
Assume that we have a bundle $F=E \oplus G$ on $\Fl(1,2,n)$, with $G=\pi_2^*\widetilde{G}$ for a bundle $\widetilde{G}$ on $\Gr(2,n)$ and  $X=\mZ(F) \subset  Y= \mZ(G) \subset \Fl(1,2,n)$. Denote by $\widetilde{Y}$ the zero locus $\widetilde{Y}=\mZ(\widetilde{G}) \subset \Gr(2,n)$. Assume that
$H^0(Y, E|_Y) \cong H^0(\widetilde{Y},\mU|_{\widetilde{Y}} \otimes L)$ for some line bundle $L$ on $\widetilde{Y}$. Denote by $\widetilde{X}=\mZ(\mU|_{\widetilde{Y}} \otimes L)\subset \tY$. Then $X \cong \Bl_{\widetilde{X}} \widetilde{Y}$.
\end{corollary}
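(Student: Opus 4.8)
The plan is to recognise the statement as a clean packaging of the discussion immediately preceding it, and to carry out that discussion rigorously in three moves: first present $Y$ as a projective bundle over $\widetilde{Y}$, then identify $X \subset Y$ with the hypersurface produced by the Cayley trick, and finally invoke Lemma \ref{lem:cayley}.

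First I would pass to the projective-bundle description of the flag. Using the identification $\Fl(1,2,n) \cong \PP_{\Gr(2,n)}(\mU)$ with $\pi_2$ the bundle projection, together with the fact that $G = \pi_2^*\widetilde{G}$ is a pullback, the zero locus $Y = \mZ(G)$ is exactly $\pi_2^{-1}(\widetilde{Y})$ scheme-theoretically. Since restricting a projective bundle to a subscheme of the base yields the projectivisation of the restricted bundle, this gives a canonical isomorphism $Y \cong \PP_{\widetilde{Y}}(\mU|_{\widetilde{Y}})$, compatible with $\pi_2$, and $X = \mZ(E|_Y)$ is cut inside $Y$ by the restricted section $\sigma|_Y \in H^0(Y, E|_Y)$.

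Next I would turn $X$ into a relative hypersurface and match it with the Cayley data. Set $A := \mU|_{\widetilde{Y}} \otimes L$, a rank-$2$ bundle on $\widetilde{Y}$, so that $\widetilde{X} = \mZ(A)$. Since $\PP(\mathcal{E}) \cong \PP(\mathcal{E} \otimes L')$ for any line bundle $L'$, and since $\mU^\vee \cong \mU \otimes (\det \mU)^\vee$ forces $\PP(\mU|_{\widetilde{Y}}) \cong \PP(\mU^\vee|_{\widetilde{Y}})$ in rank $2$, I obtain
\[
Y \cong \PP_{\widetilde{Y}}(\mU|_{\widetilde{Y}}) \cong \PP_{\widetilde{Y}}(A^\vee).
\]
Under the pushforward dictionary $H^0(\PP(A^\vee), \of_{\PP(A^\vee)}(1)) \cong H^0(\widetilde{Y}, A) = H^0(\widetilde{Y}, \mU|_{\widetilde{Y}} \otimes L)$, the assumed isomorphism $\varphi\colon H^0(Y, E|_Y) \cong H^0(\widetilde{Y}, \mU|_{\widetilde{Y}} \otimes L)$ identifies $\sigma|_Y$ with a section $\tau \in H^0(\widetilde{Y}, A)$, viewed as a section of $\of_{\PP(A^\vee)}(1)$. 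Thus $X$ becomes the hypersurface cut by $\tau$ in $\PP(A^\vee)$, while $\widetilde{X} = \mZ(\tau) \subset \widetilde{Y}$ is cut by the very same section on the base — precisely the input of the rank-$2$ Cayley trick.

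Lemma \ref{lem:cayley} then gives $X \cong \Bl_{\widetilde{X}} \widetilde{Y}$, with the ampleness hypothesis dropped thanks to \cite[Proposition 46]{bfm}. The step I expect to require the most care is the bundle-level compatibility concealed in the hypothesis: one must check that $\varphi$ is genuinely induced by an isomorphism $E|_Y \cong \of_{\PP(A^\vee)}(1)$ (up to a pullback twist), so that the two descriptions agree as \emph{subschemes} of $Y$ and not merely at the level of spaces of global sections. This is where generality enters — one uses that $H^0(\Fl(1,2,n), E)$ surjects onto $H^0(Y, E|_Y)$, guaranteed by the vanishing $H^i(\Fl, \W^i G^\vee \otimes E) = 0$ for $i>0$ recorded above, so that a general $\sigma$ on the flag restricts to a general section on $Y$ and makes $\widetilde{X}$ the expected smooth centre of the blow up.
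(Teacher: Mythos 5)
Your proposal is correct and follows essentially the same route as the paper: the corollary is stated there as a recap of the immediately preceding discussion, which likewise identifies $Y \cong \PP_{\widetilde{Y}}(\mU|_{\widetilde{Y}})$, transports $\sigma|_Y$ through $\varphi$ and the twist isomorphism $\PP(\mE)\cong\PP(\mE\otimes L)$ into a section of $\of_{\PP(A^\vee)}(1)$, and then applies Lemma \ref{lem:cayley} with the ampleness hypothesis removed via \cite[Proposition 46]{bfm} and the generality of $\sigma|_Y$ ensured by the surjectivity of $H^0(\Fl,E)\to H^0(Y,E|_Y)$. Your closing caveat about $\varphi$ needing to come from a bundle-level identification $E|_Y\cong\of_{\PP(A^\vee)}(1)$ is a point the paper leaves implicit, so flagging it is a welcome refinement rather than a deviation.
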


There is a further generalisation of the Cayley trick, that applies to degeneracy loci as well, which we recall for completeness.

\begin{lemma}[{\cite[Lemma 2.1]{kuznetsovKuchle}}]
\label{lem:blowDegeneracyLocus}
Let $\varphi:E \rightarrow F$ be a morphism of vector bundles of ranks $r+1$, $r$ respectively on a Cohen--Macaulay variety $X$.
Denote by $D_k(\varphi)$ the $k$-th degeneracy locus of $\varphi$, i.e., the locus where the morphism has corank at least $k$.
Consider the projectivization $\pi:\PP(E) \rightarrow X$, then $\varphi$ gives a global section of the vector bundle $\pi^*F \otimes \of(1)$.
If $\codim D_k(\varphi) \ge k + 1$ for all $k \ge 1$ then the zero locus of $\varphi$ on $\PP(E)$ is isomorphic to the blow up of $X$ along $D_1(\varphi)$.
\end{lemma}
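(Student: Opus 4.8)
The plan is to realise the zero locus $Z$ of $\varphi$, viewed as a section of $\pi^*F\otimes\of(1)$, as the closure of the graph of the rational map sending $x$ to the kernel line of $\varphi_x$, and then to recognise that graph closure as the blow-up. First I would make the section explicit: composing the tautological inclusion $\of(-1)\hookrightarrow\pi^*E$ with $\pi^*\varphi$ yields a map $\of(-1)\to\pi^*F$, which is exactly the section $s_\varphi$ cut out by $\varphi$ (this uses the subspace convention and $\pi_*\of(1)\cong E^\vee$, so that $H^0(\PP(E),\pi^*F\otimes\of(1))\cong\Hom(E,F)$). At a point $(x,[\ell])$ with $\ell\subset E_x$ a line, $s_\varphi$ vanishes precisely when the composite $\ell\to F_x$ is zero, i.e. when $\ell\subseteq\ker\varphi_x$. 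Hence, set-theoretically, the fibre of $\pi|_Z\colon Z\to X$ over $x$ is $\PP(\ker\varphi_x)$; over $X\setminus D_1(\varphi)$, where $\varphi_x$ has full rank $r$ and $\ker\varphi_x$ is a line, this fibre is a single point, so $\pi|_Z$ is an isomorphism there and in particular birational.

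Next I would control the dimension of $Z$ using the codimension hypothesis. On the stratum $D_k(\varphi)\setminus D_{k+1}(\varphi)$ the kernel of $\varphi_x$ has dimension $k+1$, so the fibre of $\pi|_Z$ is a $\PP^k$; since $\codim D_k\ge k+1$, the preimage of $D_1(\varphi)$ then has dimension at most $\dim X-1$. On the other hand $\PP(E)$ is Cohen--Macaulay, being a projective bundle over the Cohen--Macaulay variety $X$, and $Z$ is cut out inside it by the $r$ components of $s_\varphi$; since the generic part over $X\setminus D_1(\varphi)$ already has dimension $\dim X$, the locus $Z$ has codimension exactly $r$ and is therefore a complete intersection, hence Cohen--Macaulay and of pure dimension $\dim X$. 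Combining the two facts, no irreducible component of $Z$ can be contained in $\pi^{-1}(D_1(\varphi))$, so $Z$ has a single top-dimensional component, namely the closure $\Gamma$ of the graph of $x\mapsto[\ker\varphi_x]$; being Cohen--Macaulay and generically reduced it is integral, and thus $Z=\Gamma$.

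Finally I would match $\Gamma$ with $\Bl_{D_1(\varphi)}X$, where $D_1(\varphi)$ carries its natural determinantal scheme structure given by the ideal $I$ of maximal minors of $\varphi$. Working locally, I would write $\varphi$ as an $r\times(r+1)$ matrix $M$ over a local ring $A$, so that $\PP(E)=\Proj A[y_1,\dots,y_{r+1}]$ and $Z=\{My=0\}$ is the locus where the coordinate vector $y$ lies in $\ker M$. By Cramer's rule the vector of signed maximal minors $(\Delta_1,\dots,\Delta_{r+1})$ spans $\ker M$ wherever $M$ has full rank, so $\Gamma$ is the closure of the graph of $x\mapsto[\Delta_1(x):\cdots:\Delta_{r+1}(x)]$, and this graph closure is by definition $\Proj\bigoplus_{d\ge0}I^d$, that is, the blow-up of $X$ along $D_1(\varphi)$. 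Since these local identifications only use $\varphi$, they are canonical and glue to a global isomorphism $Z\cong\Bl_{D_1(\varphi)}X$ over $X$. The step I expect to be the main obstacle is the second one: everything hinges on ruling out spurious components and embedded structure in $Z$ coming from the excess-dimensional fibres over the deeper degeneracy loci, and it is precisely the hypothesis $\codim D_k(\varphi)\ge k+1$ that forces $Z$ to be a genuine complete intersection of the expected dimension, and hence integral and equal to the graph closure.
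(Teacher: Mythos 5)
The paper never proves this lemma: it is imported wholesale from \cite[Lemma 2.1]{kuznetsovKuchle}, so there is no internal argument to compare yours against, and the only meaningful question is whether your reconstruction is sound. It is. Your three steps are exactly the standard mechanism behind the result: the fibrewise identification of the zero locus $Z$ with $\PP(\ker\varphi_x)$; the stratified dimension count $\dim\bigl(Z\cap\pi^{-1}(D_k\setminus D_{k+1})\bigr)\le \dim D_k+k\le \dim X-1$, which combined with the Krull bound (every component of the zero locus of a section of a rank-$r$ bundle has codimension at most $r$) forces $Z$ to be irreducible of dimension $\dim X$, hence a Cohen--Macaulay local complete intersection in the Cohen--Macaulay scheme $\PP(E)$, hence, being generically reduced, integral; and the Cramer identification of $Z$ with the closure of the graph of $x\mapsto[\Delta_1(x):\cdots:\Delta_{r+1}(x)]$, i.e.\ with $\Proj\bigoplus_{d\ge 0}\mathcal{I}^d$ for $\mathcal{I}$ the ideal of maximal minors. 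You also correctly supply a point the statement above leaves implicit: ``blow up along $D_1(\varphi)$'' only makes sense once $D_1(\varphi)$ is given its determinantal (Fitting-ideal) scheme structure, which is indeed the structure used in \cite{kuznetsovKuchle}.

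Two spots deserve tightening, though neither is a genuine gap. First, in your opening step a set-theoretic bijection does not by itself give that $\pi|_Z$ is an isomorphism over $X\setminus D_1(\varphi)$; what you need is the scheme-theoretic statement that over this open set $Z$ equals $\PP(\ker\varphi)$ for the line subbundle $\ker\varphi\subset E$, which follows from a local normal form for a surjective $\varphi$ (or from your own Cramer computation), and this is precisely what later furnishes generic reducedness. Second, the final gluing should be said more carefully: the local isomorphisms $Z|_U\cong\Bl_{\mathcal{I}|_U}U$ agree on overlaps because each restricts to the canonical identification over the dense open set $X\setminus D_1(\varphi)$, and two morphisms from a reduced scheme to a scheme separated over $X$ agreeing on a dense open subset coincide; alternatively one can bypass gluing entirely by observing that $\mathcal{I}\cdot\of_Z$ is invertible (again by Cramer), so the universal property of the blow-up gives a global morphism $Z\to\Bl_{D_1(\varphi)}X$, which your local analysis then shows to be an isomorphism.
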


In practice, we will often need to find some projective bundle $\PP(\of(-d_1,\dotsc,-d_m) \oplus \of^{\oplus r})$ as the zero locus of a suitable vector bundle over a product of Grassmannians. The following remark will be very helpful for this sake; an instance of its application will be Lemma \ref{projBundle1-12}. 

\begin{rmk}
\label{rem:principalParts} Let $L$ be a line bundle on $X$. For any $k$ one can define $\mathcal{P}^k(L)$, the bundle of $k$-principal parts of $L$, of rank $\binom{k+\ddim X}{k}$. One has $\mathcal{P}^0(L)=L$; by \cite[Exp II, Appendix II 1.2.4.]{SGA} there exist natural short exact sequences
\begin{equation}\label{seq:principalparts} 0 \rightarrow \Sym^k (\Omega_X)(L) \rightarrow \mathcal{P}^k(L) \rightarrow \mathcal{P}^{k-1}(L) \rightarrow 0. \end{equation}
If $X \cong \PP^n$ these bundles of principal parts are homogeneous, and in \cite[Thm 1.1]{re} their splitting type is determined. The situation is particularly simple when we consider $L=\of_{\PP^n}(d)$ with $d \geq k$: in this case one has $\mathcal{P}^k(\of_{\PP^n}(d)) \cong \Sym^k V_{n+1} \otimes \of_{\PP^n}(d-k)$. The sequence above for $k=1$ coincides with the dualised twisted Euler sequence.
\end{rmk}

We finish this section with a classical remark on how to characterise double covers as hypersurfaces in projective bundles. A detailed proof can be found for example in \cite[Lemma 1.2]{lyu}. The formula below can be easily generalised to the case of $k$-cyclic covers, using $\of_P(k)$ instead.

\begin{rmk}\label{lem:doublecovers}
Let $X$ be a 2-fold cyclic covering of $Y$, ramified along a smooth divisor $D$, and $L$ a line bundle with $L^{\otimes 2}=\of_Y(D)$, which is assumed to be $2$-divisible in $\mathrm{Pic}(Y)$. Then $X$ can be identified with $\mZ(\of_P(2))$ in $P:=\PP_Y(\of \oplus L^{\vee})$. 
\end{rmk}

\section{How to compute invariants}
\label{computeinvariants}
In this section we explain and show with a concrete example how we can compute the invariants of a zero locus of a general section of a given homogeneous vector bundle on a product of flag varieties.

As a matter of fact, such computations are not strictly necessary for the identification of the models we found for the Fano 3-folds in the next section. However, we want to stress out the importance of having such a tool for two reasons. On the one hand, one could start producing in an automatised way many examples coming from homogeneous vector bundles on products of flag varieties and later try to identify them using the existing classifications. This was exactly the starting point of this project and what made us able to characterise, along the process, many zero loci of sections from a geometric point of view. Several results of Section \ref{identifications} have been found by trying to generalise the evidences coming from all the examples we had. On the other hand, it goes without saying that these methods will certainly be very useful when a similar search will be performed for varieties which have not yet been classified.

\subsection{The invariants \texorpdfstring{$h^0(-K)$ and $(-K)^3$}{h\^{}0(-K) and (-K)\^{}3}}

These invariants can be computed via intersection theory. If $X$ is a product of flag varieties, then we know its graded intersection ring of algebraic cycle classes modulo numerical equivalence. We know how to integrate, so that Hirzebruch--Riemann--Roch Theorem yields a way to compute $\chi(E)$ for any vector bundle $E$ with assigned Chern classes.

The situation does not change much when we consider a subvariety $\mZ(\mF) \subset X$ given as the zero locus of a general section of some vector bundle $\mF$ on $X$. If we know the Chern classes of $\mF$, we can write down the graded intersection ring of $\mZ(\mF)$, as well as count points on $0$-dimensional cycles.

In concrete examples, instead of doing computations by hand, it is of course convenient to use some computer algebra software. Our choice fell on \cite{M2}, for which an already developed package \cite{Schubert2} implementing the methods we need is available. This allows us to compute $(-K_{\mZ(\mF)})^3$, as the Chern classes of the canonical sheaf of $\mZ(\mF)$ are easy to express. As for $h^0(-K_{\mZ(\mF)})$, we certainly know how to compute $\chi(-K_{\mZ(\mF)})$. But $-K=K-2K$ and $-2K$ is ample, so the Kodaira Vanishing Theorem implies $h^{i>0}(-K_{\mZ(\mF)})=0$.

\subsection{Hodge numbers and tangent cohomology}

Beside the aforementioned invariants, one of the most important data one would like to know about a Fano variety is its Picard rank. More in general, it is rather important to compute $h^{i,j}$ for a given variety. In our setting this is perfectly doable using classical tools as the Koszul complex and a bit of representation theory, even though the computations may quickly become cumbersome if the involved vector bundles have high rank or several summands.

We briefly recall the strategy. Let us suppose that $Y=\mZ(\mF)\subset X$. Assume that $\rank(\mF)=r$. For each $j \in \mathbb{N}$, we have the $j$-th exterior power of the conormal sequence
\begin{equation}
\label{wedgeKConormal}
0\rightarrow
\Sym^j \mF^\vee|_Y \rightarrow
(\Sym^{j-1} \mF^\vee \otimes \Omega_X)|_Y \rightarrow
\dotso \rightarrow
(\Sym^{j-k} \mF^\vee \otimes \Omega^k_X)|_Y \rightarrow
\dotso \rightarrow
\Omega^j_X|_Y \rightarrow
\Omega^j_Y \rightarrow
0.
\end{equation}
As our goal is $h^i(\Omega^j_Y)$, we can compute the dimensions of the cohomology groups of all the other terms in \eqref{wedgeKConormal}, split it into short exact sequences and use the induced long exact sequences in cohomology to get the result.

Each term $(\Sym^{j-k} \mF^\vee \otimes \Omega^k_X)|_Y$ is in turn resolved by an exact Koszul complex
\begin{equation*}
0\rightarrow
\W^r \mF^\vee \otimes \Sym^{j-k} \mF^\vee \otimes \Omega^k_X \rightarrow
\dotso \rightarrow
\mF^\vee \otimes \Sym^{j-k} \mF^\vee \otimes \Omega^k_X \rightarrow
\Sym^{j-k} \mF^\vee \otimes \Omega^k_X,
\end{equation*}
so that we are led to compute the cohomology groups of the terms above. If $X$ is a product of Grassmannians and $F$ is completely reducible, then those terms are completely reducible as well: a decomposition can be found via suitable plethysms. The cohomology groups can be then obtained via the usual Borel--Weil--Bott Theorem. 

Things get worse if $X$ has some genuine flag variety as a factor, in which case $\Omega_X$ is an extension of completely reducible vector bundles, or if $F$ itself is an extension thereof. In these cases, one needs to deal carefully with the exterior/symmetric power of an extension (which is an extension itself) and the tensor product of extensions; in the end, each term of the Koszul complex above is again an extension of completely reducible vector bundles, whose cohomology groups can be easily computed and arranged to get the result.

It may happen that several cohomology groups do not vanish, so that in the induced long exact sequences in cohomology there are boundary homomorphisms whose rank is a priori not known. This leads to some ambiguity in the final results, and can be partially solved by considering the additional relations involving $h^{i,j}$ such as the symmetries in the Hodge diamond and the computation of $\chi(\Omega^j_Y)$ as done above.

Additionally, suppose that we want to get some information on the automorphism group and the space of deformations of $Y$. One way is to compute $h^0(T_Y)$ and $h^1(T_Y)$ via the normal sequence
\[
0 \rightarrow
T_Y \rightarrow
T_X|_Y \rightarrow
\mF|_Y \rightarrow
0.
\]
As before, one can compute the cohomology groups of the terms on the right via the usual Koszul complex and get some information on $h^i(T_Y)$.

A rather easy example of application of the whole routine is provided in Section \ref{aworkedexample}. It is evident that such computations cannot be done by hand for more complicated examples, especially for a significant number of cases. A Macaulay2 \cite{M2} package which was developed to implement and automatise the procedure just described will be presented in \cite{FatighentiTanturriPackage}.

\subsection{A worked example}
\label{aworkedexample}

Let us show how to concretely compute the Hodge numbers of $Y:= \mZ(\mU^{\vee}_{\Gr(2,4)}(1,0) \oplus \of(0,2)) \subset \PP^2 \times \Gr(2,4)=:X$, which we will prove to be a model for \hyperlink{Fano2--16}{2--16}.

Our vector bundle $\mF:=\mU^{\vee}_{\Gr(2,4)}(1,0) \oplus \of(0,2)$ has rank $3$. For $j=0$, \eqref{wedgeKConormal} simply becomes $\of_Y \rightarrow \of_Y$. The Koszul complex resolving $\of_Y$ is
\[
0 \rightarrow
\of(-2,-3) \rightarrow
\mU_{\Gr(2,4)}(-1,-2) \oplus \of(-2,-1) \rightarrow
\of(0,-2) \oplus \mU_{\Gr(2,4)}(-1,0) \rightarrow
\of;
\]
the only non-zero cohomology group is $H^0(\of)\cong \mathbb{C}$, which gives $h^{0,0}=1$ and $h^{0,j}=0$ for $j>0$.

For $j=1$, \eqref{wedgeKConormal} yields the usual conormal short exact sequence. The term on the left is $\mF^\vee|_Y$, which is resolved by a Koszul complex whose terms are
\begin{equation*}
\begin{array}{rcl}
\W^3 \mF^\vee \otimes \mF^\vee & = &\of(-2,-5) \oplus \mU_{\Gr(2,4)}(-3,-3) \\
\W^2 \mF^\vee \otimes \mF^\vee & = & \mU_{\Gr(2,4)}(-1,-4) \oplus \Sym^2\mU_{\Gr(2,4)}(-2,-2)\oplus \of(-2,-3)^{\oplus 2} \oplus\mU_{\Gr(2,4)}(-3,-1) \\
\mF^\vee \otimes \mF^\vee & = & \of(0,-4) \oplus 
\mU_{\Gr(2,4)}(-1,-2)^{\oplus 2} \oplus 
\Sym^2 \mU_{\Gr(2,4)}(-2,0) \oplus 
\of(-2,-1)\\
\mF^\vee & = & \of(0,-2) \oplus 
\mU_{\Gr(2,4)}(-1,0);
\end{array}
\end{equation*}
the only non-zero cohomology group is $h^4(\mF^\vee \otimes \mF^\vee)=1$, which yields $h^3(\mF^\vee|_Y)=1$.

The middle term is $\Omega_X|_Y$, which is resolved by a Koszul complex whose terms are
\begin{equation*}
\begin{array}{rcl}
\W^3 \mF^\vee \otimes \Omega_X &= & 
\mQ_{\Gr(2,4)} \otimes \mU_{\Gr(2,4)}(-2,-4) \oplus \mQ_{\PP^2}(-4,-3)
\\
\W^2 \mF^\vee \otimes \Omega_X & = & 
%
\mQ_{\Gr(2,4)}\otimes\left(\Sym^2\mU_{\Gr(2,4)}(-1,-3)
\oplus \of(-1,-4) \oplus \mU_{\Gr(2,4)}(-2,-2)
\right) \oplus \\ & & {}\oplus
 \mQ_{\PP^2}(-4,-1) \oplus \mU_{\Gr(2,4)} \otimes \mQ_{\PP^2}(-3,-2)
%
\\
\mF^\vee \otimes \Omega_X & = & 
\mQ_{\Gr(2,4)}\otimes\left(\mU_{\Gr(2,4)}(0,-3)
\oplus \Sym^2\mU_{\Gr(2,4)}(-1,-1)
\oplus \of(-1,-2)
\right) \oplus \\ & & {}\oplus
 \mQ_{\PP^2}(-2,-2) \oplus \mU_{\Gr(2,4)}\otimes \mQ_{\PP^2}(-3,0)\\
\Omega_X & = & \mQ_{\Gr(2,4)}\otimes \mU_{\Gr(2,4)}(0,-1) \oplus \mQ_{\PP^2}(-2,0);
\end{array}
\end{equation*}
the only non-zero cohomology groups are $h^3(\mF^\vee \otimes \Omega_X)=1$ and $h^1(\Omega_X)=2$, which yield $h^1(\Omega_X|_Y)=2$ and $h^2(\Omega_X|_Y)=1$.

The long exact sequence in cohomology induced by the conormal sequence then gives $h^{1,1}=2$ and $h^{1,2}=2$, while the other $h^{1,j}$ are zero.

Similar computations can be performed to compute $h^i(T_Y)$, by considering the normal sequence. The middle term is $T_X|_Y$, which is resolved by a Koszul complex whose terms are
\begin{equation*}
\begin{array}{rcl}
\W^3 \mF^\vee \otimes T_X &= & 
\mQ_{\Gr(2,4)} \otimes \mU_{\Gr(2,4)}(-2,-2) \oplus \mQ_{\PP^2}(-1,-3) 
\\
\W^2 \mF^\vee \otimes T_X & = & 
\mQ_{\Gr(2,4)} \otimes \left( 
\Sym^2 \mU_{\Gr(2,4)}(-1,-1) \oplus \of(-1,-2) \oplus \mU_{\Gr(2,4)}(-2,0)
\right) \oplus \\ & & {}\oplus
\mQ_{\PP^2} \otimes \left( 
\mU_{\Gr(2,4)}(0,-2) \oplus \of(-1,-1) 
\right)
\\
\mF^\vee \otimes T_X & = & 
\mQ_{\Gr(2,4)} \otimes \left(
\mU_{\Gr(2,4)}(0,-1) \oplus
\Sym^2 \mU_{\Gr(2,4)}(-1,1) \oplus
\of(-1,0)
\right) \oplus \\ & & {}\oplus
\mQ_{\PP^2} \otimes \left( 
\of(1,-2) \oplus \mU_{\Gr(2,4)}
\right)
\\
T_X & = & 
\mQ_{\Gr(2,4)} \otimes \mU_{\Gr(2,4)}(0,1) \oplus \mQ_{\PP^2}(1,0)
;
\end{array}
\end{equation*}
the only non-zero cohomology groups are 
$h^1(\mF^\vee \otimes T_X)=1$ and $h^0(T_X)=23$, which yield $h^0(T_X|_Y)=24$. Similarly, the term on the right is $\mF|_Y$, which is resolved by a Koszul complex whose terms are
\begin{equation*}
\begin{array}{rcl}
\W^3 \mF^\vee \otimes \mF &= & 
\of(-2,-1) \oplus \mU_{\Gr(2,4)}(-1,-2)
\\
\W^2 \mF^\vee \otimes \mF & = & 
\mU_{\Gr(2,4)}(-1,0) \oplus \of(-2,1) \oplus \Sym^2 \mU_{\Gr(2,4)}(0,-1) \oplus \of(0,-2) \oplus \mU_{\Gr(2,4)}(-1,0)
\\
\mF^\vee \otimes \mF & = & 
\of^{\oplus 2} \oplus \mU_{\Gr(2,4)}(-1,2) \oplus \mU_{\Gr(2,4)}(1,-1) \oplus \Sym^2 \mU_{\Gr(2,4)}(0,1)
\\
\mF & = & 
\of(0,2) \oplus \mU_{\Gr(2,4)}(1,1)
;
\end{array}
\end{equation*}
the only non-zero cohomology groups are 
$h^2(\W^2 \mF^\vee \otimes \mF)=1$, $h^0(\mF^\vee \otimes \mF)=2$, and $h^0(\mF)=32$, which yield $h^0(\mF|_Y)=31$.

Thus, $h^1(T_Y)-h^0(T_Y)=31-24=7$, and indeed Fano \hyperlink{Fano2--16}{2--16} is known to have a $7$-dimensional moduli space.

\section{Fano 3-folds as zero loci of sections}
\label{Fano3folds}

In this section a model for each Fano 3-fold as the zero locus of a general section of a vector bundle over a product of Grassmannians is given, provided that such a description is not available in the literature. For each model we prove the identification with the corresponding Fano; all the examples have been checked to have the right Hodge diamond and invariants as described in Section \ref{computeinvariants}.

\hypertarget{Fano1--1}{\subsection*{Fano 1--1}}
\subsubsection*{Mori-Mukai} Double cover of $\PP^3$ with branch locus a divisor of degree 6.
\subsubsection*{Our description} $\mZ(\of(6)) \subset \PP(1,1,1,1,3)$.
\subsubsection*{Identification} The obvious description as weighted hypersurface is classical. We want to give however another description embedded in a product of non-weighted Grassmannians.

By Lemma \ref{lem:doublecovers}, we can express our Fano as the zero locus of $\of(2)$ over $\PP_{\PP^3}(\of(-3) \oplus \of)$. We thus need to express such projective bundle as the zero locus of a section of a suitable vector bundle. To do that, we adopt a general strategy which will be explained in more details for \hyperlink{Fano1--12}{1--12} or \hyperlink{Fano2--2}{2--2}: we start from the short exact sequences provided by Remark \ref{rem:principalParts}
\begin{equation}
\begin{gathered}
0 \rightarrow
\of(-3) \rightarrow
\of(-2)^{\oplus 4} \rightarrow
\mQ(-2) \rightarrow
0,\\
0 \rightarrow
\of(-2)^{\oplus 4} \rightarrow
\of(-1)^{\oplus 10} \rightarrow
\Sym^2\mQ(-1) \rightarrow
0,\\
\label{thirdseq}
0 \rightarrow
\of(-1)^{\oplus 10} \rightarrow
\of^{\oplus 20} \rightarrow
\Sym^3\mQ \rightarrow
0.
\end{gathered}
\end{equation}
We can arrange the first two using the snake lemma as in Lemma \ref{projBundle1-12} or Lemma \ref{projBundle2-2}: we get
\[
0 \rightarrow
\of(-3) \rightarrow
\of(-1)^{\oplus 10} \rightarrow
\Lambda \rightarrow
0
\]
for a uniquely defined extension $\Lambda \in \Ext^1(\Sym^2\mQ(-1),\mQ(-2))$. The latter sequence can be again arranged with the third one in \eqref{thirdseq}, to get
\[
0 \rightarrow
\of(-3) \rightarrow
\of^{\oplus 20} \rightarrow
K \rightarrow
0
\]
for another uniquely defined extension $K \in \Ext^1(\Sym^3\mQ,\Lambda)$. Adding $\of \rightarrow \of$ to the above sequence, we get that our Fano can be expressed as
\[
\mZ(K(0,1) \oplus \of(0,2)) \subset \PP^{3} \times \PP^{20}.
\]

\hypertarget{Fano1--12}{\subsection*{Fano 1--12}}
\subsubsection*{Mori-Mukai} Double cover of $\PP^3$ with branch locus a smooth quartic surface.
\subsubsection*{Our description} $\mZ (\of(4)) \subset \PP(1,1,1,1,2)$.
\subsubsection*{Identification}
The obvious description as weighted hypersurface is classical. We want to give however a rather simple description as a subvariety in a product of projective spaces. We notice that our Fano is, by Lemma \ref{lem:doublecovers}, the zero locus of $\of(2)$ on $\PP_{\PP^3}(\of(-2) \oplus \of)$.

\begin{lemma}
\label{projBundle1-12}
The projective bundle $\PP_{\PP^3}(\of(-2) \oplus \of)$ can be obtained as the zero locus of $\Lambda(0,1)$ over $\PP^3 \times \PP^{10}$, being $\Lambda \in \Ext^1(\Sym^2 \mQ,\mQ(-1))$ a uniquely defined extension on $\PP^3$ fitting into sequence \eqref{Lambda1--12} below.
\begin{proof}
Our goal is to write $\of_{\PP^3}(-2) \oplus \of_{\PP^3}$ as a subbundle of $\of_{\PP^3}^{\oplus 11}$. By Remark \ref{rem:principalParts}, we have two (dual) canonical short exact sequences on $\PP^3$
\begin{gather*}
0 \rightarrow \of(-2) \rightarrow \of(-1)^{\oplus 4} \rightarrow \mQ(-1) \rightarrow 0,\\
0 \rightarrow \of(-1)^{\oplus 4} \rightarrow \of^{\oplus 10} \rightarrow \Sym^2 \mQ \rightarrow 0.
\end{gather*}
These fit as the first row and middle column of the exact diagram on $\PP^3$ here below, which can be completed by the snake lemma as
\begin{equation}
\label{snake1-12}
\begin{gathered}
\xymatrix{
& 0 \ar[d] & 0 \ar[d] & 0 \ar[d]\\
0 \ar[r] & \of(-2) \ar[d]^-= \ar[r] & \of(-1)^{\oplus 4} \ar[d] \ar[r] & \mQ(-1) \ar[d] \ar[r] & 0 \\
0 \ar[r] & \of(-2)  \ar[d] \ar[r] & \of^{\oplus 10}  \ar[d] \ar[r] & \Lambda \ar[d]  \ar[r] & 0 \\
0 \ar[r] & 0 \ar[r]\ar[d] & \Sym^2 \mQ \ar[r]^-= \ar[d] & \Sym^2 \mQ \ar[r]\ar[d] & 0 \\
& 0 & 0 & 0 \\
}
\end{gathered}
\end{equation}
for a uniquely determined homogeneous vector bundle $\Lambda$ of rank $9$. The last column describes $\Lambda$ as a non-split extension
\begin{equation}
\label{Lambda1--12}
0 \rightarrow \mQ_{\PP^3}(-1) \rightarrow \Lambda \rightarrow \Sym^2 \mQ_{\PP^3} \rightarrow 0.
\end{equation}

The rank $9$ bundle $\Lambda$ is homogeneous, not completely reducible and globally generated, and its space of global sections coincides with $H^0(\PP^3, \Sym^2 \mQ)\cong \Sym^2 V_4$. Adding the middle row of \eqref{snake1-12} to $\of \rightarrow \of$, we get
\[
0 \rightarrow \of(-2)\oplus \of \rightarrow \of^{\oplus 11} \rightarrow \Lambda \rightarrow 0,
\]
whence the conclusion of the lemma.
\end{proof}
\end{lemma}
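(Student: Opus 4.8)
The plan is to apply the general principle recorded immediately after Lemma~\ref{lem:blow}: if a bundle $E$ on a base $X$ fits into a short exact sequence $0 \to E \to \of^{\oplus r} \to G \to 0$ with $H^1(X,E)=0$, then $\PP(E)$ is realised inside $X \times \PP^{r-1}$ as $\mZ$ of a general section of $\pi^*G \otimes \of_{\PP(\of^{\oplus r})}(1)$, which is $G(0,1)$ in product notation. Hence it suffices to exhibit $E=\of_{\PP^3}(-2)\oplus\of_{\PP^3}$ as a subbundle of the trivial bundle $\of_{\PP^3}^{\oplus 11}$ with locally free quotient; that quotient is the announced rank-$9$ bundle $\Lambda$, the eleven trivial summands yield the factor $\PP^{10}$, and $H^1(\PP^3,\of(-2)\oplus\of)=0$ (both line bundles have vanishing first cohomology on $\PP^3$) makes the general-section clause applicable. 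The real content is therefore the construction of the embedding $\of(-2)\hookrightarrow\of^{\oplus 10}$ and the identification of its cokernel.

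To build it I would splice the two dual canonical sequences provided by Remark~\ref{rem:principalParts}, namely the twisted tautological sequence $0\to\of(-2)\to\of(-1)^{\oplus 4}\to\mQ(-1)\to 0$ on $\PP^3\cong\Gr(1,4)$ and its second symmetric analogue $0\to\of(-1)^{\oplus 4}\to\of^{\oplus 10}\to\Sym^2\mQ\to 0$. Taking the former as the top row and the latter as the middle column of a $3\times 3$ diagram, with the left column the identity on $\of(-2)$, the snake (nine) lemma completes it into diagram~\eqref{snake1-12}: the middle row is $0\to\of(-2)\to\of^{\oplus 10}\to\Lambda\to 0$, where $\Lambda$ is the cokernel of the composite $\of(-2)\to\of(-1)^{\oplus 4}\to\of^{\oplus 10}$, and the right column is the extension~\eqref{Lambda1--12}, i.e.\ $0\to\mQ(-1)\to\Lambda\to\Sym^2\mQ\to 0$. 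Sandwiched between locally free sheaves, $\Lambda$ is a vector bundle of rank $9$, and its class in $\Ext^1(\Sym^2\mQ,\mQ(-1))$ is precisely the one output by the snake connecting map; this is the sense in which it is \emph{uniquely defined}.

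Next I would record the properties of $\Lambda$ that I need. Being a quotient of a trivial bundle, $\Lambda$ is globally generated; and since $H^0(\PP^3,\of(-2))=H^1(\PP^3,\of(-2))=0$, the middle row gives $H^0(\Lambda)\cong\Sym^2 V_4$ (identified with $H^0(\Sym^2\mQ)$ through the extension~\eqref{Lambda1--12}, using $H^0(\mQ(-1))=H^1(\mQ(-1))=0$). The extension~\eqref{Lambda1--12} is moreover non-split: Borel--Weil--Bott gives $H^0(\mQ(-1))=0$, so in a hypothetical splitting $\mQ(-1)\oplus\Sym^2\mQ$ every global section would lie in the $\Sym^2\mQ$ summand and the evaluation map could never surject onto the $\mQ(-1)$ summand, contradicting global generation. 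Finally, adding the trivial sequence $0\to\of\to\of\to 0$ to the middle row produces $0\to\of(-2)\oplus\of\to\of^{\oplus 11}\to\Lambda\to 0$, and the principle of the first paragraph then identifies $\PP_{\PP^3}(\of(-2)\oplus\of)$ with $\mZ(\Lambda(0,1))\subset\PP^3\times\PP^{10}$, as claimed.

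The argument is essentially formal, so the points needing attention are the bookkeeping ones. The key design choice is to present $E$ as a \emph{sub} of a trivial bundle rather than a quotient, which is exactly what the Lemma~\ref{lem:blow} principle demands; everything else follows from getting the twists right, in particular starting the principal-parts machinery at $\of(2)$ so that the spliced kernel is exactly $\of(-2)$, and tracking that $\of_{\PP(\of^{\oplus 11})}(1)$ becomes $\of(0,1)$ on the product so the cutting bundle is $\Lambda(0,1)$ and not some other twist. The only genuinely non-mechanical input is the non-splitness of $\Lambda$, which I would dispatch by the global-generation-versus-$H^0(\mQ(-1))=0$ comparison above rather than by a direct $\Ext^1$ computation.
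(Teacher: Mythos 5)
Your proposal is correct and follows essentially the same route as the paper: splicing the two principal-parts sequences from Remark \ref{rem:principalParts} via the snake lemma to obtain $\Lambda$ as the cokernel of $\of(-2)\hookrightarrow\of^{\oplus 10}$, adding the trivial summand $\of\to\of$, and invoking the projectivization-as-zero-locus principle stated after Lemma \ref{lem:blow}. The extra details you supply (the $H^1$ vanishing justifying generality of the section, and the non-splitness of \eqref{Lambda1--12} via global generation against $H^0(\mQ(-1))=0$) are points the paper asserts without proof, and your arguments for them are sound.
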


The previous lemma yields that a model for \hyperlink{Fano1--12}{1--12} is $\mZ(\Lambda(0,1) \oplus \of(0,2)) \subset \PP^3 \times \PP^{10}$.

\hypertarget{Fano2--2}{\subsection*{Fano 2--2}}
\subsubsection*{Mori-Mukai} Double cover of $\PP^1 \times \PP^2$ with branch locus a $(2,4)$ divisor.
\subsubsection*{Our description} $\mZ (\of(0,0,2) \oplus K(0,0,1)) \subset \PP^1 \times \PP^2 \times \PP^{12}$, where $K \in \Ext^2(\of(1,0)^{\oplus 6},\mQ_{\PP^2}(-1,-1))$ fits into sequences \eqref{K2-2}.
\subsubsection*{Identification}
By Lemma \ref{lem:doublecovers}, our Fano variety is the zero locus of $\of(2)$ over $\PP(\of(-1,-2) \oplus \of)$, the latter being a projective bundle on $\PP^1 \times \PP^2$. We need to express such projective bundle as the zero locus of a suitable vector bundle. 

\begin{lemma}
\label{projBundle2-2}
The projective bundle $\PP(\of(-1,-2) \oplus \of)$ can be obtained as the zero locus of $K(0,0,1)$ over $\PP^1 \times \PP^2 \times \PP^{12}$, being $K \in \Ext^2(\of(1,0)^{\oplus 6},\mQ_{\PP^2}(-1,-1))$ a uniquely defined extension on $\PP^1 \times \PP^2$ fitting into sequences \eqref{K2-2} below.
\begin{proof}
Our goal is to write $\of_{\PP^1 \times \PP^2}(-1,-2) \oplus \of_{\PP^1 \times \PP^2}$ as a subbundle of $\of_{\PP^1 \times \PP^2}^{\oplus 13}$. By Remark \ref{rem:principalParts}, we have two (dual) canonical short exact sequences on $\PP^2$
\begin{gather*}
0 \rightarrow \of(-2) \rightarrow \of(-1)^{\oplus 3} \rightarrow \mQ(-1) \rightarrow 0,\\
0 \rightarrow \of(-1)^{\oplus 3} \rightarrow \of^{\oplus 6} \rightarrow \Sym^2 \mQ \rightarrow 0.
\end{gather*}
These fit as the first row and middle column of the exact diagram on $\PP^2$ here below, which can be completed by the snake lemma as
\begin{equation}
\label{snake2-2}
\begin{gathered}
\xymatrix{
& 0 \ar[d] & 0 \ar[d] & 0 \ar[d]\\
0 \ar[r] & \of(-2) \ar[d]^-= \ar[r] & \of(-1)^{\oplus 3} \ar[d] \ar[r] & \mQ(-1) \ar[d] \ar[r] & 0 \\
0 \ar[r] & \of(-2)  \ar[d] \ar[r] & \of^{\oplus 6}  \ar[d] \ar[r] & \Lambda \ar[d]  \ar[r] & 0 \\
0 \ar[r] & 0 \ar[r]\ar[d] & \Sym^2 \mQ \ar[r]^-= \ar[d] & \Sym^2 \mQ \ar[r]\ar[d] & 0\\
& 0 & 0 & 0 
}
\end{gathered}
\end{equation}
for a uniquely determined homogeneous vector bundle $\Lambda$ of rank $5$. The last column describes $\Lambda$ as a non-split extension
\begin{equation}
\label{LambdaProvv}
0 \rightarrow \mQ_{\PP^2}(-1) \rightarrow \Lambda \rightarrow \Sym^2 \mQ_{\PP^2} \rightarrow 0.
\end{equation}

We can pull back the middle row of \eqref{snake2-2} on $\PP^1 \times \PP^2$ and twist it by $\of(-1,0)$. This and the standard (pulled back) Euler sequence on $\PP^1$ can be inserted as the first row and second column in the exact diagram below, which can be again completed by the snake lemma as
\begin{equation}
\label{KSnake}
\begin{gathered}
\xymatrix{
& 0 \ar[d] & 0 \ar[d] & 0 \ar[d]\\
0 \ar[r] & \of(-1,-2) \ar[d]^-= \ar[r] & \of(-1,0)^{\oplus 6} \ar[d] \ar[r] & \Lambda(-1,0) \ar[d] \ar[r] & 0 \\
0 \ar[r] & \of(-1,-2)  \ar[d] \ar[r] & \of^{\oplus 12}  \ar[d] \ar[r] & K \ar[d]  \ar[r] & 0 \\
0 \ar[r] & 0 \ar[r]\ar[d] & \of(1,0)^{\oplus 6} \ar[r]^-= \ar[d] & \of(1,0)^{\oplus 6} \ar[r]\ar[d] & 0 \\
& 0 & 0 & 0 \\
}
\end{gathered}
\end{equation}
for a uniquely determined homogeneous vector bundle $K$ of rank $11$. We can further describe $K$ as an element of $\Ext^2(\of(1,0)^{\oplus 6},\mQ_{\PP^2}(-1,-1))$ obtained by combining the short exact sequences
\begin{equation}
\begin{gathered}
\label{K2-2}
0 \rightarrow \mQ_{\PP^2}(-1,-1) \rightarrow \Lambda(-1,0) \rightarrow \Sym^2 \mQ_{\PP^2}(-1,0) \rightarrow 0,
\\
0 \rightarrow \Lambda(-1,0) \rightarrow K \rightarrow \of(1,0)^{\oplus 6} \rightarrow 0.
\end{gathered}
\end{equation}

The bundle $K$ is homogeneous, not completely reducible and globally generated, and its space of global sections coincides with $H^0(\PP^1 \times \PP^2, \of(1,0)^{\oplus 6})$. Adding the middle row of \eqref{KSnake} to $\of \rightarrow \of$, we get
\[
0 \rightarrow \of(-1,-2)\oplus \of \rightarrow \of^{\oplus 13} \rightarrow K \rightarrow 0,
\]
whence the conclusion of the lemma.
\end{proof}
\end{lemma}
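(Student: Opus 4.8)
The plan is to realise $\of(-1,-2)\oplus\of$ as a subbundle of the trivial bundle $\of^{\oplus 13}$ on $\PP^1\times\PP^2$ with locally free quotient $K$, and then to invoke the general principle recorded immediately after Lemma~\ref{lem:blow}: from a short exact sequence $0\to E\to\of^{\oplus 13}\to K\to 0$ the projectivisation $\PP(E)$ is cut out inside $(\PP^1\times\PP^2)\times\PP^{12}$ as the zero locus of the canonical section of $K(0,0,1)$ determined by the quotient map $\of^{\oplus 13}\to K$. Thus the entire content of the lemma reduces to producing the sequence $0\to\of(-1,-2)\oplus\of\to\of^{\oplus 13}\to K\to 0$, after which the identification $\PP(\of(-1,-2)\oplus\of)\cong\mZ(K(0,0,1))$ is immediate.

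First I would repeat the construction of Lemma~\ref{projBundle1-12}, but on $\PP^2$ instead of $\PP^3$. Remark~\ref{rem:principalParts} supplies the two dual principal-parts sequences
\[
0\to\of(-2)\to\of(-1)^{\oplus 3}\to\mQ(-1)\to 0,\qquad 0\to\of(-1)^{\oplus 3}\to\of^{\oplus 6}\to\Sym^2\mQ\to 0
\]
on $\PP^2$. Taking the first as top row and the second as central column of a $3\times 3$ diagram and completing by the snake lemma yields a rank-five homogeneous bundle $\Lambda$ sitting in $0\to\mQ_{\PP^2}(-1)\to\Lambda\to\Sym^2\mQ_{\PP^2}\to 0$, together with the middle row $0\to\of(-2)\to\of^{\oplus 6}\to\Lambda\to 0$.

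Next I would pass to $\PP^1\times\PP^2$ and build $K$ by a second snake-lemma step. Pulling back the middle row above and twisting by $\of(-1,0)$ gives $0\to\of(-1,-2)\to\of(-1,0)^{\oplus 6}\to\Lambda(-1,0)\to 0$, which I take as the top row; as central column I take six pulled-back copies of the Euler sequence of $\PP^1$, namely $0\to\of(-1,0)^{\oplus 6}\to\of^{\oplus 12}\to\of(1,0)^{\oplus 6}\to 0$. Since the two share the summand $\of(-1,0)^{\oplus 6}$, the snake lemma produces a rank-eleven bundle $K$ with last column $0\to\Lambda(-1,0)\to K\to\of(1,0)^{\oplus 6}\to 0$ and middle row $0\to\of(-1,-2)\to\of^{\oplus 12}\to K\to 0$; twisting the defining sequence of $\Lambda$ by $\of(-1,0)$ gives the companion $0\to\mQ_{\PP^2}(-1,-1)\to\Lambda(-1,0)\to\Sym^2\mQ_{\PP^2}(-1,0)\to 0$, so that the two together are the sequences \eqref{K2-2} of the statement and exhibit $K$ as the claimed length-two extension in $\Ext^2(\of(1,0)^{\oplus 6},\mQ_{\PP^2}(-1,-1))$. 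Because every step is a snake-lemma completion of fixed sequences, $K$ is canonical and hence unique. Finally, adding the identity row $\of\to\of$ to the middle row produces $0\to\of(-1,-2)\oplus\of\to\of^{\oplus 13}\to K\to 0$, and the general principle concludes the argument.

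I expect the main obstacle to be diagrammatic rather than conceptual: setting up the second $3\times 3$ diagram on the product so that, after the twist by $\of(-1,0)$, the shared summand $\of(-1,0)^{\oplus 6}$ of its top row and central column coincides on the nose, and checking that all the structure maps remain fibrewise injective, so that $\Lambda$ and $K$ are honest vector bundles of ranks five and eleven rather than merely coherent sheaves. Notably, no genericity is needed for the identification itself, since the section of $K(0,0,1)$ at play is the canonical one attached to the surjection $\of^{\oplus 13}\to K$; genericity enters only afterwards, when this section is paired with the $\of(0,0,2)$ factor to cut out the double cover defining the Fano.
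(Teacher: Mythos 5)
Your proposal is correct and follows essentially the same route as the paper's proof: the same two principal-parts sequences on $\PP^2$, the same two snake-lemma completions producing first $\Lambda$ and then $K$ together with the sequences \eqref{K2-2}, and the same final step of adding $\of \rightarrow \of$ to obtain $0 \rightarrow \of(-1,-2)\oplus\of \rightarrow \of^{\oplus 13} \rightarrow K \rightarrow 0$ and invoking the projectivisation principle stated after Lemma \ref{lem:blow}. Your closing remark on genericity is also consistent with the paper, which justifies passing to a general section via the vanishing $H^1(\of(-1,-2)\oplus\of)=0$ noted in that same principle.
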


By construction, the bundle $\of(2)$ on $\PP(\of(-1,-2) \oplus \of)$ is identified with $\of(0,0,2)$ over the zero locus of $K$ on $\PP^1 \times \PP^2 \times \PP^{12}$, whence the conclusion.

\hypertarget{Fano2--3}{\subsection*{Fano 2--3}}
\subsubsection*{Mori-Mukai} Blow up of \hyperlink{Fano1--12}{1--12} in an elliptic curve which is the intersection of two divisors from $|-\frac{1}{2}K|$.
\subsubsection*{Our description} $\mZ (\of(4,0) \oplus \of(1,1)) \subset \PP(1,1,1,1,2) \times \PP^1$.

\subsubsection*{Identification} The first bundle on $\PP(1,1,1,1,2)$ gives \hyperlink{Fano1--12}{1--12}. We can conclude by Lemma \ref{lem:blowup}.

It is possible to provide a rather simple description involving only projective spaces. To do this, recall that a model for \hyperlink{Fano1--12}{1--12} is $\mZ(\Lambda(0,1) \oplus \of(0,2)) \subset \PP^3 \times \PP^{10}$. The adjunction formula shows that the canonical divisor is the restriction of $\of(-2,0)$; by Lemma \ref{lem:blowup}, a model for \hyperlink{Fano2--3}{2--3} is therefore given by
\[
\mZ(\Lambda(0,1,0) \oplus \of(0,2,0) \oplus \of(1,0,1)) \subset \PP^3 \times \PP^{10} \times \PP^1.
\]

\hypertarget{Fano2--5}{\subsection*{Fano 2--5}}
\subsubsection*{Mori-Mukai} Blow up of 1--13 in a plane cubic.
\subsubsection*{Our description} $\mZ(\of(0,3) \oplus \of(1,1)) \subset \PP^1 \times \PP^4$.
\subsubsection*{Identification} The first bundle on $\PP^4$ gives 1--13. We conclude by Lemma \ref{lem:blowup}.

\hypertarget{Fano2--8}{\subsection*{Fano 2--8}}
\subsubsection*{Mori-Mukai} Double cover of \hyperlink{Fano2--35}{2--35} with branch locus an anticanonical divisor such that the intersection with the exceptional divisor is smooth.

\subsubsection*{Our description} $\mZ(\Lambda(0,0,1) \oplus \of(0,0,2)) \subset \PP^2 \times \PP^3 \times \PP^{12}$, being $\Lambda \in \Ext^1(\mQ_{\PP^3}^{\oplus 3},\mQ_{\PP^2}(0,-1))$ a uniquely defined extension on $\PP^2 \times \PP^3$ fitting into sequence \eqref{Lambda2-8} below.
\subsubsection*{Identification}
As shown below, $Y:={}$ \hyperlink{Fano2--35}{2--35} can be obtained as $\mZ(\mQ_{\PP^2}(0,1))  \subset \PP^2 \times \PP^3$. By Lemma \ref{lem:doublecovers}, our Fano variety is the zero locus of $\of(2)$ on $\PP_Y(\of(-1,-1) \oplus \of)$.

As it turns out, the projective bundle $\PP_{\PP^2 \times \PP^3}(\of(-1,-1) \oplus \of)$ can be obtained as the zero locus of $\Lambda(0,0,1)$ over $\PP^2 \times \PP^3 \times \PP^{12}$, being $\Lambda \in \Ext^1(\mQ_{\PP^3}^{\oplus 3},\mQ_{\PP^2}(0,-1))$ a uniquely defined extension on $\PP^2 \times \PP^3$ fitting into sequence \eqref{Lambda2-8} below. To see it, we can argue as in Lemma \ref{projBundle1-12} or Lemma \ref{projBundle2-2}: we combine the (pull back of the) two (possibly twisted) Euler sequences
\begin{gather*}
0 \rightarrow \of(-1,-1) \rightarrow \of(0,-1)^{\oplus 3} \rightarrow \mQ_{\PP^2}(0,-1) \rightarrow 0,\\
0 \rightarrow \of(0,-1)^{\oplus 3} \rightarrow \of^{\oplus 12} \rightarrow \mQ_{\PP^3}^{\oplus 3} \rightarrow 0.
\end{gather*}
We get
\begin{gather}
\label{inclusion2-8}
0 \rightarrow \of(-1,-1) \rightarrow \of^{\oplus 12} \rightarrow \Lambda \rightarrow 0,
\\
\label{Lambda2-8}
0 \rightarrow \mQ_{\PP^2}(0,-1) \rightarrow \Lambda \rightarrow \mQ_{\PP^3}^{\oplus 3} \rightarrow 0,
\end{gather}
where the rank $11$ bundle $\Lambda$ is  homogeneous, not completely reducible and globally generated, and its space of global sections coincides with $H^0(\PP^3, 
\mQ^{\oplus 3}) \cong (V_4)^{\oplus 3}$. Adding $\of \rightarrow \of$ to \eqref{inclusion2-8} we get the desired description for $\PP_{\PP^2 \times \PP^3}(\of(-1,-1) \oplus \of)$ and the conclusion.

\hypertarget{Fano2--10}{\subsection*{Fano 2--10}}
\subsubsection*{Mori-Mukai} Blow up of 1--14 in an elliptic curve which is an intersection of 2 hyperplanes.
\subsubsection*{Our description} $\mZ(\of(2,0) \oplus \of(1,1))\subset \Gr(2,4) \times \PP^1$.
\subsubsection*{Identification} See Lemma \ref{lem:blowup}.

\hypertarget{Fano2--11}{\subsection*{Fano 2--11}}
\subsubsection*{Mori-Mukai} Blow up of 1--13 in a line.
\subsubsection*{Our description} $\mZ(\mQ_{\PP^2}(0,1) \oplus \of(1,2)) \subset \PP^2 \times \PP^4$.
\subsubsection*{Identification} By Lemma \ref{lem:blow}, the zero locus of the first summand gives $\Bl_{\PP^1}(\PP^4)$. Let $\PP^4=\PP(V_5)$ with dual coordinates $x_0, \dotsc, x_4 \in V_5^\vee$. Assume that $\PP^1=\PP(V_2)$ is given by the vanishing of $x_2, \dotsc, x_4$. A general section in $H^0(\PP^2 \times \PP^4,\of(1,2))$ is identified with a cubic in $\Sym^3(V_5^\vee)/\Sym^3(V_2^\vee)$, i.e., a cubic without terms in $x_0^3, x_0^2x_1, x_0x_1^2,x_1^3$. Such cubic contains $\PP(V_2)$, hence the claim.
Notice that, using the equivalent Corollary \ref{cor:blowupflag}, we can describe \hyperlink{Fano2--11}{2--11} as well as the zero locus $\mZ(\mQ_2^{\oplus 2} \oplus \of(2,1)) \subset \Fl(1,3,5)$.

\hypertarget{Fano2--15}{\subsection*{Fano 2--15}}
\subsubsection*{Mori-Mukai}Blow up of $\PP^3$ in the intersection of a quadric and a cubic where the quadric is smooth.
\subsubsection*{Our description} $\mZ(\mQ_{\PP^3}(0,1) \oplus \of(2,1)) \subset \PP^3 \times \PP^4$.
\subsubsection*{Identification}

By Lemma \ref{lem:blowDegeneracyLocus}, our Fano is the zero locus of $\of(1) \otimes \pi^*\of(2)$ over $\pi:\PP(\of(-1)\oplus \of) \rightarrow \PP^3$.

Adding $\of \rightarrow \of$ to the standard Euler sequence on $\PP^3$ we get
\[
0 \rightarrow
\of(-1) \oplus \of \rightarrow
\of^{\oplus 5} \rightarrow
\mQ \rightarrow
0,
\]
whence the result.

Another simple description of our Fano is
\begin{equation}
\label{anotherDescription}
\mZ(\mQ_2 \oplus \of(1,2)) \subset \Fl(1,2,5);
\end{equation}
the two descriptions are equivalent thanks to the correspondence between subvarieties of flags and of products of Grassmannians given by Lemma \ref{lem:identificationsOnFlags}, Remark \ref{rmk:wisniewski}, and Lemma \ref{lem:blow}. From these one can immediately identify $(\mZ(\mQ_2) \subset \Fl(1,2,V_5)) \cong \PP_{\PP(V_5/v_0)}(\of(-1) \oplus (v_0 \otimes\of)) \cong \Bl_{\PP(v_0)} \PP(V_5)$ as $\mZ(\mQ_{\PP^3}\boxtimes \of_{\PP^4}(1)) \subset \PP(V_5/v_0) \times \PP((V/v_0) \oplus v_0)$. On the latter we have that $\of(1,0) \cong p^*\of_{\PP^3}(1)$ and $\of(0,1) \cong \pi^*\of_{\PP^4}(1)$, where $p$ is the projective bundle map and $\pi$ the blow up map.

We want to provide a direct way to describe our Fano as \eqref{anotherDescription}, in order to show how the Cayley trick can be effectively used. First note that by Corollary \ref{cor:onF12n} $X=\mZ(\mQ_2) \subset F:=\Fl(1,2,5)$ is identified with $\PP_{\PP^3}(\of(-1) \oplus \of) \cong \PP_{\PP^3}(\of(-3) \oplus \of(-2)) = \PP(E)$, with $E:=\of_{\PP^3}(-3) \oplus \of_{\PP^3}(-2)$. To use Corollary \ref{cor:cayleycrit} we want to show that 
\[
H^0(X, \of_X(1,2)) \cong H^0(\PP(E), \of_{\PP(E)}(1)) \cong H^0(\PP^3, \of_{\PP^3}(2) \oplus \of_{\PP^3}(3)).
\]
In order to compute $H^0(X, \of_X(1,2))$ we use the Koszul complex for $X \subset F$ twisted by $\of_{F}(1,2)$.  The only non-zero cohomology groups are
\[
\begin{array}{ll}
H^0(F, \W^3 \mQ^{\vee}_2 \otimes \of_F(1,2)) \cong \Schur_{2,2,2,1}V_5 \cong \C^{40},&
H^0(F, \W^2 \mQ^{\vee}_2 \otimes \of_F(1,2)) \cong \Schur_{3,2,2,1}V_5 \cong \C^{175}\\
\rule{0pt}{12pt}
H^0(F, \mQ^{\vee}_2 \otimes \of_F(1,2)) \cong \Schur_{3,3,2,1}V_5 \cong \C^{280},&
H^0(F,  \of_F(1,2)) \cong \Schur_{3,3,3,1}V_5 \cong \C^{175}.
\end{array}
\]
As in Lemma \ref{lem:identificationsOnFlags}, $\mU_2|_X= \overline{\mU_1} \oplus \of$: this is therefore equivalent to split $V_5 = V_4 \oplus \C v_0$, and apply the above Schur functors to a such decomposed $V_5$ to get $\SL(4)\times \C^*$ representations, with the $\C^*$ component being the trivial representation. As it turns out,
\[
\begin{array}{rcl}
\Schur_{2,2,2,1}(V_4 \oplus \C) & = & \Schur_{2,2,1}V_4\oplus\Schur_{2,2,2}V_4\oplus\Schur_{2,2,1,1}V_4\oplus\Schur_{2,2,2,1}V_4, \\
\rule{0pt}{12pt} \Schur_{3,2,2,1}(V_4 \oplus \C) & = & \Schur_{3,2,2,1}V_4\oplus\Schur_{3,2,2}V_4\oplus\Schur_{3,2,1,1}V_4\oplus\Schur_{3,2,1}V_4\oplus\Schur_{2,2,2,1}V_4\oplus\Schur_{2,2,2}V_4\oplus\\ & & {}\oplus \Schur_{2,2,1,1}V_4\oplus\Schur_{2,2,1}V_4,\\
\rule{0pt}{12pt} \Schur_{3,3,2,1}(V_4 \oplus \C)& = &\Schur_{3,3,2,1}V_4\oplus\Schur_{3,3,2}V_4\oplus\Schur_{3,3,1,1}V_4\oplus\Schur_{3,3,1}V_4\oplus\Schur_{3,2,2,1}V_4\oplus  \Schur_{3
      ,2,2}V_4\oplus\\ & &{}\oplus\Schur_{3,2,1,1}V_4\oplus\Schur_{3,2,1}V_4,\\
\rule{0pt}{12pt} \Schur_{3,3,3,1}(V_4 \oplus \C)& = &\Schur_{3,3,3,1}V_4\oplus\Schur_{3,3,3}V_4\oplus\Schur_{3,3,2,1}V_4\oplus\Schur_{3,3,2}V_4\oplus\Schur_{3,3,1,1}V_4\oplus\Schur_{3
      ,3,1}V_4.
\end{array}
\]

Therefore, splitting the Koszul complex in short exact sequences, we get the natural isomorphism (of vector spaces)
\[H^0(X, \of_X(1,2)) \cong \Schur_{2,2,2}V_4 \oplus \Schur_{3,3,3}V_4 \cong \Sym^2 V_4^{\vee} \oplus \Sym^3 V_4^{\vee} \cong H^0(\PP^3, \of_{\PP^3}(2) \oplus \of_{\PP^3}(3)),\]
as claimed. It suffices to use Corollary \ref{cor:cayleycrit} to show that $X$ coincides with the Mori--Mukai description as the blow up of $\PP^3$ in the complete intersection of a quadric and a cubic surfaces.

\hypertarget{Fano2--16}{\subsection*{Fano 2--16}}
\subsubsection*{Mori-Mukai} Blow up of 1--14 in a conic.
\subsubsection*{Our description} $\mZ(\of(1,0) \oplus \of(0,2)) \subset \Fl(1,2,4)$.
\subsubsection*{Identification} Let $Y=\mZ(\of_F(0,2)) \subset \Fl(1,2,4)$ and $\tY= \mZ(\of_G(2)) \subset \Gr(2,4)$. One directly checks that \[H^0(Y, \of_Y(1,0)) \cong H^0(\tY, \mU^{\vee}|_{\tY}).\]
In fact, both spaces can be naturally identified with $V_4^{\vee}$, as in \hyperlink{Fano2--15}{2--15}. Then it suffices to apply Corollary \ref{cor:cayleycrit} to get that $X=\mZ(\of_Y(1,0)) \subset Y \cong \Bl_{\mZ( \mU^{\vee}|_{\tY})}\tY$, where we used that by duality on $\Gr(2,4)$, $\mU(1) \cong \mU^{\vee} \cong (\pi_2)_{*} \of_F(1,0)$. We conclude the proof by noting that $\tY$ is a complete intersection of two quadrics in $\PP^5$, and $(\mZ( \mU^{\vee}|_{\tY}) \subset \tY) = \mZ(\mU^{\vee} \oplus \of_G(2)) \subset \Gr(2,4)$ which is a plane conic.

We want to give an alternative description of this Fano in the product of two Grassmannians. For this, let us start by the Mori--Mukai description. Lemma \ref{lem:blowInGrass} enables us to describe $\Bl_{\PP^2} \Gr(2,4)$ in the product $(\PP^2)^{\vee} \times \Gr(2,4)$. We then need to cut with an extra quadric intersecting the blown up $\PP^2$. As we are going to see in full details for \hyperlink{Fano2--26}{2--26}, for this it suffices to take a section of $\of(0,2)$. Summarising, we can describe our \hyperlink{Fano2--16}{2--16} as 
\[ \mZ(\mU^{\vee}_{\Gr(2,4)}(1,0) \oplus \of(0,2)) \subset \PP^2 \times \Gr(2,4).
\]
\hypertarget{Fano2--17}{\subsection*{Fano 2--17}}
\subsubsection*{Mori-Mukai}  Blow up of $\mathbb{Q}_3$ in an elliptic curve of degree 5.
\subsubsection*{Our description} $\mZ(\of(0,1) \oplus \of(1,1)) \subset \Fl(1,2,4)$.
\subsubsection*{Identification}
A model for this 3-fold in $\Gr(2,4) \times \PP^3$ can be found in \cite{corti}. As an exception to our self-imposed rule, we want to give here an alternative description in a flag variety, since we find it particularly nice. Let us show that our Fano can be written as
\[
\mZ(\of(1,1) \oplus \of(0,1)) \subset \Fl(1,2,4).
\]

 As before, we check that \[H^0(Y, \of_Y(1,1)) \cong H^0(\tY, \oU^{\vee}(1)|_{\tY}),\]
where we are using the same notation as above. These spaces are both $16$-dimensional and isomorphic as vector spaces to $\Schur_{2,1}V_4^{\vee}/V_4^{\vee}$, where we can interpret $\Schur_{2,1}V_4$ as the kernel of the natural contraction map $\contr: V_4 \otimes \W^2 V_4^{\vee} \rightarrow V_4^{\vee}$. These spaces of sections are not $\SL(V_4)$-representations: in fact $\tY$ (and similarly for the section on the flag) is not homogeneous for the whole group, but rather for $\SO(V_3)$, and one could write a more elegant expression for the spaces of section as in \hyperlink{Fano2--15}{2--15}. To conclude we apply Corollary \ref{cor:cayleycrit}: we have that $X=\mZ(\of_Y(1,1)) \cong \Bl_{\tZ} \tY$ where $\tY$ is a quadric 3-fold, and the centre of the blow up is $\tZ= \mZ(\mU^{\vee}(1) \oplus \of(1)) \subset \Gr(2,4)$, which can be easily checked to be an elliptic curve of degree 5.

\hypertarget{Fano2--18}{\subsection*{Fano 2--18}}
\subsubsection*{Mori-Mukai} Double cover of 2-34 with branch locus a divisor of degree $(2,2)$.
\subsubsection*{Our description} $\mZ(\Lambda(0,0,1) \oplus \of(0,0,2)) \subset \PP^1 \times \PP^2 \times \PP^6$, being $\Lambda \in \Ext^1(\mQ_{\PP^2}^{\oplus 2},\of(1,-1))$ a uniquely defined extension on $\PP^1 \times \PP^2$ fitting into sequence \eqref{Lambda2-18} below.

\subsubsection*{Identification}

By Lemma \ref{lem:doublecovers}, our Fano variety is the zero locus of $\of(2)$ on $\PP_{\PP^1 \times \PP^2}(\of(-1,-1) \oplus \of)$. As it turns out, the latter projective bundle can be obtained as the zero locus of $\Lambda(0,0,1)$ over $\PP^1 \times \PP^2 \times \PP^6$, being $\Lambda \in \Ext^1(\mQ_{\PP^2}^{\oplus 2},\of(1,-1))$ a uniquely defined extension on $\PP^1 \times \PP^2$ fitting into sequence \eqref{Lambda2-18} below. To see it, we can argue as in Lemma \ref{projBundle1-12} or Lemma \ref{projBundle2-2}: we combine the (pull back of the) two (possibly twisted) Euler sequences
\begin{gather*}
0 \rightarrow \of(-1,-1) \rightarrow \of(0,-1)^{\oplus 2} \rightarrow \of(1,-1) \rightarrow 0,\\
0 \rightarrow \of(0,-1)^{\oplus 2} \rightarrow \of^{\oplus 6} \rightarrow \mQ_{\PP^2}^{\oplus 2} \rightarrow 0.
\end{gather*}
We get
\begin{gather}
\label{inclusion2-18}
0 \rightarrow \of(-1,-1) \rightarrow \of^{\oplus 6} \rightarrow \Lambda \rightarrow 0,
\\
\label{Lambda2-18}
0 \rightarrow \of(1,-1) \rightarrow \Lambda \rightarrow \mQ_{\PP^2}^{\oplus 2} \rightarrow 0,
\end{gather}
where the rank $5$ bundle $\Lambda$ is homogeneous, not completely reducible and globally generated, and its space of global sections coincides with $H^0(\PP^2, 
\mQ^{\oplus 2}) \cong (V_3)^{\oplus 2}$. Adding $\of \rightarrow \of$ to \eqref{inclusion2-18} we get the desired description for $\PP_{\PP^1 \times \PP^2}(\of(-1,-1) \oplus \of)$ and the conclusion.

\hypertarget{Fano2--19}{\subsection*{Fano 2--19}}
\subsubsection*{Mori-Mukai} Blow up of 1--14 in a line.
\subsubsection*{Our description}  $\mZ(\mQ_{\PP^3}(0,1) \oplus \of(1,1)^{\oplus 2} ) \subset \PP^3 \times \PP^5$. 

\subsubsection*{Identification} It suffices to apply Lemma \ref{lem:blow} and argue as done for \hyperlink{Fano2--11}{2--11}. The zero locus of the first factor identifies $\mZ(\mQ_{\PP^3}(0,1))$ with the blow up $\Bl_{\PP^1}(\PP^5)$. Let $\PP^5=\PP(V_6)$ with dual coordinates $x_0, \dotsc, x_5 \in V_6^\vee$. Assume that $\PP^1=\PP(V_2)$ is given by the vanishing of $x_2, \dotsc, x_5$. A general section in $H^0(\PP^3 \times \PP^5,\of(1,1)^2)$ is identified with two quadrics in $\Sym^2(V_6^\vee)/\Sym^2(V_2^\vee)$, i.e., quadrics without terms in $x_0^2, x_1^2, x_0x_1$. Such quadrics have generically maximal rank, so their intersection is smooth and contains $\PP(V_2)$, hence the claim.
Notice that, using the equivalent Corollary \ref{cor:blowupflag}, we can describe \hyperlink{Fano2--19}{2--19} as the zero locus of 
$\mZ(\mQ_2^{\oplus 2} \oplus \of(1,1)^{\oplus 2}) \subset \Fl(1,3,6)$ as well.

\hypertarget{Fano2--22}{\subsection*{Fano 2--22}}
\subsubsection*{Mori-Mukai} Blow up of $\mathbb{V}_5$ in a conic.
\subsubsection*{Our description} $\mZ(\mQ_{\Gr(2,5)}(1,0) \oplus \of(0,1)^{\oplus 3}) \subset \PP^3 \times \Gr(2,5).$
\subsubsection*{Identification} In \cite{corti} this variety is described as $\mZ(\of(1,0) \oplus \of(0,1)^{\oplus 3}) \subset \Fl(1,2,5)$. This description is equivalent to the one given here simply applying Lemma \ref{lem:blowInGrass} with $k=3$ (where we identify $\Gr(3,4)$ and $\Gr(3,5)$ with $\PP^3$ and $\Gr(2,5)$). The three residual sections of $\of(0,1)$ cut both $\Gr(2,5)$ (in $\mathbb{V}_5$) and $\Gr(2,4)$ (in a conic).

\hypertarget{Fano2--23}{\subsection*{Fano 2--23}}
\subsubsection*{Mori-Mukai}  Blow up of $\mathbb{Q}_3$ in an intersection of a hyperplane and a quadric.
\subsubsection*{Our description} $\mZ(\mQ_2 \oplus \of(1,1) \oplus \of(0,2)) \subset \Fl(1,2,6)$.

\subsubsection*{Identification} We apply Corollary \ref{cor:cayleycrit}. In the notation of the corollary, we denote by $Y \subset \Fl(1,2,6)$ the zero locus of $\mQ_2 \oplus \of(0,2)$. We identify $\widetilde{Y}$ with a three dimensional quadric $Q \subset \PP^4$. What we need to check is \[H^0(Y, \of_Y(1,1)) \cong H^0(Q, \of_Q(1)) \oplus H^0(Q, \of_Q(2)). \]

To verify this, one can argue as for \hyperlink{Fano2--15}{2--15}: one can compute the $\SL(V_6)$-representations arising from the Koszul complex resolving $\of_Y(1,1)$. These representations, when seen as $\SL(V_5)\times \C^*$-representations under the splitting $V_6 = V_5 \oplus \C v_0$, sum up to $\Sigma_{1,1,1,1}V_5 \oplus \Sigma_{2,2,2,2}V_5/ \C$, which is clearly isomorphic to the right hand side.

Therefore $X \cong \Bl_{\widetilde{Z}} Q$, where $\widetilde{Z}$ is given by the intersection of a quadratic and linear forms in $Q$.

We provide the following alternative description for this Fano:
\[ \mZ(\mQ_{\PP^4}(0,1) \oplus \of(2,0) \oplus \of(1,1)) \subset \PP^4 \times \PP^5,
\]
which can be shown to be equivalent to the previous one following the same lines of \hyperlink{Fano2--15}{2--15}.

\hypertarget{Fano2--26}{\subsection*{Fano 2--26}}
\subsubsection*{Mori-Mukai} Blow up of $\mathbb{V}_5$ in a curve of genus 0.
\subsubsection*{Our description} $\mZ(\mQ_{2,4} \boxtimes \mU^{\vee}_{2,5} \oplus \of(1,0) \oplus \of(0,1)^{\oplus 2}) \subset \Gr(2,4) \times \Gr(2,5).$

\subsubsection*{Identification} 
By Lemma \ref{lem:blowInGrass} we can identify $\mZ(\mQ_{2,4} \boxtimes \mU^{\vee}_{2,5}) \subset \Gr(2,4) \times \Gr(2,5)$ as the blow up $\Bl_{\PP^3} \Gr(2,5)$, where $\PP^3$ is identified with $\mZ(\mQ) \subset \Gr(2,5) \cong \Gr(2,V_5)$, given by a vector $w \in V_5^\vee$.

Without loss of generality, we can assume $w=x_0$. We have a splitting $V_5^{\vee}= x_0 \oplus W_4$ that induces a splitting $\W^2 V_5^{\vee} = \W^2 W_4 \oplus x_0 \wedge W_4$. For simplicity, let us fix a basis $x_0,\dotsc,x_4$ of $V_5^{\vee}$ and the corresponding dual basis $e_0,\dotsc,e_4$ of $V_5$. The above $\PP^3$ is by definition described by the points in $\Gr(2,5)$ of the form $e_0 \wedge \alpha$, where $\alpha \in \langle e_1,\dotsc,e_4 \rangle$.

By construction, any $f \in \W^2 W_4=|\of(1,0)|$ does not contain any summand of the form $x_0 \wedge \beta$, so that $f(e_0 \wedge \alpha)=0$. In other words, $f \in \Ann(\PP^3)$, hence its zero locus in $\Bl_{\PP^3} \Gr(2,5)$ contains the whole $\PP^3$ and does not cut it. The two extra sections of $\of(0,1)$ cut $\PP^3$ in a codimension two linear subspace. Therefore our zero locus can be seen as the blow up of $\mZ(\of_{\Gr(2,V_5)}^{\oplus 3}(1))\subset \Gr(2,V_5)$ along $\PP^1 \cong \mZ(\of_{\PP^3}^{\oplus 2}(1))\subset \PP^3$.

Another description of this Fano is as $\mZ(\mU_1^{\vee} \oplus \of(1,0) \oplus \of(0,1)^{\oplus 2}) \subset \Fl(2,3,5)$. By Lemma \ref{lem:identificationsOnFlags} this can be easily identified with the alternative description of this Fano given in \cite{corti}.

\hypertarget{Fano2--28}{\subsection*{Fano 2--28}}
\subsubsection*{Mori-Mukai} Blow up of $\PP^3$ in a plane cubic.
\subsubsection*{Our description} 
$\mZ(\Lambda(0,1) \oplus \of(1,1)) \subset \PP^3 \times \PP^{10}$, being $\Lambda \in \Ext^1(\Sym^2 \mQ,\mQ(-1))$ a uniquely defined extension on $\PP^3$ fitting into sequence \eqref{Lambda1--12} above.

\subsubsection*{Identification}
Our Fano variety is the blow up of $\PP^3$ along the intersection of two divisors of degree $1$ and $3$. By Lemma \ref{lem:blowDegeneracyLocus}, it corresponds to the zero locus of $\pi^*\of_{\PP^3}(1) \otimes \of(1)$ over the projective bundle $\pi:\PP(\of(-2)\oplus \of)\rightarrow \PP^3$. We conclude by Lemma \ref{projBundle1-12}.

\hypertarget{Fano2--29}{\subsection*{Fano 2--29}}
\subsubsection*{Mori-Mukai} Blow up of $\mathbb{Q}_3$ in a conic.
\subsubsection*{Our description} $\mZ(\of(0,2) \oplus \of(1,1)) \subset \PP^1 \times \PP^4$.

\subsubsection*{Identification} See Lemma \ref{lem:blowup}.

\hypertarget{Fano2--30}{\subsection*{Fano 2--30}}
\subsubsection*{Mori-Mukai} Blow up of $\PP^3$ in a conic.
\subsubsection*{Our description} $\mZ(\mQ_2 \oplus \of(1,1)) \subset \Fl(1,2,5)$.

\subsubsection*{Identification} We apply Corollary \ref{cor:cayleycrit}. Following the notation of the corollary, we denote by $Y \subset \Fl(1,2,5)$ the zero locus of $\mQ_2$ and we identify $\widetilde{Y}$ with a $\PP^3$. What we need to check is \[H^0(Y, \of_Y(1,1)) \cong H^0(\PP^3, \of_{\PP^3}(1)\oplus \of_{\PP^3}(2)). \]

To verify this, one can argue as for \hyperlink{Fano2--15}{2--15} or \hyperlink{Fano2--23}{2--23}: the representations arising from the Koszul complex resolving $\of_Y(1,1)$, when seen as $\SL(V_4)\times \C^*$-representations, sum up to $\Sigma_{1,1,1}V_4 \oplus \Sigma_{2,2,2}V_4$, which is clearly isomorphic to the right hand side.

Notice that as an alternative description we can follow the same lines of \hyperlink{Fano2--15}{2--15} and describe the Fano \hyperlink{Fano2--30}{2--30} as 
\[
\mZ(\mQ_{\PP^3}(0,1) \oplus \of(1,1)) \subset \PP^3 \times \PP^4.
\]
\hypertarget{Fano2--31}{\subsection*{Fano 2--31}}
\subsubsection*{Mori-Mukai} Blow up of $\mathbb{Q}_3$ in a line.

\subsubsection*{Our description} $\mZ( \mU^{\vee}_{\Gr(2,4)}(1,0) \oplus \of(0,1)) \subset \PP^2  \times \Gr(2,4) $.
\subsubsection*{Identification}
We may regard $\PP^2$ as $\Gr(2,3)$, so that our Fano is given as $\mZ(\mQ \boxtimes \mU^{\vee} \oplus \of(0,1))$. Then we argue as for \hyperlink{Fano2--26}{2--26}. By Lemma \ref{lem:blowInGrass} we can identify $\mZ(\mQ \boxtimes \mU^{\vee}) \subset \Gr(2,3) \times \Gr(2,4)$ as $\Bl_{\PP^2} \Gr(2,4)$, where $\PP^2$ is identified with $\mZ(\mQ) \subset \Gr(2,4)$. As shown for \hyperlink{Fano2--26}{2--26}, the remaining section of $\of(0,1)$ cuts such $\PP^2$ in a codimension one linear subspace and the ambient $\Gr(2,4)$ in a three-dimensional quadric, hence the conclusion.

\hypertarget{Fano2--33}{\subsection*{Fano 2--33}}
\subsubsection*{Mori-Mukai} Blow up of $\PP^3$ in a line.
\subsubsection*{Our description} $\mZ(\of(1,1)) \subset \PP^1 \times \PP^3$.
\subsubsection*{Identification} See Lemma \ref{lem:blowup}.

\hypertarget{Fano2--35}{\subsection*{Fano 2--35}}
\subsubsection*{Mori-Mukai} $\Bl_p \PP^3$ or $\PP_{\PP^2}(\of \oplus \of(-1))$.
\subsubsection*{Our description}  $\mZ(\mQ_{\PP^2}(0,1)) \subset \PP^2 \times \PP^3$.
\subsubsection*{Identification}
This is a straightforward application of Lemma \ref{lem:blow}. Notice that equivalently we could describe \hyperlink{Fano2--35}{2--35} as $\mZ(\mQ_2) \subset \Fl(1,2,4)$.

\hypertarget{Fano2--36}{\subsection*{Fano 2--36}}
\subsubsection*{Mori-Mukai} $\PP_{\PP^2}(\of \oplus \of(-2))$.
\subsubsection*{Our description} $\mZ(\Lambda(0,1)) \subset \PP^2 \times \PP^6$, being $\Lambda \in \Ext^1(\Sym^2 \mQ,\mQ(-1))$ a uniquely defined extension on $\PP^2$ fitting into sequence \eqref{Lambda2-36}.

\subsubsection*{Identification}
We argue as in Lemma \ref{projBundle1-12}, with the appropriate changes. By Remark \ref{rem:principalParts}, we have two (dual) canonical short exact sequences on $\PP^2$
\begin{gather*}
0 \rightarrow \of(-2) \rightarrow \of(-1)^{\oplus 3} \rightarrow \mQ(-1) \rightarrow 0,\\
0 \rightarrow \of(-1)^{\oplus 3} \rightarrow  \of^{\oplus 6} \rightarrow \Sym^2 \mQ \rightarrow 0.
\end{gather*}
We combine them and get
\begin{gather}
\label{inclusion2-36}
0 \rightarrow \of(-2) \rightarrow \of^{\oplus 6} \rightarrow \Lambda \rightarrow 0,
\\
\label{Lambda2-36}
0 \rightarrow \mQ(-1) \rightarrow \Lambda \rightarrow \Sym^2 \mQ\rightarrow 0,
\end{gather}
where the rank $5$ bundle $\Lambda$ is homogeneous, not completely reducible and globally generated, and its space of global sections coincides with $H^0(\PP^2, \Sym^2 \mQ_{\PP^2}) \cong \Sym^2 V_3$. Adding $\of \rightarrow \of$ to \eqref{inclusion2-36} we get the desired description for $\PP(\of(-2) \oplus \of)$.

\hypertarget{Fano3--1}{\subsection*{Fano 3--1}}
\subsubsection*{Mori-Mukai} Double cover of $\PP^1 \times \PP^1 \times \PP^1$ with branch locus a divisor of degree $(2,2,2)$.
\subsubsection*{Our description} $\mZ(K(0,0,0,1) \oplus \of(0,0,0,2)) \subset \PP^1 \times \PP^1 \times \PP^1 \times \PP^8$, where the bundle $K$ is a uniquely defined extension in $\Ext^2(\of(0,0,1)^{\oplus 4},\of(1,-1,-1))$ on $\PP^1 \times \PP^1 \times \PP^1$ fitting into the chain of extensions \eqref{K3-1} below.

\subsubsection*{Identification}
By Lemma \ref{lem:doublecovers}, our Fano variety is the zero locus of $\of(2)$ on the projective bundle $\PP_{\PP^1\times\PP^1\times \PP^1}(\of(-1,-1,-1) \oplus \of)$. As it turns out, the latter projective bundle can be obtained as the zero locus of $K(0,0,0,1)$ over $\PP^1 \times \PP^1 \times \PP^1 \times \PP^8$, being $K \in \Ext^2(\of(0,0,1)^{\oplus 4},\of(1,-1,-1))$ a uniquely defined extension on $\PP^1 \times \PP^1 \times \PP^1$ fitting into \eqref{K3-1} below. 

To see it, we can argue as in Lemma \ref{projBundle1-12} or Lemma \ref{projBundle2-2}: we combine the (pull back of the) three (possibly twisted) Euler sequences
\begin{gather*}
0 \rightarrow \of(-1,-1,-1) \rightarrow \of(0,-1,-1)^{\oplus 2} \rightarrow \of(1,-1,-1) \rightarrow 0,\\
0 \rightarrow \of(0,-1,-1)^{\oplus 2} \rightarrow \of(0,0,-1)^{\oplus 4} \rightarrow \of(0,1,-1)^{\oplus 2} \rightarrow 0,\\
0 \rightarrow \of(0,0,-1)^{\oplus 4} \rightarrow \of^{\oplus 8} \rightarrow \of(0,0,1)^{\oplus 4} \rightarrow 0.
\end{gather*}
We get
\begin{equation}
\label{Inclusion3-1}
0 \rightarrow \of(-1,-1,-1) \rightarrow \of^{\oplus 8} \rightarrow K \rightarrow 0,
\end{equation}
where the rank $7$ bundle $K$, fitting into the chain of extension \eqref{K3-1}, is homogeneous, not completely reducible and globally generated, and its space of global sections coincides with $H^0(\PP^1\times\PP^1 \times \PP^1, 
\of(0,0,1)^{\oplus 4}) \cong (V_2)^{\oplus 4}$. 
\begin{equation}
\begin{gathered}
\label{K3-1}
0 \rightarrow \of(1,-1,-1) 
\rightarrow \Lambda \rightarrow \of(0,1,-1)^{\oplus 2} \rightarrow 0,
\\
0 \rightarrow \Lambda \rightarrow K \rightarrow \of(0,0,1)^{\oplus 4} \rightarrow 0.
\end{gathered}
\end{equation}

Adding $\of \rightarrow \of$ to \eqref{Inclusion3-1} and from the previous considerations we get the conclusion.

\hypertarget{Fano3--2}{\subsection*{Fano 3--2}}
\subsubsection*{Mori-Mukai} A divisor from $|\of(2) \otimes \pi^*\of(0,1)|$ on the projective bundle $\pi:\PP(\of(-1,-1) \oplus \of^{\oplus 2} ) \rightarrow \PP^1 \times \PP^1$ such that $X \cap Y$ is irreducible, where $X$ is the Fano itself and $Y \in |\of(1)|$.

\subsubsection*{Our description} $\mZ(\Lambda(0,0,1) \oplus \of(0,1,2)) \subset \PP^1 \times \PP^1 \times \PP^5$, being $\Lambda \in \Ext^1(\of(0,1)^{\oplus 2}, \of(1, -1))$ a uniquely defined extension on $\PP^1 \times \PP^1$ fitting into \eqref{Lambda3-2} below.

\subsubsection*{Identification} 
We need to find $\PP(\of(-1,-1) \oplus \of^{\oplus 2} )$ over $\PP^1 \times \PP^1$. To do that, we argue as in Lemma \ref{projBundle1-12} or Lemma \ref{projBundle2-2}: we combine the two Euler exact sequences
\begin{gather*}
0 \rightarrow \of(-1,-1) \rightarrow \of(0,-1)^{\oplus 2} \rightarrow \of(1,-1) \rightarrow 0,\\
0 \rightarrow \of(0,-1)^{\oplus 2} \rightarrow  \of^{\oplus 4} \rightarrow \of(0,1)^{\oplus 2} \rightarrow 0,
\end{gather*}
and get

\begin{gather}
\label{inclusion3-2}
0 \rightarrow \of(-1,-1) \rightarrow \of^{\oplus 4} \rightarrow \Lambda \rightarrow 0,
\\
\label{Lambda3-2}
0 \rightarrow \of(1,-1) \rightarrow \Lambda \rightarrow \of(0,1)^{\oplus 2} \rightarrow 0.
\end{gather}
where the rank $3$ bundle $\Lambda$ is homogeneous, not completely reducible and globally generated, and its space of global sections coincides with $H^0(\PP^1 \times \PP^1, \of(0,1)^{\oplus 2}) \cong V_2^{\oplus 2}$. Adding $\of^{\oplus 2} \rightarrow \of^{\oplus 2}$ to \eqref{inclusion3-2} we get the desired description for $\PP(\of(-1,-1) \oplus \of^{\oplus 2})$ and the conclusion.

\hypertarget{Fano3--4}{\subsection*{Fano 3--4}}
\subsubsection*{Mori-Mukai} Blow up of \hyperlink{Fano2--18}{2--18} in a smooth fiber of the composition of the double cover projection to $\PP^1 \times \PP^2$ with the projection to $\PP^2$.
\subsubsection*{Our description}$\mZ(\Lambda(0,0,1,0) \oplus \of(0,0,2,0) \oplus \of(0,1,0,1)) \subset \PP^1 \times \PP^2 \times \PP^6 \times \PP^1$, where the bundle $\Lambda \in \Ext^1(\mQ_{\PP^2}^{\oplus 2},\of(1,-1))$ is a uniquely defined extension on $\PP^1 \times \PP^2$ fitting into sequence \eqref{Lambda2-18}.
\subsubsection*{Identification} The first two bundles define $Y \times \PP^1$, being $Y \subset \PP^1 \times \PP^2 \times \PP^6$ the Fano \hyperlink{Fano2--18}{2--18}. The curve on $Y$ we need to blow up is a complete intersection of two $(0,1,0)$ divisors, which cut in $Y$ the preimage of a $\PP^1$-fiber of the projection $\PP^1 \times \PP^2 \rightarrow \PP^2$. We therefore conclude by Lemma \ref{lem:blowup}.

\hypertarget{Fano3--5}{\subsection*{Fano 3--5}}
\subsubsection*{Mori-Mukai} Blow up of $\PP^1 \times \PP^2$ in a curve $C$ of degree $(5,2)$ such that $C \hookrightarrow \PP^1 \times \PP^2 \rightarrow \PP^2$ is an embedding.
\subsubsection*{Our description} $\mZ(\Lambda(0,0,1) \oplus \of(0,1,1)^{\oplus 2}) \subset \PP^1 \times \PP^2 \times \PP^7$, with $\Lambda \in \Ext^1_{\PP^1 \times \PP^2}(\mQ_{\PP^2}^{\oplus 2}, \of(1,-1))$ fitting into \eqref{Lambda2-18}.

\subsubsection*{Identification}
By Lemma \ref{lem:blowDegeneracyLocus} and Lemma \ref{lem:expectedRes} below, our Fano is the zero locus of $\pi^* (\of(0,1)^{\oplus 2}) \otimes \of(1)$ over the projective bundle $\pi:\PP(\of(-1,-1) \oplus \of^{\oplus 2}) \rightarrow \PP^1 \times \PP^2$. We thus need to find the latter projective bundle as the zero locus of a suitable vector bundle.

A straightforward modification of the argument used for \hyperlink{Fano2--18}{2--18} provides the desired description: adding $\of^{\oplus 2} \rightarrow \of^{\oplus 2}$ to \eqref{inclusion2-18} we get
\[
0 \rightarrow \of(-1,-1) \oplus \of^{\oplus 2} \rightarrow \of^{\oplus 8} \rightarrow \Lambda \rightarrow 0,
\]
where $\Lambda$ fits into \eqref{Lambda2-18}. The conclusion follows as soon as we have proved the following lemma.

\begin{lemma}\label{lem:expectedRes}
The ideal sheaf of a general rational curve $C$ of bidegree $(5,2)$ in $\PP^1 \times \PP^2$ admits a locally free resolution of the form
\begin{equation}
\label{shaperesolution}
0 \rightarrow
\of(-1,-4)^{\oplus 2} \rightarrow
\of(0,-2) \oplus \of(-1,-3)^{\oplus 2} \rightarrow
\mathcal{I}_C \rightarrow 0
\end{equation}
and, conversely, a general $3 \times 2$ matrix as above yields a presentation for the ideal sheaf of a general curve $C$.
\begin{proof}
The aim is to show, on the one hand, that the above resolution is the simplest (in terms of Betti numbers) such a curve is expected to have. On the other hand, if we manage to show that a curve having that resolution exists, a semicontinuity argument yields that a general curve shares the same behaviour.

The first task requires a bit of commutative algebra, which we specialise to our setting $\PP:=\PP^1 \times \PP^2$. Let $R:=\oplus_{(a,b) \in \mathbb{Z}^2}H^0(\PP,\of(a,b))$ be the Cox ring of $\PP$. If $I_C$ denotes the ideal of $C$, which can be seens as a finitely generated $R$-module, we have a multigraded minimal free resolution
\[
0 \rightarrow F_r \rightarrow \dotso \rightarrow F_0 \rightarrow I_C \rightarrow 0,
\]
where the $F_i$ are finitely generated free modules $F_i = \oplus_{(a,b) \in \mathbb{Z}^2}R(-a,-b)^{\oplus \beta_{i,(a,b)}}$, being $\beta_{i,(a,b)}$ the so-called multigraded Betti numbers, which are independent of the chosen resolution.

The so-called multigraded Hilbert series of $I_C$ is the formal Laurent series
\[
H_{I_C}:=\sum_{(a,b) \in \mathbb{Z}^2} \dim_{\mathbb{C}}(I_C)_{(a,b)}\cdot s^a t^b,
\]
which is well-known to encode the Betti numbers $\beta_{i,(a,b)}$ in the following way: it factors as
\[
H_{I_C}=\frac{
\sum_{(a,b) \in \mathbb{Z}^2}\left( \sum_{i=0}^r (-1)^{i} \beta_{i,(a,b)}\right) \cdot s^a t^b
}{(1-s)^2(1-t)^3}.
\]

By Riemann--Roch we can compute $H^0(C,\of_{C}(a,b))$ for any $(a,b) \in \mathbb{Z}^2$; if we assume that $C$ has maximal rank, i.e., that $H^0(\PP,\of_{\PP}(a,b)) \rightarrow H^0(C,\of_{C}(a,b))$ has maximal rank for all $(a,b) \in \mathbb{Z}^2$, then we explicitly have $\dim_{\mathbb{C}}(I_C)_{(a,b)}$ and $H_{I_C}$. Straightforward computations then show that the numerator of $H_{S/I_C}$ is $t^2 + 2st^3 -2st^4$; thus, the expected resolution of $I_C$ has the shape \eqref{shaperesolution}.

To conclude, it suffices to show the existence of a curve with the right genus and degree having the desired resolution. This task can be rather difficult, depending on the given invariants: on $\PP^1 \times \PP^2$, different approaches can be adopted, such as liaison theory or the construction of the Hartshorne--Rao module of the curve, see, e.g., \cite{KeneshlouTanturri1,KeneshlouTanturri2}. Our situation, however, is favourable, as the minors of a general matrix
\[
\of(-1,-4)^{\oplus 2} \rightarrow
\of(0,-2) \oplus \of(-1,-3)^{\oplus 2}
\]
generate the ideal of a smooth curve of maximal rank with the desired invariants. This can be checked via any computer algebra software like \cite{M2}.
\end{proof}
\end{lemma}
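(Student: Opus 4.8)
\section*{Proof proposal}

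The plan is the standard two-step pattern for such statements: first determine the \emph{expected} minimal resolution by a numerical computation valid under the maximal-rank hypothesis, and then exhibit a single curve realising it, so that semicontinuity of Betti numbers upgrades the conclusion to the general member of the family. The guiding principle is that $C$ has codimension $2$ in $\PP:=\PP^1\times\PP^2$, so a multigraded Hilbert--Burch philosophy predicts that $\mathcal{I}_C$ is generated by the maximal minors of a $3 \times 2$ matrix and admits a length-one locally free resolution; the whole content is to pin down which twists occur, and this is dictated entirely by the multigraded Hilbert series.

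First I would set $R=\C[s_0,s_1,x_0,x_1,x_2]$ with $\deg s_i=(1,0)$ and $\deg x_j=(0,1)$, and compute, under the maximal-rank assumption, $\dim_\C(I_C)_{(a,b)}=\max\{0,\,h^0(\PP,\of(a,b))-h^0(C,\of_C(a,b))\}$ in each bidegree. Here $h^0(\PP,\of(a,b))=(a+1)\binom{b+2}{2}$ for $a,b\ge0$, while $\of(a,b)|_C\cong\of_{\PP^1}(5a+2b)$ since $C\cong\PP^1$ maps with degree $5$ to the first factor and with degree $2$, as a conic, to the second; Riemann--Roch on $\PP^1$ then gives $h^0(C,\of_C(a,b))=5a+2b+1$ once $5a+2b\ge0$. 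Assembling the series $H_{I_C}=\sum\dim_\C(I_C)_{(a,b)}\,s^a t^b$ and clearing $(1-s)^2(1-t)^3$, I expect the numerator $t^2+2st^3-2st^4$. Its three terms lie in the pairwise distinct bidegrees $(0,2)$, $(1,3)$, $(1,4)$, so no consecutive cancellation is possible and the minimal resolution must carry exactly the Betti numbers recorded by the signs, i.e.\ the shape \eqref{shaperesolution}.

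For existence I would run Hilbert--Burch in reverse: take a general $3 \times 2$ matrix with a first row of bidegree $(1,2)$ and two rows of bidegree $(0,1)$, and let $J$ be the ideal of its $2\times2$ minors. These minors automatically have bidegrees $(0,2)$, $(1,3)$, $(1,3)$, the first being a conic in the $x$-variables, and the Hilbert--Burch complex then equips $J$ with a resolution of exactly the form \eqref{shaperesolution}, provided $V(J)$ has the expected codimension $2$. The real work, and the main obstacle, is to verify that for a general matrix $V(J)$ is a smooth irreducible curve of genus $0$ and bidegree $(5,2)$, rather than a reducible, singular, or lower-dimensional locus; I would settle this by exhibiting one explicit matrix over $\QQ$ and letting \cite{M2} certify smoothness, irreducibility, and the correct Hilbert polynomial. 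Since the Betti numbers of the first step are lower bounds forced by the Hilbert function and this example attains them on a nonempty open subset of the family of rational $(5,2)$-curves, semicontinuity finishes the proof: the general such curve has maximal rank and its ideal sheaf resolves exactly as in \eqref{shaperesolution}, which is simultaneously the converse statement.
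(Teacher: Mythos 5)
Your proposal follows the paper's proof essentially step for step: you compute the expected shape of the resolution from the multigraded Hilbert series under the maximal-rank hypothesis (arriving at the same numerator $t^2+2st^3-2st^4$), then exhibit a curve realising it as the $2\times 2$ minors of a general $3\times 2$ matrix of the indicated bidegrees, certify smoothness, irreducibility and the invariants with Macaulay2, and conclude for the general curve by semicontinuity. The only loose point is your claim that the pairwise distinct bidegrees $(0,2),(1,3),(1,4)$ by themselves force the minimal Betti numbers --- a Hilbert series never rules out a ghost generator/syzygy pair occurring in one and the same bidegree --- but this does not affect correctness, since (exactly as in the paper) the actual resolution is ultimately pinned down by the explicit example together with semicontinuity rather than by the numerics alone.
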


Let $\mathcal{F}:=\Lambda(0,0,1) \oplus \of(0,1,1)^{\oplus 2}$. If we consider the normal sequence for $Y=\mZ(\mathcal{F})$ inside $\PP:=\PP^1 \times \PP^2 \times \PP^7$, a few cohomology computations via the Koszul complex as described in Section \ref{computeinvariants} provide that $h^0(T_{\PP}|_Y)=74, h^0(\mathcal{F}|_Y)=79$ and the higher cohomology groups vanish. In \cite[Corollary 8.8]{pcs} it is shown that the family of Fano \hyperlink{Fano3--5}{3--5} has a unique member with infinite automorphism group. This means that a general model $Y$ admits a $(79-74=5)$-dimensional family of deformations, which is the dimension of the moduli of Fano \hyperlink{Fano3--5}{3--5}, hence $Y$ is general in moduli.

\hypertarget{Fano3--6}{\subsection*{Fano 3--6}}
\subsubsection*{Mori-Mukai} Blow up of $\PP^3$ in the disjoint union of a line and an elliptic curve of degree 4.
\subsubsection*{Our description} $\mZ(\of(1,0,2) \oplus \of(0,1,1)) \subset \PP^1 \times \PP^1 \times \PP^3$.
\subsubsection*{Identification}
A quartic elliptic curve is a given by a complete intersections of two quadrics in $\PP^3$. It then suffices to apply twice Lemma \ref{lem:blowup}.

\hypertarget{Fano3--8}{\subsection*{Fano 3--8}}
\subsubsection*{Mori-Mukai} Divisor from the linear system $|(\alpha \circ \pi)^* (\of_{\PP^2}(1)) \boxtimes \of_{\PP^2}(2)|$ on $\Bl_p \PP^2 \times \PP^2$, where $\pi: \Bl_p \PP^2 \times \PP^2 \rightarrow \Bl_p \PP^2$ is the first projection and $\alpha: \Bl_p \PP^2 \rightarrow \PP^2$ is the blow up map. 
\subsubsection*{Our description} $\mZ(\of(0,1,2) \oplus \of(1,1,0)) \subset \PP^1 \times \PP^2 \times \PP^2$.

\subsubsection*{Identification} See Lemma \ref{lem:blowup}.

\hypertarget{Fano3--9}{\subsection*{Fano 3--9}}
\subsubsection*{Mori-Mukai} Blow up of $\PP_{\PP^2}(\of \oplus \of(-2))$ in a quartic curve on $\PP^2$.
\subsubsection*{Our description} $\mZ(\Lambda(0,1,0) \oplus \mQ_{\PP^6}(0,0,1) \oplus K(0,0,1)) \subset \PP^2 \times \PP^6 \times \PP^{20}$, where the bundle $\Lambda\in \Ext^1(\Sym^2 \mQ,\mQ(-1))$ is a uniquely defined extension on $\PP^2$ fitting into sequence \eqref{Lambda2-36} and $K \in \Ext^3(\Sym^4 \mQ,\mQ(-3))$ is a uniquely defined extension on $\PP^2$ fitting into the chain of extensions \eqref{Kappa3--9}.
\subsubsection*{Identification}
We need to blow up \hyperlink{Fano2--36}{2--36} in a quartic curve $C$ on the base $\PP^2$. The first bundle defines $Y:=$ \hyperlink{Fano2--36}{2--36} inside $\PP^2\times \PP^6$; since $C$ is the zero locus of a map
\[
\of(0,-1) \oplus \of(-4,0) \rightarrow \of
\]
on $Y$, by Lemma \ref{lem:blowDegeneracyLocus} our Fano will be the zero locus of $\of(1)$ over $\PP_Y(\of(0,-1) \oplus \of(-4,0))$.

For the first bundle $\of(0,-1)$, we have the standard (pulled back) Euler sequence
\begin{equation}
\label{of0-1}
0 \rightarrow
\of(0,-1) \rightarrow
\of^{\oplus 7} \rightarrow
\mQ_{\PP^6} \rightarrow
0;
\end{equation}
the second bundle $\of(-4,0)$ requires a cumbersome though straightforward merging of the following (dualised) short exact sequences on $\PP^2$ given by Remark \ref{rem:principalParts}:
\begin{gather*}
0 \rightarrow
\of(-4) \rightarrow
\of(-3)^{\oplus 3} \rightarrow
\mQ(-3) \rightarrow
0\\
0 \rightarrow
\of(-3)^{\oplus 3} \rightarrow
\of(-2)^{\oplus 6} \rightarrow
\Sym^2\mQ(-2) \rightarrow
0\\
0 \rightarrow
\of(-2)^{\oplus 6} \rightarrow
\of(-1)^{\oplus 10} \rightarrow
\Sym^3\mQ(-1) \rightarrow
0\\
0 \rightarrow
\of(-1)^{\oplus 10} \rightarrow
\of^{\oplus 15} \rightarrow
\Sym^4\mQ \rightarrow
0.
\end{gather*}
Arranging them repeatedly as in Lemma \ref{projBundle1-12} or Lemma \ref{projBundle2-2}, we get to a uniquely defined homogeneous rank $14$ vector bundle $K$ on $\PP^2$ which fits into
\begin{equation}
\label{of-40}
0 \rightarrow
\of(-4) \rightarrow
\of^{\oplus 15} \rightarrow
K \rightarrow
0
\end{equation}
and into the following chain of extensions
\begin{equation}
\begin{gathered}
\label{Kappa3--9}
0 \rightarrow
\mQ(-3) \rightarrow
K_1 \rightarrow
\Sym^2\mQ(-2) \rightarrow
0\\
0 \rightarrow
K_1 \rightarrow
K_2 \rightarrow
\Sym^3\mQ(-1) \rightarrow
0\\
0 \rightarrow
K_2 \rightarrow
K\rightarrow
\Sym^4\mQ \rightarrow
0.
\end{gathered}
\end{equation}
One can directly check using \eqref{of-40} and \eqref{Kappa3--9} that $H^0(K) \cong \Sym^4 V_3$ and $H^1(K) \cong V_3$.
The conclusion follows by considering the direct sum of \eqref{of-40} and \eqref{of0-1}.

\hypertarget{Fano3--10}{\subsection*{Fano 3--10}}
\subsubsection*{Mori-Mukai} Blow up of $\mathbb{Q}_3$ in the disjoint union of 2 conics.
\subsubsection*{Our description} $\mZ(\of(1,0,1) \oplus \of(0,1,1) \oplus \of(0,0,2)) \subset \PP^1 \times \PP^1 \times \PP^4$.
\subsubsection*{Identification} It suffices to apply twice Lemma \ref{lem:blowup}.

\hypertarget{Fano3--11}{\subsection*{Fano 3--11}}

\subsubsection*{Mori-Mukai} Blow up of \hyperlink{Fano2--35}{2--35} in an elliptic curve which is the intersection of two divisors from $|-\frac{1}{2}K|.$
\subsubsection*{Our description}  
$\mZ(\of(1,1,1) \oplus \mQ_{\PP^2}(0,0,1)) \subset \PP^1 \times \PP^2 \times \PP^3$.
\subsubsection*{Identification} We recall first that \hyperlink{Fano2--35}{2--35} is the blow up of $\PP^3$ at a point, which we have already identified as $\mZ(\mQ_{\PP^2}(0,1)) \subset \PP^2 \times \PP^3$. As such, its anticanonical class is $\of(2,2)$ by adjunction. It then suffices to apply Lemma \ref{lem:blowup}.

\hypertarget{Fano3--12}{\subsection*{Fano 3--12}}
\subsubsection*{Mori-Mukai} 	
Blow up of $\PP^3$ in the disjoint union of a line and a twisted cubic.
\subsubsection*{Our description} $\mZ(\of(0,1,1)\oplus \of(0,1,1) \oplus \of (1,0,1)) \subset \PP^1 \times \PP^2 \times \PP^3.$

\subsubsection*{Identification} The variety $\mZ(\of(1,1)\oplus \of(1,1) ) \subset \PP^2 \times \PP^3$ is the Fano 3-fold 2--27, the blow up of $\PP^3$ in a twisted cubic. The result then follows by Lemma \ref{lem:blowup}, with the two extra $(0,1)$ divisors cutting a line in space which by construction is disjoint from the twisted cubic. To make this explicit, take coordinates $[z_0,z_1], \  [y_0, y_1, y_2], \ [x_0,\ldots, x_3]$. The divisor of degree $(1,0,1)$ is therefore given by an expression of type $\sum z_i f_i(x_i)$. Say for simplicity $z_0 x_0 +z_1 x_3$. The line $L$ in $\PP^3$ which we are blowing up is therefore given by $x_0=x_3=0$. On the other hand the two divisors of degree $(0,1,1)$ define the twisted cubic as follows: they are given by the solutions of, e.g., \[  
 {\begin{pmatrix}
   x_0 & x_1 & x_2 \\
   x_1 & x_2 & x_3 \\
  \end{pmatrix} }
  {\begin{pmatrix} y_0 \\
  y_1 \\
  y_2
  \end{pmatrix}}=0.
  \]
In particular this locus is trivially identified with the blow up of $\PP^3$ where the matrix drops rank, that is $ {\rank \begin{pmatrix}
   x_0 & x_1 & x_2 \\
   x_1 & x_2 & x_3 \\
  \end{pmatrix} } <2$. The latter are the equations of the twisted cubic in $\PP^3$, which we can easily check to be disjoint from the line $L$.

\hypertarget{Fano3--14}{\subsection*{Fano 3--14}}
\subsubsection*{Mori-Mukai} 	
Blow up of $\PP^3$ in the disjoint union of a plane cubic curve and a point outside the plane.
\subsubsection*{Our description} $\mZ( \Lambda(0,1,0) \oplus \of(1,1,0) \oplus \mQ_{\PP^2}(1,0,0)) \subset \PP^3 \times \PP^{10} \times \PP^2$, where the bundle $\Lambda \in \Ext^1(\Sym^2 \mQ,\mQ(-1))$ is a uniquely defined extension on $\PP^3$ fitting into sequence \eqref{Lambda1--12} above.

\subsubsection*{Identification} The first two bundles on $\PP^3 \times \PP^{10}$ determine \hyperlink{Fano2--28}{2--28}, i.e., the blow up of $\PP^3$ in a plane cubic curve. To blow it up in a point, we can apply Lemma \ref{lem:blow} for the base $\PP^3$, adding a $\PP^2$ factor and the corresponding bundle. The extra point will in general be outside the plane.

\hypertarget{Fano3--15}{\subsection*{Fano 3--15}}
\subsubsection*{Mori-Mukai} 	
Blow up of $\mathbb{Q}_3$ in the disjoint union of a line and a conic.
\subsubsection*{Our description}  $\mZ(\of(1,0,1) \oplus \of(0,1,1) \oplus \mQ_{\PP^2}(0,0,1)) \subset \PP^1 \times \PP^2 \times \PP^4$.

\subsubsection*{Identification} By Lemma \ref{lem:blow} the zero locus of the last two bundles on $\PP^1 \times \PP^2 \times \PP^4$ gives us $\PP^1 \times \Bl_{\PP^1}\mathbb{Q}_3$. We still have to cut with a section of $\of(1,0,1)$. By Lemma \ref{lem:blowup} this is the blow up of $\Bl_{\PP^1} \mathbb{Q}_3$ in the locus cut by two linear sections, which is in general disjoint from the $\PP^1$. The result follows.

\hypertarget{Fano3--16}{\subsection*{Fano 3--16}} 
\subsubsection*{Mori-Mukai} Blow up of \hyperlink{Fano2--35}{2--35} in the proper transform of a twisted cubic containing the centre of the blow up.
\subsubsection*{Our description}
$\mZ(\of(0,1,1) \oplus \of(1,0,1) \oplus \mQ_{\PP^2_1}(0,1,0)) \subset \PP^2 \times \PP^3 \times \PP^2$.

\subsubsection*{Identification} We first fix the system of coordinates
$\PP^2_{[y_0\ldots y_2]} \times \PP^3_{[x_0\ldots x_3]} \times \PP^2_{[w_0\ldots w_2]}.$ As a first step we use Lemma \ref{lem:blow} to identify $\mQ_{\PP^2} (0,1) \subset \PP^2 \times \PP^3 $ as \hyperlink{Fano2--35}{2--35}, i.e., $\Bl_p \PP^3$. The two remaining divisors are, on $\PP^2 \times \PP^3$, of degree $(0,1)$ and $(1,0)$ and are both trivially identified with linear forms on $\PP^3$, but with a distinction. Without loss of generality, assume that $p$ is the point $[1,0,0,0]$. We have $(f \in |\of(1,0)|) \in \mathrm{Ann}(p)$, while $(g \in |\of(0,1)|)$ gives a non-zero element of $V_4^{\vee}/\mathrm{Ann}(p)$. In other words, $f=f(x_1,x_2,x_3)$ does not contain the coordinate $x_0$, while the converse holds for $g$. Both the divisors were twisted by $\of_{\PP^2}(1)$, giving rise to two divisors of degree $(1,1)$ on $\Bl_p\PP^3 \times \PP^2_{[w_0\ldots w_2]}$. As in \hyperlink{Fano3--12}{3--12}, these lead to the blow up of $\Bl_p\PP^3$ in a twisted cubic, that (since $f \in \mathrm{Ann}(p)$) passes through the point $p \in \PP^3$. The result follows.

\hypertarget{Fano3--18}{\subsection*{Fano 3--18}}
\subsubsection*{Mori-Mukai} Blow up of $\PP^3$ in the disjoint union of a line and a conic.
\subsubsection*{Our description} $\mZ(\mQ_2(0;0,0) \oplus \of(0;1,1) \oplus \of(1;0,1)) \subset \PP^1 \times \Fl(1,2,5)$.

\subsubsection*{Identification}
This Fano can be evidently identified with the blow up of \hyperlink{Fano2--30}{2--30} in a line disjoint from the conic. Recall that we described \hyperlink{Fano2--30}{2--30} as $(\mZ(\mQ_2 \oplus \of(1,1)) \subset \Fl(1,2,5)) \cong \mZ(\of(1)) \subset \PP_{\PP^3}(\of(-1)\oplus \of)$. The result then follows from Lemma \ref{lem:blowup}, since two divisors of degree $(0,1)$ cut a line in the base $\PP^3$.

We can write an alternative description for this Fano, based on the alternative description already given for \hyperlink{Fano2--30}{2--30}. Using Lemma \ref{lem:blowup} the Fano \hyperlink{Fano3--18}{3--18} will be

\[
\mZ(\of(1,1,0) \oplus \of(0,1,1) \oplus  \mQ_{\PP^3}(0,0,1)) \subset \PP^1\times \PP^3 \times \PP^4.
\]

\hypertarget{Fano3--19}{\subsection*{Fano 3--19}}
\subsubsection*{Mori-Mukai}Blow up of $\mathbb{Q}_3$ in two non-collinear points.
\subsubsection*{Our description}  $\mZ(\mQ_{\PP^2}(0,1) \oplus \of(0,2)) \subset \PP^2 \times \PP^4$.

\subsubsection*{Identification}
By Lemma \ref{lem:blow}, the first divisor yields the blow up of $\PP^4$ along a line. The second divisor is identified with a general quadric in $\PP^4$, hence it cuts out a quadric hypersurface in $\PP^4$ blown up along two points. The general quadric does not contain the line, so the blown up points are in general non-collinear.

\hypertarget{Fano3--20}{\subsection*{Fano 3--20}} 
\subsubsection*{Mori-Mukai} Blow up of $\mathbb{Q}_3$ in the disjoint union of two lines.
\subsubsection*{Our description}
$\mZ(\of(1,0,1) \oplus \mQ_{\PP^2_1}(0,1,0) \oplus \mQ_{\PP^2_2}(0,1,0) ) \subset \PP^2 \times \PP^4 \times \PP^2$.

\subsubsection*{Identification} We remark that, by Lemma \ref{lem:blow}, another model for \hyperlink{Fano2--31}{2--31} (the blow up of $\mathbb{Q}_3$ in one line) is given by $\mZ(\of(1,1) \oplus \mQ_{\PP^2} (0,1)) \subset \PP^2 \times \PP^4$. Our model for \hyperlink{Fano3--20}{3--20} is just the iteration of the blow up process, where the second and the third bundles give the blow up of $\PP^4$ along two disjoint lines $L_1, L_2$  and the first bundle gives a quadric which contains both the lines. Notice that in fact a section $\sum_k f_{1,k} f_{2,k}$ of the bundle $\of(1,0,1)$ identifies a quadric in $\PP^4$ and $f_{i,k} \in \Ann(L_i)$ for $i=1,2$ (see also the arguments used for \hyperlink{Fano2--19}{2--19} and \hyperlink{Fano3--16}{3--16}).

\hypertarget{Fano3--21}{\subsection*{Fano 3--21}}
\subsubsection*{Mori-Mukai} Blow up of $\PP^1 \times \PP^2$ in a curve of degree $(2,1)$.
\subsubsection*{Our description} $\mZ(\of(0,1,1) \oplus \Lambda(0,0,1)) \subset \PP^1 \times \PP^2 \times \PP^6$, being $\Lambda \in \Ext^1(\mQ_{\PP^2}^{\oplus 2},\of(1,-1))$ a uniquely defined extension on $\PP^1 \times \PP^2$ fitting into sequence \eqref{Lambda2-18}.

\subsubsection*{Identification}
On $\PP^1 \times \PP^2$, a general complete intersection of a $(0,1)$ and a $(1,2)$ divisors is a smooth curve of degree $(2,1)$. In order to blow it up, we can use Lemma \ref{lem:blowDegeneracyLocus}, according to which our Fano will be the zero locus of $\of(1) \otimes \pi^*(0,1)$ over the projective bundle $\pi:\PP(\of(-1,-1) \oplus \of)\rightarrow \PP^1 \times \PP^2$.

The above projective bundle has already been found when dealing with \hyperlink{Fano2--18}{2--18}: it is the zero locus of $\Lambda(0,0,1)$ over $\PP^1 \times \PP^2 \times \PP^6$, with $\Lambda$ fitting into \eqref{Lambda2-18}.

\hypertarget{Fano3--22}{\subsection*{Fano 3--22}}
\subsubsection*{Mori-Mukai} Blow up of $\PP^1 \times \PP^2$ in a conic on $\lbrace x \rbrace \times \PP^2 $, $\lbrace x \rbrace \in \PP^1$.
\subsubsection*{Our description} $\mZ(\of(1,0,1) \oplus \Lambda(0,0,1)) \subset \PP^1 \times \PP^2 \times \PP^6$, being $\Lambda \in \Ext^1(\Sym^2 \mQ,\mQ(-1))$ a uniquely defined extension on $\PP^2$ fitting into sequence \eqref{Lambda2-36}.
\subsubsection*{Identification}
We need to blow up on $\PP^1 \times \PP^2$ a complete intersection curve given by two divisors of degree $(1,0)$ and $(0,2)$. To do that, we use Lemma \ref{lem:blowDegeneracyLocus}: our Fano will then be the zero locus of $\of(1)$ over the projective bundle $\PP(\of(-1,0) \oplus \of(0,-2))$.

To find the above projective bundle, we can add the standard (pulled back) Euler sequence on $\PP^1$ to \eqref{inclusion2-36} and get
\[
0 \rightarrow
\of(-1,0) \oplus \of(0,-2) \rightarrow
\of^{\oplus 8} \rightarrow
\of(1,0) \oplus \Lambda \rightarrow
0,
\]
being $\Lambda$ a uniquely defined extension on $\PP^2$ fitting into sequence \eqref{Lambda2-36}. The conclusion follows.

\hypertarget{Fano3--23}{\subsection*{Fano 3--23}}
\subsubsection*{Mori-Mukai}  Blow up of \hyperlink{Fano2--35}{2--35} in the proper transform of a conic containing the centre of the blow up.
\subsubsection*{Our description} $\mZ(\mQ_{\PP^2}(0,1,0) \oplus \of(1,0,1) \oplus \mQ_{\PP^3}(0,0,1)) \subset \PP^2 \times \PP^3 \times \PP^4$.

\subsubsection*{Identification} By Lemma \ref{lem:blow} the first bundle (when seen on the first two factors) gives $X:=$ \hyperlink{Fano2--35}{2--35}, the blow up of $\PP^3$ in one point $p$. We need to blow up $X$ along the proper transform of a conic $Q$ containing $p$. Note that $Q$ is cut out by a hyperplane and a quadric in $\PP^3$ both containing $p$, so that $Q$ is the degeneracy locus of a map $\of_X(-1,-1) \oplus \of_X(-1,0) \rightarrow \of_X$ (see, e.g., the arguments used for \hyperlink{Fano2--19}{2--19} and \hyperlink{Fano3--16}{3--16}). Lemma \ref{lem:blowDegeneracyLocus} yields that our Fano will be the zero locus of $\of(1) \otimes \pi^*(\of(1,0))$ over the projective bundle $\pi:\PP(\of(0,-1) \oplus \of) \rightarrow \PP^2 \times \PP^3$.

Such projective bundle can be found in $\PP^2 \times \PP^3 \times \PP^4$, as the sequence on $\PP^2 \times \PP^3$
\[
0 \rightarrow
\of(0,-1) \oplus \of \rightarrow
\of^{\oplus 5} \rightarrow
\mQ_{\PP^3} \rightarrow
0
\]
shows. The conclusion follows.

\hypertarget{Fano3--24}{\subsection*{Fano 3--24}}
\subsubsection*{Mori-Mukai} The fiber product of 2--32 with $\Bl_p\PP^2$ over $\PP^2$.
\subsubsection*{Our description} $\mZ(\of(1,1,0) \oplus \of(0,1,1)) \subset \PP^1 \times \PP^2 \times \PP^2.$
\subsubsection*{Identification} See \cite[\textsection 77]{corti}.

\hypertarget{Fano3--25}{\subsection*{Fano 3--25}}
\subsubsection*{Mori-Mukai} $\PP_{\PP^1 \times \PP^1}(\of(0,-1) \oplus \of(-1,0))$, or the blow up of $\PP^3$ in two disjoint lines.
\subsubsection*{Our description} $\mZ(\of(0,1)^{\oplus 2}) \subset \Fl(1,2,4)$.

\subsubsection*{Identification} We can identify $\Fl(1,2,4)$ with $\PP_{\Gr(2,4)}(\mU)$. Let $Z:= \PP^1 \times \PP^1$. The two $(0,1)$ sections give us $\PP_Z(\mU|_Z)$. By \cite[Theorem 1.4]{ottaviani} the restriction of $\mU$ to $Z$ coincides with the direct sum of $\of(0,-1) \oplus \of(-1,0)$. The result follows.

An alternative description is $\mZ(\of(1,1,0) \oplus \of(1,0,1)) \subset \PP^3 \times \PP^1 \times \PP^1$, by simply apply twice Lemma \ref{lem:blowup}.

\hypertarget{Fano3--26}{\subsection*{Fano 3--26}}
\subsubsection*{Mori-Mukai} Blow up of $\PP^3$ in the disjoint union of a point and a line.
\subsubsection*{Our description}  $\mZ(\of(1,0,1) \oplus \mQ_{\PP^2}(0,0,1)) \subset \PP^1 \times \PP^2 \times \PP^3$.

\subsubsection*{Identification} The bundle $\mQ_{\PP^2} (0,1)$ on $ \PP^2 \times \PP^3$ gives the Fano \hyperlink{Fano2--35}{2--35} by Lemma \ref{lem:blow}. Two extra sections of $\of(0,1)$ on this space cut a line that does not intersect the exceptional divisor (equivalently, a line in $\PP^3$ that does not pass through the blown up point). The identification therefore follows by Lemma \ref{lem:blowup}.

\hypertarget{Fano3--28}{\subsection*{Fano 3--28}}
\subsubsection*{Mori-Mukai} $\PP^1 \times \Bl_p \PP^2$.
\subsubsection*{Our description} $\mZ(\of(1,0,1)) \subset \PP^1 \times \PP^1 \times \PP^2$.
\subsubsection*{Identification} See Lemma \ref{lem:blowup}.

\hypertarget{Fano3--29}{\subsection*{Fano 3--29}}
\subsubsection*{Mori-Mukai} Blow up of \hyperlink{Fano2--35}{2--35} in a line on the exceptional divisor.
\subsubsection*{Our description} $\mZ(\mQ_{\PP^2}(1,0,0) \oplus \mQ_{\PP^3}(0,0,1) \oplus \Lambda(0,0,1) \oplus \of(0,-1,1)) \subset \PP^3 \times \PP^2 \times \PP^9$, being $\Lambda \in \Ext^1(\Sym^2 \mQ,\mQ(-1))$ a uniquely defined extension on $\PP^2$ fitting into sequence \eqref{Lambda2-36}.
\subsubsection*{Identification} By Lemma \ref{lem:blow} the first bundle gives, on the first two factors, the blow up $Y$ of $\PP^3$ along a point. We then need to blow up a line in the exceptional divisor. By \cite[Corollary 9.12]{EisenbudHarris3264}, the exceptional divisor is a $(1,-1)$ divisor in $Y$; in order to cut out a line on it, we have to intersect it with the strict transform of a hyperplane in $\PP^3$ passing through the point, which is a $(0,1)$ divisor (see, e.g., the argument used for \hyperlink{Fano3--16}{3--16}).

Summarising, we need to blow $Y$ up along the intersection of the two aforementioned divisors. By Lemma \ref{lem:blowDegeneracyLocus}, this yields that our Fano variety is the zero locus of $\pi^* \of(0,-1) \otimes \of(1)$ over $\pi:\PP(\of(-1,0) \oplus \of(0,-2)) \rightarrow Y$. 

To express the above projective bundle, we can add the standard (pulled back) Euler sequence on $\PP^3$ to \eqref{inclusion2-36} and get
\[
0 \rightarrow
\of(-1,0) \oplus \of(0,-2) \rightarrow
\of^{\oplus 10} \rightarrow
\mQ_{\PP^3} \oplus \Lambda \rightarrow
0,
\]
being $\Lambda$ a uniquely defined extension on $\PP^2$ fitting into sequence \eqref{Lambda2-36}. The conclusion follows.

\begin{caveat}
\label{caveatBundle}
The above bundle $\of(0,-1,1)$ has clearly no sections on $\PP^3 \times \PP^2 \times \PP^9$, so our notation seems misleading. In fact, this bundle acquires a $4$-dimensional space of global sections once it is restricted to the zero locus of the previous ones, so that the direct sum above should be taken with a pinch of salt.

This phenomenon naturally occurs when we need to consider the exceptional divisor of a blow up obtained via Lemma \ref{lem:blow}: as already remarked, if $Y=\Bl_{\PP^{n-m-1}}\PP^n=\mZ(\mQ_{\PP^m}(0,1))\subset \PP^m \times \PP^n$, then the exceptional divisor is a $(-1,1)$ divisor in $Y$. Notice that $\of_{\PP^n \times \PP^m}(-1,1)|_Y \cong \of_Y(-1,1)$ indeed has global sections.
\end{caveat}

\hypertarget{Fano3--30}{\subsection*{Fano 3--30}}
\subsubsection*{Mori-Mukai} Blow up of \hyperlink{Fano2--35}{2--35} in the proper transform of a line containing the centre of the blow up.
\subsubsection*{Our description} $\mZ(\of(1,0,1) \oplus \mQ_{\PP^2}(0,1,0)) \subset \PP^2 \times \PP^3 \times \PP^1$.
\subsubsection*{Identification} By Lemma \ref{lem:blow} the second bundle (when seen on the first two factors) gives a Fano $X$ which is \hyperlink{Fano2--35}{2--35}, the blow up of $\PP^3$ in one point $p$. We need to blow up $X$ along the proper transform of a line containing $p$, which is the complete intersection of two divisors of degree $(1,0)$ on $\PP^2 \times \PP^3$ (see, e.g., the argument used for \hyperlink{Fano3--16}{3--16}). We conclude by Lemma \ref{lem:blowup}.

\hypertarget{Fano3--31}{\subsection*{Fano 3--31}}
\subsubsection*{Mori-Mukai} Blow up of the cone over a smooth quadric in $\PP^3$ in the vertex, or $\PP_{\PP^1 \times \PP^1}(\of(-1,-1) \oplus \of)$.
\subsubsection*{Our description} of $\mZ(\mQ_2 \oplus \of(0,2)) \subset \Fl(1,2,5)$.

\subsubsection*{Identification} By Corollary \ref{cor:blowupflag} we have that $\mZ(\mQ_2) \subset \Fl(1,2,5)$ is isomorphic to $\PP_{\PP^3}(\of(-1) \oplus \of)$. The extra quadric cuts only the base $\PP^3$, and yields the identification. 

We want to give an alternative description as \[\mZ(\mQ_{\PP^3}(0,1)  \oplus \of(2,0)) \subset \PP^3 \times \PP^4.\] 
By Lemma \ref{lem:blow}, $\mQ_{\PP^3} (0,1)$ gives the blow up of $\PP^4$ at a point $p_0$, with dual coordinate $x_0$. A section of $\of(2,0)$ gives a quadric in the space $\Sym^2(V_5^{\vee}/\langle x_0 \rangle)$. This gives the equation of a cone over a smooth, degenerate quadric in $\PP^3_{[x_1, \ldots, x_4]}$. The result follows.

\hypertarget{Fano4--2}{\subsection*{Fano 4--2}}
\subsubsection*{Mori-Mukai} 
Blow up of the cone over a smooth quadric in $\PP^3$ in the disjoint union of the vertex and an elliptic curve on the quadric. 
\subsubsection*{Our description} $\mZ(\mQ_{\PP^3}(0,1,0) \oplus \of(2,0,0) \oplus \of(0,1,1) \oplus \mQ_{\PP^4}(0,0,1)) \subset \PP^3 \times \PP^4 \times \PP^5$.

\subsubsection*{Identification} 
We use the alternative description of \hyperlink{Fano3--31}{3--31}. In fact to blow up the requested elliptic curve it suffices to blow up $Y:=$ \hyperlink{Fano3--31}{3--31}, in its intersection with a hyperplane not passing through the vertex of the cone and a general quadric, i.e., in the intersection of a $(0,1)$ and a $(0,2)$ divisors. Lemma \ref{lem:blowDegeneracyLocus} yields that our Fano variety is the zero locus of $\pi^* \of(0,1) \otimes \of(1)$ over $\pi:\PP(\of(0,-1)\oplus \of) \rightarrow Y$. Such projective bundle can be obtained as the zero locus of the remaining bundle by considering the Euler sequence on $\PP^4$, which yields an embedding of $\of(0,-1)\oplus \of$ inside $\PP(\of^{\oplus 5} \oplus \of)$.

\hypertarget{Fano4--3}{\subsection*{Fano 4--3}}
\subsubsection*{Mori-Mukai} Blow up of $\PP^1 \times \PP^1 \times \PP^1$ in a curve of degree $(1,1,2)$.
\subsubsection*{Our description} $\mZ(\of(1,1,0,1) \oplus \of (0,0,1,1)) \subset \PP^1 \times \PP^1 \times \PP^1 \times \PP^2.$

\subsubsection*{Identification} A complete intersection of divisors of degree $(1,1,0)$, $(1,1,1)$ is a curve of degree $(1,1,2)$ in $\PP^1 \times \PP^1 \times \PP^1$. In order to blow it up, we use Lemma \ref{lem:blowDegeneracyLocus}: our Fano is then the zero locus of $\of(1) \otimes \pi^*\of(1,1,0)$ over $\pi:\PP(\of(0,0,-1) \oplus \of)\rightarrow \PP^1 \times \PP^1 \times \PP^1$. From the standard Euler sequence on $\PP^1$ we get
\[
0 \rightarrow
\of(0,0,-1) \oplus \of \rightarrow
\of^{\oplus 3} \rightarrow
\of(0,0,1) \rightarrow
0,
\]
hence the conclusion.

\hypertarget{Fano4--4}{\subsection*{Fano 4--4}}
\subsubsection*{Mori-Mukai} Blow up of \hyperlink{Fano3--19}{3--19} in the proper transform of a conic through the points.
\subsubsection*{Our description} $\mZ(\mQ_{\PP^2}(0,1,0) \oplus \of(0,2,0) \oplus \of(1,0,1)) \subset \PP^2 \times \PP^4 \times \PP^1. $

\subsubsection*{Identification} The first two bundles on $\PP^2 \times \PP^4$ give the Fano \hyperlink{Fano3--19}{3--19}. We then just need to use Lemma \ref{lem:blowup}, since two sections of $\of(1,0)$ cut the three dimensional quadric in a conic passing through the two points (see also the argument used for \hyperlink{Fano3--16}{3--16}).

\hypertarget{Fano4--5}{\subsection*{Fano 4--5}}
\subsubsection*{Mori-Mukai} Blow up of $\PP^1 \times \PP^2$ in the disjoint union of a curve of degree $(2,1)$ and a curve of degree $(1,0)$.
\subsubsection*{Our description} $\mZ(\of(0,1,1,0) \oplus \Lambda(0,0,1,0) \oplus \of(0,1,0,1)) \subset \PP^1 \times \PP^2 \times \PP^6 \times \PP^1 $, where the bundle $\Lambda \in \Ext^1(\mQ_{\PP^2}^{\oplus 2},\of(1,-1))$ is a uniquely defined extension on $\PP^1 \times \PP^2$ fitting into sequence \eqref{Lambda2-18}.

\subsubsection*{Identification} The first two bundles describe, on $\PP^1 \times \PP^2 \times \PP^6$, the variety \hyperlink{Fano3--21}{3--21}. We need to blow it up along a curve of degree $(1,0)$, which is the complete intersection of two divisors of degree $(0,1)$ on $\PP^1 \times \PP^2$. The result follows from Lemma \ref{lem:blowup}.

\hypertarget{Fano4--6}{\subsection*{Fano 4--6}}
\subsubsection*{Mori-Mukai} Blow up of $\PP^3$ in the disjoint union of 3 lines.
\subsubsection*{Our description} $\mZ(\of(1,1,0,0) \oplus \of(1,0,1,0) \oplus \of(1,0,0,1)) \subset \PP^3 \times \PP^1 \times \PP^1 \times \PP^1. $

\subsubsection*{Identification} It suffices to apply three times Lemma \ref{lem:blowup}. By dimension reasons the three lines on $\PP^3$ which are cut each times are disjoint.

\hypertarget{Fano4--7}{\subsection*{Fano 4--7}}
\subsubsection*{Mori-Mukai} 	
Blow up of 2--32 in the disjoint union of a curve of degree $(0,1)$ and a curve of degree $(1,0)$.
\subsubsection*{Our description}  $\mZ(\of(1,0; 1,0) \oplus \of(0,1;0,1)) \subset \Fl(1,2,3) \times \PP^1 \times \PP^1. $

\subsubsection*{Identification} The flag variety $F:=\Fl(1,2,3)$ can be identified with 2--32, that is a $(1,1)$ section of $\PP^2 \times (\PP^2)^{\vee}.$ Notice that under this identification the generators of the Picard group of the flag are the restriction of the canonical ones on $\PP^2 \times (\PP^2)^{\vee}.$ In particular $H^0(F, \of_{F}(1,0)) \cong V_3^{\vee}$ and $H^0(F, \of_{F}(0,1)) \cong V_3$.
The zero locus of two sections of $\of_{F}(1,0)$ is a $(0,1)$ curve, and the opposite holds for $\of_{F}(0,1)$. We then apply twice Lemma \ref{lem:blowup} to conclude.

Of course thanks to the above identification and Lemma \ref{lem:blowup} this Fano can be described as well as 
\[
\mZ(\of(1,1,0,0) \oplus \of(1,0,1,0) \oplus \of(0,1,0,1)) \subset \PP^2 \times \PP^2 \times \PP^1 \times \PP^1.
\]

\hypertarget{Fano4--8}{\subsection*{Fano 4--8}}
\subsubsection*{Mori-Mukai} Blow up of \hyperlink{Fano3--31}{3--31} (i.e., $\PP_{\PP^1 \times \PP^1}(\of(-1,-1) \oplus \of)$) in a $(1,1)$-section of the base $\PP^1 \times \PP^1$, or blow up of $\PP^1 \times \PP^1 \times \PP^1$ in a curve of degree $(0,1,1)$.
\subsubsection*{Our description} $\mZ(\mQ_2 \oplus  \of(0,2;0)  \oplus \of(1,0;1) ) \subset \Fl(1,2,5) \times \PP^1.$

\subsubsection*{Identification} We use the first description by Mori--Mukai, together with our description of \hyperlink{Fano3--31}{3--31}.  Given this, it suffices to apply Lemma \ref{lem:blowup}, since the zero locus of two extra copies of $\of_{F}(1,0)$ on $\mZ(\mQ_2 \oplus \of_{F}(0,2)) \subset F:=\Fl(1,2,5)$ is such a curve. In fact $Z:=\mZ(\mQ_2 \oplus \of_{F}(0,2) \oplus \of_{F}(1,0)) \subset F$ corresponds to the base $\PP^1 \times \PP^1$; on $Z$, both the restrictions $\of_{F}(1,0)|_Z \cong \of_{F}(0,1)|_Z $ coincide with $\of_{\PP^1 \times \PP^1}(1,1)$, as can be easily checked via a Chern classes computation.

Alternatively, we can use Lemma \ref{lem:blowup} to give another description of this Fano, given the alternative one for \hyperlink{Fano3--31}{3--31}. In particular \hyperlink{Fano4--8}{4--8} will be given as
\[
\mZ(\mQ_{\PP^3}(0,1,0)  \oplus \of(2,0,0) \oplus \of(0,1,1)) \subset \PP^3 \times \PP^4 \times \PP^1.
\]
\hypertarget{Fano4--9}{\subsection*{Fano 4--9}}
\subsubsection*{Mori-Mukai} Blow up of \hyperlink{Fano3--25}{3--25} in an exceptional rational curve $E$ of the blow up.
\subsubsection*{Our description} $\mZ(\mQ_{\PP^2}(0,1,0,0) \oplus \of(1,0,1,0) \oplus \of(0,1,0,1)) \subset \PP^2 \times \PP^3 \times \PP^1 \times \PP^1.$

\subsubsection*{Identification} First we use that the bundle $\mQ_{\PP^2} (0,1) \subset \PP^2 \times \PP^3$ gives the blow up $\Bl_p \PP^3$ by Lemma \ref{lem:blow}. Lemma \ref{lem:blowup} yields that the other two bundles yield the blow up along two other lines $L, L'$ in $\PP^3$: $L$ (corresponding to $\of(1,0,1,0)$) passing through $p$, $L'$ avoiding it (see, e.g., the argument used for \hyperlink{Fano3--16}{3--16}). Therefore we identify the above variety with $\Bl_{\Sigma} \PP^3$, where $\Sigma:= L \cup L' \cup p$, and $p \in L$. This is the same as $\Bl_{E}(\Bl_{L \cup L'} \PP^3)$. Since the exceptional divisor of the second blow up $\pi_2$ is a $\PP^1$-bundle over the union of the two lines, (with $E=\pi_2^{-1}(p)$) the result follows.

\hypertarget{Fano4--10}{\subsection*{Fano 4--10}}
\subsubsection*{Mori-Mukai} $\PP^1 \times \Bl_2 \PP^2$.
\subsubsection*{Our description} $\mZ(\of(0,1,1) \oplus \mQ_{\PP^2}(0,0,1)) \subset \PP^1 \times \PP^2 \times \PP^3$.

\subsubsection*{Identification} Lemma \ref{lem:blow} identifies the zero locus of a general section of the second bundle with $\PP^1 \times \Bl_p \PP^3$. A section of the remaining bundle gives a quadric in $\PP^3$ containing $p$ (see, e.g., the argument used for \hyperlink{Fano2--19}{2--19}), which identifies our model with $\Bl_p (\PP^1 \times \PP^1)$, which is isomorphic to the blow up of $\PP^2$ in two points. We remark that Lemma \ref{lem:blowup} provides another simple model, i.e., the zero locus of $\of(1,0,1,0)\oplus \of(0,1,1,0)$ over $\PP^1 \times \PP^1 \times \PP^2 \times \PP^1$.

\hypertarget{Fano4--11}{\subsection*{Fano 4--11}}
\subsubsection*{Mori-Mukai} Blow up of \hyperlink{Fano3--28}{3--28} in $\lbrace x\rbrace \times E$, $x \in \PP^1$ and $E$ the $(-1)$-curve.
\subsubsection*{Our description}. $\mZ(\of(0,1,1,0) \oplus \mQ_{\PP^2}(0,0,0,1) \oplus \Lambda(0,0,0,1) \oplus \of(0,0,-1,1)) \subset \PP^1 \times \PP^2 \times \PP^1 \times \PP^6$, being $\Lambda \in \Ext^1(\of(0,1)^{\oplus 2}, \of(1, -1))$ a uniquely defined extension on $\PP^1 \times \PP^1$ fitting into \eqref{Lambda3-2}.
\subsubsection*{Identification}
By Lemma \ref{lem:blow} the first bundle defines, on $\PP^1 \times \PP^2 \times \PP^1$, the Fano \hyperlink{Fano3--28}{3--28}. By \cite[Corollary 9.12]{EisenbudHarris3264}, we need to blow up the intersection of a $(1,0,0)$ and a $(0,1,-1)$ divisors. Using Lemma \ref{lem:blowDegeneracyLocus}, our Fano will be the zero locus of $\of(1) \otimes \pi^*\of(0,0,-1)$ over the projective bundle $\pi:\PP(\of(-1,0,-1) \otimes \of(0,-1,0)) \rightarrow \PP^1 \times \PP^2 \times \PP^1$.

For $\of(-1,0,-1)$ we can pull back sequence \eqref{inclusion3-2} and get
\begin{equation}
0 \rightarrow \of(-1,0,-1) \rightarrow \of^{\oplus 4} \rightarrow \Lambda \rightarrow 0,
\end{equation}
where $\Lambda$ fits into \eqref{Lambda3-2}. Adding it with the standard Euler sequence on $\PP^2$, we get
\[
0 \rightarrow \of(-1,0,-1) \oplus \of(0,-1,0) \rightarrow \of^{\oplus 7} \rightarrow \Lambda \oplus \mQ_{\PP^2} \rightarrow 0,
\]
which gives the conclusion.

We remark that the last bundle in the description should be taken with a caveat, as it has no global sections on the ambient space, but acquires some when restricted to the zero locus of the previous bundles. See Caveat \ref{caveatBundle}.

\hypertarget{Fano4--12}{\subsection*{Fano 4--12}}
\subsubsection*{Mori-Mukai} Blow up of \hyperlink{Fano2--33}{2--33} in the disjoint union of two exceptional lines of the blow up.
\subsubsection*{Our description} $\mZ(\of(1,1,0) \oplus \Lambda(0,0,1) \oplus \of(-1,1,1)) \subset \PP^1 \times \PP^3 \times \PP^{8}$, being $\Lambda \in \Ext^1(\mQ_{\PP^3}^{\oplus 2},\of(1,-1))$ a uniquely defined extension on $\PP^1 \times \PP^3$ fitting into sequence \eqref{Lambda4-12} below.
\subsubsection*{Identification} By Lemma \ref{lem:blow} (or Lemma \ref{lem:blowup}) the first bundle gives, on the first two factors, the blow up $Y$ of $\PP^3$ along a line. We then need to blow up two disjoint lines in the exceptional divisor. By \cite[Corollary 9.12]{EisenbudHarris3264}, the exceptional divisor is a $(-1,1)$ divisor in $Y$; in order to cut out two lines on it, we have to intersect it with the strict transform of a general quadric hypersurface in $\PP^3$, which is a $(0,2)$ divisor cutting the blown up line in two points.

Summarising, we need to blow $Y$ up along the intersection of the two aforementioned divisors. By Lemma \ref{lem:blowDegeneracyLocus}, this yields that our Fano variety is the zero locus of $\pi^* \of(-1,1) \otimes \of(1)$ over the projective bundle $\pi:\PP(\of(-1,-1) \oplus \of) \rightarrow \PP^1 \times \PP^3$.

To describe this projective bundle, we can argue as in Lemma \ref{projBundle1-12} or Lemma \ref{projBundle2-2}: we combine the (pull back of the) two (possibly twisted) Euler sequences
\begin{gather*}
0 \rightarrow \of(-1,-1) \rightarrow \of(0,-1)^{\oplus 2} \rightarrow \of(1,-1) \rightarrow 0,\\
0 \rightarrow \of(0,-1)^{\oplus 2} \rightarrow \of^{\oplus 8} \rightarrow \mQ_{\PP^3}^{\oplus 2} \rightarrow 0.
\end{gather*}
We get
\begin{gather}
\label{inclusion4-12}
0 \rightarrow \of(-1,-1) \rightarrow \of^{\oplus 8} \rightarrow \Lambda \rightarrow 0,
\\
\label{Lambda4-12}
0 \rightarrow \of(1,-1) \rightarrow \Lambda \rightarrow \mQ_{\PP^3}^{\oplus 2} \rightarrow 0,
\end{gather}
where the rank $7$ bundle $\Lambda$ is homogeneous, not completely reducible and globally generated, and its space of global sections coincides with $H^0(\PP^3, 
\mQ^{\oplus 2}) \cong (V_4)^{\oplus 2}$. Adding $\of \rightarrow \of$ to \eqref{inclusion4-12} we get that $\PP(\of(-1,-1) \oplus \of)$ is the zero locus of $\Lambda(0,0,1)$ in $\PP^1 \times \PP^3 \times \PP^8$, whence the conclusion.

We remark that the last bundle in the description should be taken with a caveat, as it has no global sections on the ambient space, but acquires some when restricted to the zero locus of the previous bundles. See Caveat \ref{caveatBundle}.

\hypertarget{Fano4--13}{\subsection*{Fano 4--13}}
\subsubsection*{Mori-Mukai} Blow up of $\PP^1 \times \PP^1 \times \PP^1$ in a curve of degree $(1,3,1)$.
\subsubsection*{Our description}
$\mZ(\Lambda(0,0,0,1) \oplus \of(1,0,1,1)) \subset \PP^1 \times \PP^1 \times \PP^1 \times \PP^4$, being $\Lambda \in \Ext^1(\of(0,1)^{\oplus 2}, \of(1, -1))$ a uniquely defined extension on $\PP^1 \times \PP^1$ (the first two copies) fitting into \eqref{Lambda3-2}.
\subsubsection*{Identification}
The complete intersection between a $(2,1,1)$ and a $(1,0,1)$ divisors is a curve of degree $(1,3,1)$ in $\PP^1 \times \PP^1 \times \PP^1$. In order to blow it up, we use Lemma \ref{lem:blowDegeneracyLocus}: our Fano $Y$ will be the zero locus of $\of(1) \otimes \pi^*\of(1,0,1)$ over $\pi:\PP(\of(-1,-1,0) \oplus \of)\rightarrow \PP^1 \times \PP^1 \times \PP^1$. From \eqref{inclusion3-2} we get that this projective bundle is the zero locus of $\Lambda(0,0,0,1)$ over $\PP^1 \times \PP^1 \times \PP^1 \times \PP^4$, where $\Lambda$ is a bundle on $\PP^1 \times \PP^1$ (the first two copies) fitting into \eqref{Lambda3-2}. The conclusion follows.

Analogously, we could have used the complete intersection of a $(3,1,0)$ and a $(1,0,1)$ divisors, which is again a curve of degree $(1,3,1)$. A similar argument requires the projective bundle $\PP(\of(-2,-1,0) \oplus \of(0,0,-1))$ and produces a model $Y'$ in $\PP^1 \times \PP^1 \times \PP^1 \times \PP^7$. If we consider the normal sequence for $Y=\mZ(\mathcal{F}) \subset \PP:=\PP^1 \times \PP^1 \times \PP^1 \times \PP^4$,
a few cohomology computations via the Koszul complex as described in Section \ref{computeinvariants} provide that $h^0(T_{\PP}|_Y)=33, h^0(\mathcal{F}|_Y)=34$ and the higher cohomology groups vanish. In \cite[Lemma 8.11]{pcs} it is shown that the family of curves of degree $(1,1,3)$ on $(\PP^1)^3$ has dimension one (up to the action of $\Aut((\PP^1)^3)$), and that for all but one curve the automorphism group is finite. This means that a general model $Y$ admits a $(34-33=1)$-dimensional family of deformations, which is the dimension of the moduli of Fano \hyperlink{Fano4--13}{4--13}, hence $Y$ is general in moduli. The corresponding computations for $Y'\subset \PP^1 \times \PP^1 \times \PP^1 \times \PP^8$ give, analogously, $73-72=1$, so that the models $Y'$ are also general in the moduli space of Fano \hyperlink{Fano4--13}{4--13}.

\hypertarget{Fano5--1}{\subsection*{Fano 5--1}}
\subsubsection*{Mori-Mukai} Blow up of \hyperlink{Fano2--29}{2--29} in the disjoint union of three exceptional lines of the blow up.
\subsubsection*{Our description}
$\mZ(\of(1,1,0,0) \oplus \Lambda(0,0,1,0) \oplus \of(-1,1,1,0) \oplus \mQ_{\PP^3}(0,0,0,1) \oplus \mQ_{\PP^8}(0,0,0,1)) \subset \PP^1 \times \PP^3 \times \PP^{8} \times \PP^{11}$, being $\Lambda \in \Ext^1(\mQ_{\PP^3}^{\oplus 2},\of(1,-1))$ a uniquely defined extension on $\PP^1 \times \PP^3$ fitting into sequence \eqref{Lambda4-12}.

\subsubsection*{Identification} This Fano variety is the blow up of \hyperlink{Fano4--12}{4--12} along a rational curve, as per the alternative description given in \cite[Table 5]{morimukai}. If we consider the model for \hyperlink{Fano4--12}{4--12} in $\PP^1 \times \PP^3 \times \PP^8$ (given by the zero locus of the first three bundles), we can check that the intersection of a $(0,1,0)$ divisor and a $(0,0,1)$ divisor is indeed a rational curve $C$, and the corresponding blow up $Y$ can be checked to have the right Hodge diamond and invariants. To ensure that $Y$ is Fano (hence, it is \hyperlink{Fano5--1}{5--1}) we can check that a fiber $F$ of the exceptional divisor has $F.K_Y=-1$, so that $-K_Y$ is ample by \cite[Thm 1.4.3]{isp5}.

As usual, we blow up $C$ via Lemma \ref{lem:blowDegeneracyLocus}: our Fano will be the zero locus of $\of(1)$ over $\PP(\of(0,-1,0) \oplus \of(0,0,-1))$. This projective bundle can be easily described by considering the direct sum of the two Euler sequences, which yields
\[
0 \rightarrow \of(0,-1,0) \oplus \of(0,0,-1) \rightarrow \of^{\oplus 13} \rightarrow \mQ_{\PP^3} \oplus \mQ_{\PP^8} \rightarrow 0.
\]
The conclusion follows.

\hypertarget{Fano5--2}{\subsection*{Fano 5--2}}
\subsubsection*{Mori-Mukai} Blow up of \hyperlink{Fano3--25}{3--25} in the disjoint union of two exceptional lines on the same irreducible component.

\subsubsection*{Our description}  $\mZ(\of(1,1,0,0) \oplus \Lambda(0,0,1,0) \oplus \of(-1,1,1,0) \oplus \of(0,1,0,1)) \subset \PP^1 \times \PP^3 \times \PP^{8} \times \PP^1$, being $\Lambda \in \Ext^1(\mQ_{\PP^3}^{\oplus 2},\of(1,-1))$ a uniquely defined extension on $\PP^1 \times \PP^3$ fitting into sequence \eqref{Lambda4-12}.

\subsubsection*{Identification} 
Recall that \hyperlink{Fano4--12}{4--12} is the blow up of $\PP^3$ in a line and then in the disjoint union of two exceptional lines of the blow up, and is given by the zero locus of the first three bundles. To get \hyperlink{Fano5--2}{5--2} we need to blow it up along the strict transform of a line not intersecting any of the other three. The previously found model for \hyperlink{Fano4--12}{4--12} was in $\PP^1 \times \PP^3 \times \PP^{8}$, and such a line is the complete intersection of two $(0,1,0)$ divisors. Lemma \ref{lem:blowup} yields the conclusion.

\section{Tables}
\label{tables}
In this last section we collect in an exhaustive table all the models for Fano 3-folds we exhibited in Section \ref{Fano3folds}, together with the models already existing in the literature. In Table \ref{tab:3folds}, MM stands for the Mori--Mukai numeration; the Picard rank $\rho$ is the first number. In the column ``Inv'' an entry $(a,b,c)$ means the invariants $(h^0(-K), K^3, h^{2,1})$ of the corresponding Fano. The column $X$ refers to the ambient variety, whereas $\mathcal{F}$ is the bundle whose zero locus produces the 3-fold. In some cases alternative descriptions (marked by ``\emph{alt.}'') are given, whenever we find them equally interesting. In the column ``Notes'' we put either the reference for the chosen model, when it was not provided by us, or a further explanation of the bundles appearing in the previous column.

We include a second table, Table \ref{tab:delpezzo}, for Del Pezzo surfaces, whose models can be easily figured out from Table \ref{tab:3folds}. Each family in the table (except 2--1) will correspond to the blow up of $\PP^2$ in $9-K^2$ points in sufficiently general position. All models (for 3-folds and Del Pezzo surfaces) are general.

\begin{centering}
\begin{scriptsize}
\setlength\tabcolsep{4pt}
\begin{longtable}{ccccc}

\caption{Fano 3-folds.}\label{tab:3folds}\\
\toprule
MM&
Inv& 
$X$ & 
$\mathcal{F}$ & 
Notes \\
\cmidrule(lr){1-1}\cmidrule(lr){2-2}\cmidrule(lr){3-3} \cmidrule(lr){4-4} \cmidrule(lr){5-5}
\endfirsthead
\multicolumn{5}{l}{\vspace{-0.25em}\scriptsize\emph{\tablename\ \thetable{} continued from previous page}}\\
\toprule
MM&
Inv& 
$X$ & 
$\mathcal{F}$ & 
Notes \\
\cmidrule(lr){1-1}\cmidrule(lr){2-2}\cmidrule(lr){3-3} \cmidrule(lr){4-4} \cmidrule(lr){5-5}
\endhead
\multicolumn{5}{r}{\scriptsize\emph{Continued on next page}}\\
\endfoot
\bottomrule
\endlastfoot

\hyperlink{Fano1--1}{1--1} & $(4,2,52)$ &\ $\PP(1^4,3)$\ & $\of(6)$&\cite{isp5}\\
\multicolumn{2}{l}{\rule{10pt}{0pt}\emph{alt.}}& $\PP^3 \times \PP^{20}$&$\of(0,2) \oplus K(0,1)$&$K \in \Ext^2_{\PP^3}(\Sym^3 \mQ, \mQ(-2))$\\
\evnrow 1--2 & $(5,4,30)$& $\PP^4$& $\of(4)$& \cite{isp5} \\
1--3 & $(6,6,20)$& $\PP^5$& $\of(2) \oplus \of(3)$& \cite{isp5} \\
\evnrow 1--4 & $(7,8,14)$& $\PP^6$& $\of(2)^{\oplus 3}$&\cite{isp5} \\
1--5 & $(8,10,10)$& $\Gr(2,5)$& $\of(2) \oplus \of(1)^{\oplus 2}$&\cite{isp5} \\
\evnrow 1--6 & $(9,12,7)$& $\OGr^+(5,10)$& $\of(\frac{1}{2})^{\oplus 7}$&\cite{isp5} \\
\evnrow \multicolumn{2}{l}{\rule{10pt}{0pt}\emph{alt.}} & $\Gr(2,5)$& $\mU^{\vee}(1)\oplus \of(1)$&\cite{corti} \\
1--7 & $(10,14,5)$& $\Gr(2,6)$&
$\of(1)^{\oplus 5}$&\cite{isp5} \\
\evnrow 1--8 & $(11,16,3)$& $\Gr(3,6)$& $\W^2\mU^{\vee} \oplus \of(1)^{\oplus 3}$&\cite{isp5} \\
1--9 & $(12,18,2)$& $\Gr(2,7)$& $\mQ^{\vee}(1) \oplus \of(1)^{\oplus 2}$&\cite{isp5} \\
\evnrow 1--10 & $(14,22,0)$& $\Gr(3,7)$& $(\W^2\mU^{\vee})^{\oplus 3}$&\cite{isp5} \\
1--11 & $(7,8,21)$& $\PP(1^3,2,3)$& $\of(6)$&\cite{isp5} \\
\evnrow \hyperlink{Fano1--12}{1--12} & $(11,16,10)$& $\PP(1^4,2)$& $\of(4)$&\cite{isp5} \\
\evnrow \multicolumn{2}{l}{\rule{10pt}{0pt}\emph{alt.}} & $\PP^3 \times \PP^{10}$& $\Lambda(0,1) \oplus \of(0,2)$&$\Lambda \in \Ext^1_{\PP^3}(\Sym^2 \mQ, \mQ(-1))$ \\
1--13 & $(15,24,5)$& $\PP^4$& $\of(3)$&\cite{isp5} \\
\evnrow 1--14 & $(19,32,2)$& $\PP^5$& $\of(2)^{\oplus 2}$&\cite{isp5} \\
1--15 & $(23,40,0)$& $\Gr(2,5)$& $\of(1)^{\oplus 3}$&\cite{isp5} \\
\evnrow 1--16 & $(30,54,0)$& $\PP^4$& $\of(2)$&\cite{isp5} \\
1--17 & $(35,64,0)$& $\PP^3$& & \cite{isp5} \\
\evnrow 2--1 & $(5,4,22)$& $\PP(1^3,2,3) \times \PP^1$& $\of(6,0) \oplus \of(1,1)$&\cite{corti} \\
\hyperlink{Fano2--2}{2--2} & $(6,6,20)$& $\PP^1\times \PP^2 \times \PP^{12}$& $\of(0,0,2) \oplus K(0,0,1)$&$K \in \Ext_{\PP^1 \times \PP^2}^2(\of(1,0)^{\oplus 6}, \mQ_{\PP^2}(-1,-1))$ \\
\evnrow \hyperlink{Fano2--3}{2--3} & $(7,8,11)$& $\PP(1^4,2) \times \PP^1$& $\of(4,0) \oplus \of(1,1)$&\cite{corti} \\
\evnrow \multicolumn{2}{l}{\rule{10pt}{0pt}\emph{alt.}} & $\PP^3 \times \PP^{10} \times \PP^1$& $\Lambda(0,1,0) \oplus \of(0,2,0) \oplus \of(1,0,1)$&$\Lambda \in \Ext^1_{\PP^3}(\Sym^2 \mQ, \mQ(-1))$ \\
2--4 & $(8,10,10)$& $\PP^1 \times \PP^3$& $\of(1,3)$&\cite{corti} \\
\evnrow \hyperlink{Fano2--5}{2--5} & $(9,12,6)$& $\PP^1 \times \PP^4$& $\of(0,3) \oplus \of(1,1)$& \\
2--6 & $(9,12,9)$& $\PP^2 \times \PP^2$& $\of(2,2)$&\cite{corti} \\
\evnrow 2--7 & $(10,14,5)$& $\PP^1 \times \PP^4$& $\of(0,2) \oplus \of(1,2)$&\cite{corti} \\
\hyperlink{Fano2--8}{2--8} & $(10,14,9)$& $\PP^2 \times \PP^3 \times \PP^{12}$& $\Lambda(0,0,1) \oplus \of(0,0,2)$&$\Lambda \in \Ext^1_{\PP^2 \times \PP^3}(\mQ_{\PP^3}^{\oplus 3}, \mQ_{\PP^2}(0, -1))$ \\
\evnrow 2--9 & $(11,16,5)$& $\PP^2 \times \PP^3$& $\of(1,1) \oplus \of(1,2)$& \cite{corti} \\
\hyperlink{Fano2--10}{2--10} & $(11,16,3)$& $\Gr(2,4) \times \PP^1$& $\of(2,0) \oplus \of(1,1)$& \\
\evnrow \hyperlink{Fano2--11}{2--11} & $(12,18,5)$& $\PP^2 \times \PP^4$& $\mQ_{\PP^2}(0,1) \oplus \of(1,2)$& \\
\evnrow \multicolumn{2}{l}{\rule{10pt}{0pt}\emph{alt.}} & $\Fl(1,3,5)$& $\mQ_2^{\oplus 2} \oplus \of(2,1)$& \\
2--12 & $(13,20,3)$& $\PP^3 \times \PP^3$& $\of(1,1)^{\oplus 3}$&\cite{corti} \\
\evnrow 2--13 & $(13,20,2)$& $\PP^2 \times \PP^4$& $\of(1,1)^{\oplus 2} \oplus \of(0,2)$&\cite{corti} \\
2--14 & $(13,20,1)$& $\Gr(2,5) \times \PP^1$& $\of(1,0)^{\oplus 3} \oplus \of(1,1)$&\cite{corti} \\
\evnrow \hyperlink{Fano2--15}{2--15} & $(14,22,4)$& $\PP^3 \times \PP^4$& $\mQ_{\PP^3}(0,1) \oplus \of(2,1)$& \\
\evnrow \multicolumn{2}{l}{\rule{10pt}{0pt}\emph{alt.}} & $\Fl(1,2,5)$& $\mQ_2 \oplus \of(1,2)$& \\
 \hyperlink{Fano2--16}{2--16} & $(14,22,2)$& $\PP^2 \times \Gr(2,4)$& $\mU^{\vee}_{\Gr(2,4)}(1,0) \oplus \of(0,2)$& \\
  \multicolumn{2}{l}{\rule{10pt}{0pt}\emph{alt.}} & $\Fl(1,2,4)$& $\of(1,0) \oplus \of(0,2)$& \\
  \evnrow \hyperlink{Fano2--17}{2--17} & $(15,24,1)$& $\Gr(2,4) \times \PP^3$& $\mU_{\Gr(2,4)}^{\vee}(0,1) \oplus \of(1,1) \oplus \of(1,0)$&\cite{corti} \\
  \evnrow \multicolumn{2}{l}{\rule{10pt}{0pt}\emph{alt.}} & $\Fl(1,2,4)$& $\of(0,1) \oplus \of(1,1)$& \\
   \hyperlink{Fano2--18}{2--18} & $(15,24,2)$& $\PP^1 \times \PP^2 \times \PP^6$& $\Lambda(0,0,1) \oplus \of(0,0,2)$&$\Lambda \in \Ext^1_{\PP^1 \times \PP^2}(\mQ_{\PP^2}^{\oplus 2}, \of(1,-1))$ \\
   \evnrow \hyperlink{Fano2--19}{2--19} & $(16,26,2)$& $\PP^3 \times \PP^5$& $\mQ_{\PP^3}(0,1) \oplus \of(1,1)^{\oplus 2}$& \\
   \evnrow \multicolumn{2}{l}{\rule{10pt}{0pt}\emph{alt.}} & $\Fl(1,3,6)$& $\mQ_2^{\oplus 2} \oplus \of(1,1)^{\oplus 2}$& \\
  2--20 & $(16,26,0)$& $\Gr(2,5) \times \PP^2$& $\mU^{\vee}_{\Gr(2,5)}(0,1) \oplus \of(1,0)^{\oplus 3}$&\cite{corti} \\
  \evnrow 2--21 & $(17,28,0)$& $\Gr(2,4) \times \PP^4$& $\mU^{\vee}_{\Gr(2,4)}(0,1)^{\oplus 2} \oplus \of(1,0)$&\cite{corti} \\
 \hyperlink{Fano2--22}{2--22} & $(18,30,0)$& $\PP^3 \times \Gr(2,5)$& $\mQ_{\Gr(2,5)}(1,0) \oplus \of(0,1)^{\oplus 3}$& \\
  \multicolumn{2}{l}{\rule{10pt}{0pt}\emph{alt.}} & $\Fl(1,2,5)$& $\of(1,0) \oplus \of(0,1)^{\oplus 3}$&\cite{corti}  \\
  \evnrow \hyperlink{Fano2--23}{2--23} & $(18,30,1)$& $\PP^4 \times \PP^5$& $\mQ_{\PP^4}(0,1) \oplus \of(2,0) \oplus \of(1,1)$& \\*
  \evnrow \multicolumn{2}{l}{\rule{10pt}{0pt}\emph{alt.}} & $\Fl(1,2,6)$& $\mQ_2 \oplus \of(0,2) \oplus \of(1,1)$& \\
  2--24 & $(18,30,0)$& $\PP^2 \times \PP^2$& $\of(1,2)$& \cite{isp5} \\ 
  \evnrow 2--25 & $(19,32,1)$& $\PP^1 \times \PP^3$& $\of(1,2)$&\cite{corti} \\
  \hyperlink{Fano2--26}{2--26} & $(20,34,0)$& $\Gr(2,4) \times \Gr(2,5)$& $\mQ_{\Gr(2,4)} \boxtimes \mU_{\Gr(2,5)}^{\vee} \oplus \of(1,0) \oplus \of(0,1)^{\oplus 2}$& \\
   \multicolumn{2}{l}{\rule{10pt}{0pt}\emph{alt.}} & $\Fl(2,3,5)$& $\mU_1^{\vee} \oplus \of(1,0) \oplus \of(0,1)^{\oplus 2}$& \\
  \evnrow 2--27 & $(22,38,0)$& $\PP^3 \times \PP^2$& $\of(1,1)^{\oplus 2}$&\cite{corti} \\
   \hyperlink{Fano2--28}{2--28} & $(23,40,1)$& $\PP^3 \times \PP^{10}$& $\Lambda(0,1) \oplus \of(1,1)$&$\Lambda \in \Ext^1_{\PP^3}(\Sym^2\mQ, \mQ(-1))$ \\
   \evnrow \hyperlink{Fano2--29}{2--29} & $(23,40,0)$& $\PP^1 \times \PP^4$& $\of(0,2) \oplus \of(1,1)$& \\
   \hyperlink{Fano2--30}{2--30} & $(26,46,0)$& $\PP^3 \times \PP^4$& $\mQ_{\PP^3}(0,1) \oplus \of(1,1)$& \\
   \multicolumn{2}{l}{\rule{10pt}{0pt}\emph{alt.}} & $\Fl(1,2,5)$& $\mQ_2 \oplus \of(1,1)$& \\
   \evnrow \hyperlink{Fano2--31}{2--31} & $(26,46,0)$& $\PP^2 \times \Gr(2,4)$& $\mU^{\vee}_{\Gr(2,4)}(1,0) \oplus \of(0,1)$& \\
  2--32 & $(27,48,0)$& $\PP^2 \times \PP^2$& $\of(1,1)$&\cite{isp5} \\
 \multicolumn{2}{l}{\rule{10pt}{0pt}\emph{alt.}}& $\Fl(1,2,3)$& &\cite{isp5} \\
 \evnrow \hyperlink{Fano2--33}{2--33} & $(30,54,0)$& $\PP^1 \times \PP^3$& $\of(1,1)$& \\
 2--34 & $(30,54,0)$& $\PP^1 \times \PP^2$& &\cite{isp5} \\
 \evnrow \hyperlink{Fano2--35}{2--35} & $(31,56,0)$& $\PP^2 \times \PP^3$& $\mQ_{\PP^2}(0,1)$& \\
 \evnrow \multicolumn{2}{l}{\rule{10pt}{0pt}\emph{alt.}} & $\Fl(1,2,4)$& $\mQ_2$& \\
 \hyperlink{Fano2--36}{2--36} & $(34,62,0)$& $\PP^2 \times \PP^6$& $\Lambda(0,1)$&$\Lambda \in \Ext^1_{\PP^2}(\Sym^2\mQ, \mQ(-1))$ \\

 \evnrow \hyperlink{Fano3--1}{3--1} & $(9,12,8)$& $\PP^1 \times \PP^1 \times \PP^1 \times \PP^8$& $K(0,0,0,1) \oplus \of(0,0,0,2)$&$K \in \Ext^2_{(\PP^1)^3}(\of(0,0,1)^{\oplus 4},\of(1,-1,-1))$ \\
  \hyperlink{Fano3--2}{3--2} & $(10,14,3)$& $\PP^1 \times \PP^1 \times \PP^5$& $\Lambda(0,0,1) \oplus \of(0,1,2)$&$\Lambda \in \Ext^1_{(\PP^1)^2}(\of(0,1)^{\oplus 2}, \of(1, -1))$ \\
  \evnrow 3--3 & $(12,18,3)$& $\PP^1 \times \PP^1 \times \PP^2$& $\of(1,1,2)$&\cite{corti} \\
  \hyperlink{Fano3--4}{3--4} & $(12,18,2)$& $\PP^1 \times \PP^2 \times \PP^6 \times \PP^1$& $\Lambda(0,0,1,0) \oplus \of(0,0,2,0) \oplus \of(0,1,0,1)$&$\Lambda \in \Ext^1_{\PP^1 \times \PP^2}(\mQ_{\PP^2}^{\oplus 2}, \of(1,-1))$ \\
  \evnrow \hyperlink{Fano3--5}{3--5} & $(13,20,0)$& $\PP^1 \times \PP^2 \times \PP^7$& $\Lambda(0,0,1) \oplus \of(0,1,1)^{\oplus 2}$&$\Lambda \in \Ext^1_{\PP^1 \times \PP^2}(\mQ_{\PP^2}^{\oplus 2}, \of(1,-1))$ \\
  \hyperlink{Fano3--6}{3--6} & $(14,22,1)$& $\PP^1 \times \PP^1 \times \PP^3$& $\of(1,0,2) \oplus \of(0,1,1)$& \\
  \evnrow 3--7 & $(15,24,1)$& $\PP^1 \times \PP^2 \times \PP^2$& $\of(0,1,1) \oplus \of(1,1,1)$&\cite{corti} \\
  \hyperlink{Fano3--8}{3--8} & $(15,24,0)$& $\PP^1 \times \PP^2 \times \PP^2$& $\of(0,1,2) \oplus \of(1,1,0)$& \\
  \evnrow  & & & &\multicolumn{1}{c}{$\Lambda \in \Ext^1_{\PP^2}(\Sym^2\mQ, \mQ(-1)),$} \\
  \evnrow \multirow{-2}{*}{\hyperlink{Fano3--9}{3--9}}& \multirow{-2}{*}{$(16,26,3)$} & \multirow{-2}{*}{$\PP^2 \times \PP^6 \times \PP^{20}$} & \multirow{-2}{*}{$\Lambda(0,1,0) \oplus \mQ_{\PP^6}(0,0,1) \oplus K(0,0,1)$}& \multicolumn{1}{c}{$K \in \Ext^3_{\PP^2}(\Sym^4\mQ, \mQ(-3))$}\\
  \hyperlink{Fano3--10}{3--10} & $(16,26,0)$& $\PP^1 \times \PP^1 \times \PP^4$& $\of(1,0,1) \oplus \of(0,1,1) \oplus \of(0,0,2)$& \\
  \evnrow \hyperlink{Fano3--11}{3--11} & $(17,28,1)$& $\PP^1 \times \PP^2 \times \PP^3$& $\of(1,1,1) \oplus \mQ_{\PP^2}(0,0,1)$& \\
  \hyperlink{Fano3--12}{3--12} & $(17,28,0)$& $\PP^1 \times \PP^2 \times \PP^3$& $\of(0,1,1) \oplus \of(0,1,1) \oplus \of(1,0,1)$& \\
  \evnrow 3--13 & $(18,30,0)$& $(\PP^2)^3$& $\of(1,1,0) \oplus \of(1,0,1) \oplus \of(0,1,1)$&\cite{corti} \\
  \hyperlink{Fano3--14}{3--14} & $(19,32,1)$& $\PP^3 \times \PP^{10} \times \PP^2$& $\Lambda(0,1,0) \oplus \mQ_{\PP^2}(1,0,0) \oplus \of(1,1,0)$&$\Lambda \in \Ext^1_{\PP^3}(\Sym^2\mQ, \mQ(-1)) $ \\
  \evnrow \hyperlink{Fano3--15}{3--15} & $(19,32,0)$& $\PP^1 \times \PP^2 \times \PP^4$& $\of(1,0,1) \oplus \of(0,1,1) \oplus \mQ_{\PP^2}(0,0,1)$& \\
  \hyperlink{Fano3--16}{3--16} & $(20,34,0)$& $\PP^2_1 \times \PP^2_2 \times \PP^3$& $\of(0,1,1) \oplus \of(1,1,0) \oplus \mQ_{\PP^2_1}(0,0,1)$& \\
  \evnrow 3--17 & $(21,36,0)$& $\PP^1 \times \PP^1 \times \PP^2$& $\of(1,1,1)$&\cite{corti} \\
  \hyperlink{Fano3--18}{3--18} & $(21,36,0)$& $\PP^1\times \PP^3 \times \PP^4$& $\of(1,1,0) \oplus \of(0,1,1) \oplus  \mQ_{\PP^3}(0,0,1)$& \\
   \multicolumn{2}{l}{\rule{10pt}{0pt}\emph{alt.}} & $\PP^1 \times \Fl(1,2,5)$& $\mQ_2(0;0,0) \oplus \of(0;1,1) \oplus \of(1;0,1)$& \\
   \evnrow \hyperlink{Fano3--19}{3--19} & $(22,38,0)$& $\PP^2 \times \PP^4$& $\of(0,2) \oplus \mQ_{\PP^2}(0,1)$& \\
   \hyperlink{Fano3--20}{3--20} & $(22,38,0)$& $\PP^2_1 \times \PP^2_2 \times \PP^4$& $\of(1,1,0) \oplus \mQ_{\PP^2_1}(0,0,1) \oplus \mQ_{\PP^2_2}(0,0,1)$& \\
   \evnrow \hyperlink{Fano3--21}{3--21} & $(22,38,0)$& $
   \PP^1 \times \PP^2 \times \PP^6$& $\of(0,1,1) \oplus \Lambda(0,0,1)$&$\Lambda \in \Ext^1_{\PP^1 \times \PP^2}(\mQ_{\PP^2}^{\oplus 2}, \of(1,-1)) $ \\
   \hyperlink{Fano3--22}{3--22} & $(23,40,0)$& $\PP^1 \times \PP^2 \times \PP^6$& $\of(1,0,1) \oplus \Lambda(0,0,1)$&$\Lambda \in \Ext^1_{\PP^2}(\Sym^2 \mQ, \mQ(-1))$ \\
 \evnrow \hyperlink{Fano3--23}{3--23} & $(24,42,0)$& $\PP^2 \times \PP^3 \times \PP^4$& $\mQ_{\PP^2}(0,1,0) \oplus \of(1,0,1) \oplus \mQ_{\PP^3}(0,0,1)$& \\
 \hyperlink{Fano3--24}{3--24} & $(24,42,0)$& $\PP^1 \times \PP^2 \times \PP^2$& $\of(1,1,0) \oplus \of(0,1,1)$&\cite{corti} \\
\evnrow \hyperlink{Fano3--25}{3--25} & $(25,44,0)$& $\PP^1 \times \PP^1 \times \PP^3$& $\of(1,0,1) \oplus \of(0,1,1)$& \\
\evnrow \multicolumn{2}{l}{\rule{10pt}{0pt}\emph{alt.}} & $\Fl(1,2,4)$& $\of(0,1)^{\oplus 2}$& \\
\hyperlink{Fano3--26}{3--26} & $(26,46,0)$& $\PP^1 \times \PP^2 \times \PP^3$& $\of(1,0,1) \oplus \mQ_{\PP^2}(0,0,1)$& \\
\evnrow 3--27 & $(27,48,0)$& $\PP^1 \times \PP^1 \times \PP^1$& &\cite{isp5} \\
\hyperlink{Fano3--28}{3--28} & $(27,48,0)$& $\PP^1 \times \PP^1 \times \PP^2$& $\of(1,0,1)$& \\
\evnrow & & & \multicolumn{1}{c}{$\mQ_{\PP^2}(1,0,0) \oplus \mQ_{\PP^3}(0,0,1) \oplus$} &
\\
\evnrow \multirow{-2}{*}{\hyperlink{Fano3--29}{3--29}} & \multirow{-2}{*}{$(28,50,0)$}& \multirow{-2}{*}{$\PP^3 \times \PP^2 \times \PP^9$}& \multicolumn{1}{c}{$\oplus \Lambda(0,0,1) \oplus  \of(0,-1,1)$}&\multirow{-2}{*}{$\Lambda \in \Ext^1_{\PP^2}(\Sym^2\mQ, \mQ(-1))$} \\
\hyperlink{Fano3--30}{3--30} & $(28,50,0)$& $\PP^1 \times \PP^2 \times \PP^3$& $\of(1,1,0) \oplus \mQ_{\PP^2}(0,0,1)$& \\
\evnrow \hyperlink{Fano3--31}{3--31} & $(29,52,0)$& $\PP^3 \times \PP^4$& $\mQ_{\PP^3}(0,1) \oplus \of(2,0)$& \\
\evnrow \multicolumn{2}{l}{\rule{10pt}{0pt}\emph{alt.}} & $\Fl(1,2,5)$& $\mQ_2 \oplus \of(0,2)$& \\
4--1 & $(15,24,1)$& $(\PP^1)^4$& $\of(1,1,1,1)$&\cite{isp5} \\
\evnrow  & &  & $\mQ_{\PP^3}(0,1,0) \oplus \mQ_{\PP^4}(0,0,1) \oplus $ &  \\
\evnrow \multirow{-2}{*}{\hyperlink{Fano4--2}{4--2}} & \multirow{-2}{*}{$(17,28,1)$}& \multirow{-2}{*}{$\PP^3 \times \PP^4 \times \PP^5$}& $\oplus \of(2,0,0) \oplus \of(0,1,1) $ &\\

 \hyperlink{Fano4--3}{4--3} & $(18,30,0)$& $(\PP^1)^3 \times \PP^2$& $\of(1,1,0,1) \oplus \of(0,0,1,1)$& \\
 
 \evnrow \hyperlink{Fano4--4}{4--4} & $(19,32,0)$& $\PP^1 \times \PP^2 \times \PP^4$& $\of(1,1,0) \oplus \of(0,0,2) \oplus \mQ_{\PP^2}(0,0,1)$& \\
 \hyperlink{Fano4--5}{4--5} & $(19,32,0)$& $\PP^1 \times \PP^2 \times \PP^6 \times \PP^1$& $\of(0,1,1,0) \oplus \Lambda(0,0,1,0) \oplus \of(0,1,0,1)$&$\Lambda \in \Ext^1_{\PP^1 \times \PP^2}(\mQ_{\PP^2}^{\oplus 2},\of(1,-1))$ \\
 \evnrow \hyperlink{Fano4--6}{4--6} & $(20,34,0)$& $(\PP^1)^3 \times \PP^3$& $\of(1,0,0,1) \oplus \of(0,1,0,1) \oplus \of(0,0,1,1)$& \\
 \hyperlink{Fano4--7}{4--7} & $(21,36,0)$& $(\PP^1)^2 \times (\PP^2)^2$& $\of(0,0,1,1) \oplus \of(1,0,1,0) \oplus \of(0,1,0,1)$&\\
 \multicolumn{2}{l}{\rule{10pt}{0pt}\emph{alt.}} & $ (\PP^1)^2  \times \Fl(1,2,3)$& $\of(1,0; 1,0) \oplus \of(0,1;0,1)$& \\
 \evnrow \hyperlink{Fano4--8}{4--8} & $(22,38,0)$& $\PP^1 \times \PP^3 \times \PP^4$& $\of(1,0,1) \oplus \of(0,2,0) \oplus \mQ_{\PP^3}(0,0,1)$& \\
 \evnrow \multicolumn{2}{l}{\rule{10pt}{0pt}\emph{alt.}} & $\PP^1 \times \Fl(1,2,5)$& $\of(1;1,0) \oplus \of(0;0,2) \oplus \mQ_2$& \\
 \hyperlink{Fano4--9}{4--9} & $(23,40,0)$& $(\PP^1)^2 \times \PP^2 \times \PP^3$& $\of(1,0,1,0) \oplus \of(0,1,0,1) \oplus \mQ_{\PP^2}(0,0,0,1)$& \\
 \evnrow \hyperlink{Fano4--10}{4--10} & $(24,42,0)$& $(\PP^1)^3 \times \PP^2$& $\of(1,0,0,1) \oplus \of(0,1,0,1)$& \\
 \evnrow \multicolumn{2}{l}{\rule{10pt}{0pt}\emph{alt.}} & $\PP^1\times \PP^2 \times \PP^3$& $\of(0,1,1) \oplus \mQ_{\PP^2}(0,0,1)$& \\

  & & & \multicolumn{1}{c}{$\of(0,1,1,0) \oplus \mQ_{\PP^2}(0,0,0,1) \oplus$}\\
  \multirow{-2}{*}{\hyperlink{Fano4--11}{4--11}}
  & \multirow{-2}{*}{$(25,44,0)$}& \multirow{-2}{*}{$\PP^1 \times \PP^2 \times \PP^1 \times \PP^6$}& \multicolumn{1}{c}{$\oplus \Lambda(0,0,0,1) \oplus \of(0,0,-1,1)$}&\multirow{-2}{*}{$\Lambda \in \Ext^1_{(\PP^1)^2}(\of(0,1)^{\oplus 2}, \of(1, -1))$} \\
  
  \evnrow \hyperlink{Fano4--12}{4--12} & $(26,46,0)$& $ \PP^1 \times \PP^3 \times \PP^{8}$& $\of(1,1,0) \oplus \Lambda(0,0,1) \oplus \of(-1,1,1)$&$\Lambda \in \Ext^1_{\PP^1 \times \PP^3}(\mQ_{\PP^3}^{\oplus 2},\of(1,-1))$ \\ 
  \hyperlink{Fano4--13}{4--13} & $(16,26,0)$& $ \PP^1_1 \times \PP^1_2 \times \PP^1_3 \times \PP^4$& $\Lambda(0,0,0,1) \oplus \of(1,0,1,1) $&$\Lambda \in \Ext^1_{\PP^1_1 \times \PP^1_2}(\of(0,1)^{\oplus 2}, \of(1, -1))$ \\
  \evnrow  & & & $\of(1,1,0,0) \oplus \Lambda(0,0,1,0) \oplus \of(-1,1,1,0) \oplus $& \\
  \evnrow \multirow{-2}{*}{\hyperlink{Fano5--1}{5--1}} & \multirow{-2}{*}{$(17,28,0)$}& \multirow{-2}{*}{$\PP^1 \times \PP^3 \times \PP^{8} \times \PP^{11}$}& $\oplus \mQ_{\PP^3}(0,0,0,1) \oplus \mQ_{\PP^8}(0,0,0,1)$& \multirow{-2}{*}{$\Lambda \in \Ext^1_{\PP^1 \times \PP^3}(\mQ_{\PP^3}^{\oplus 2},\of(1,-1))$}\\

& & & $\of(1,1,0,0) \oplus \Lambda(0,0,1,0) \oplus$
\\
\multirow{-2}{*}{\hyperlink{Fano5--2}{5--2}} & \multirow{-2}{*}{$(21,36,0)$}& \multirow{-2}{*}{$\PP^1 \times \PP^3 \times \PP^{8} \times \PP^1$}& $\oplus \of(-1,1,1,0) \oplus \of(0,1,0,1)$&\multirow{-2}{*}{$\Lambda \in \Ext^1_{\PP^1 \times \PP^3}(\mQ_{\PP^3}^{\oplus 2},\of(1,-1))$} \\
\evnrow 5--3 & $(21,36,0)$& $\PP^2 \times \PP^2 \times \PP^1$& $\of(1,1,0)^{\oplus 2}$& \cite{isp5} \\
\evnrow  \multicolumn{2}{l}{\rule{10pt}{0pt}\emph{alt.}} & $(\PP^1)^4$& $\of(1,1,1,0)$& \cite{isp5} \\
6--1 & $(18,30,0)$& $\Gr(2,5) \times \PP^1$& $\of(1,0)^{\oplus 4}$&\cite{isp5} \\
\evnrow 7--1 & $(15,24,0)$& $\PP^4 \times \PP^1$& $\of(2,0)^{\oplus 2}$&\cite{isp5} \\
8--1 & $(12,18,0)$& $\PP^3 \times \PP^1$& $\of(3,0)$&\cite{isp5} \\
\evnrow 9--1 & $(9,12,0)$& $\PP(1^3,2) \times \PP^1$& $\of(4,0)$&\cite{isp5} \\
\evnrow \multicolumn{2}{l}{\rule{10pt}{0pt}\emph{alt.}} & $\PP^1 \times \PP^2 \times \PP^1$& $\of(2,2,0)$&\cite{isp5} \\
10--1 & $(6,10,0)$& $\PP(1^2,2,3) \times \PP^1$& $\of(6,0)$&\cite{isp5} \\
\end{longtable}
\end{scriptsize}
\end{centering}

\begin{longtable}{cccccrc}
\caption{Del Pezzo surfaces.}\label{tab:delpezzo}\\
\toprule
\multicolumn{1}{c}{DP}&\multicolumn{1}{c}{$K^2$}& \multicolumn{1}{c}{X} & \multicolumn{1}{c}{$\mathcal{F}$} \\
\cmidrule(lr){1-1}\cmidrule(lr){2-2}\cmidrule(lr){3-3} \cmidrule(lr){4-4} 
\endfirsthead
\multicolumn{5}{l}{\vspace{-0.25em}\scriptsize\emph{\tablename\ \thetable{} continued from previous page}}\\
\midrule
\endhead
\multicolumn{5}{r}{\scriptsize\emph{Continued on next page}}\\
\endfoot
\bottomrule
\endlastfoot
\evnrow 1--1 & $9$& $\PP^2$&  \\
 2--1 & $8$& $\PP^1 \times \PP^1$&  \\
\evnrow 2--2 & $8$& $\PP^1 \times \PP^2$& $\of(1,1) $\\

3--1 & $7$& $\PP^1 \times \PP^1 \times \PP^2$& $\of(1,0,1) \oplus \of(0,1,1)$\\
\evnrow 4--1 & $6$& $\PP^2 \times \PP^2$& $\of(1,1)^{\oplus 2}$\\
\evnrow  & & $(\PP^1)^3$& $\of(1,1,1)$\\
5--1 & $5$& $\Gr(2,5)$& $\of(1)^{\oplus 4}$\\
\evnrow 6--1 & $4$& $\PP^4 $& $\of(2)^{\oplus 2}$\\
7--1 & $3$& $\PP^3 $& $\of(3)$\\
\evnrow 8--1 & $2$& $\PP(1^3,2)$& $\of(4)$\\
\evnrow & & $\PP^1 \times \PP^2$& $\of(2,2)$\\
9--1 & $1$& $\PP(1^2,2,3)$& $\of(6)$\\

\end{longtable}

\frenchspacing


\newcommand{\etalchar}[1]{$^{#1}$}

\end{document}